\def\cqfd{\skip10=\parfillskip\parfillskip=0pt
\enspace\hfill\symbolecqfd\par\parfillskip=\skip10\par\medskip}
\def\symbolecqfd{\rlap{$\sqcap$}$\sqcup$}
\newtheorem{theorem}{Theorem}[section]
\newtheorem{proposition}[theorem]{Proposition}
\newtheorem{lemma}[theorem]{Lemma}
\newtheorem{corollary}[theorem]{Corollary}
\theoremstyle{remark}
\newtheorem{example}[theorem]{Example}
\newtheorem{remark}[theorem]{Remark}
\def\eop{\end{proof}}
\def\binom#1#2{{#1\choose#2}}
\def\inter[#1]{[\![#1]\!]}
\def\inv{^{-1}}
\def \N {\mathbb{N}}
\def \Z {\mathbb{Z}}
\def \P {\mathbb{P}}
\def \R {\mathbb{R}}
\def \O {\mathcal{O}}
\def \C {\mathcal{C}}
\def \A {\mathcal{A}}
\def \MM {\mathbb{M}}
\def \calA {\mathcal{A}}
\def \calC {\mathcal{C}}
\let\phi\varphi
\let\epsilon\varepsilon
\def\pref{\textsf{pref}}
\def\suff{\textsf{suff}}
\def\coincidence{\alpha_{[2]}}
\newcommand{\proba}[1]{\mathbb{P}\left(#1\right)}
\DeclareMathOperator{\Min}{\textsf{min}}
\DeclareMathOperator{\Lcp}{\textsf{lcp}}
\def\calG{\mathcal{G}}
\def\Red{\mathcal{R}}
\def\PP{\mathbb{P}}
\def\RR{\mathbb{R}}
\def\TT{\mathbb{T}}
\def\ZZ{\mathbb{Z}}
\def\Pref{\mathcal{P}}
\def\tuple{\mathcal{T}}
\def\tupleW{\mathcal{TW}}
\DeclareMathOperator{\Max}{\textsf{max}}
\DeclareMathOperator{\Nbr}{\textsf{size}}
\def\gothicp{\mathfrak{p}}
\begin{document}

\title[Generic properties of subgroups and presentations]{Generic properties of subgroups of free groups and finite presentations}
\thanks{The authors acknowledge partial support from ANR projects \textsc{ANR 2010 Blan 0202\_01 Frec}, \textsc{ANR 2012 JCJC JS02-012-0 MealyM} and \textsc{ANR 2010 Blan 0204\_07 Magnum}, as well as from ERC grant \textsc{PCG-336983} and the Programme IdEx Bordeaux - CPU (ANR-10-IDEX-03-02).}

%

 \author[F. Bassino]{Fr\'ed\'erique Bassino}
 \address{Universit\'e Paris 13, Sorbonne Paris Cit\'e, LIPN, CNRS UMR 7030, F-93430 Villetaneuse, France}
 \email{bassino@lipn.univ-paris13.fr}
\author[C. Nicaud]{Cyril Nicaud}
\address{Universit\'e Paris-Est, LIGM (UMR 8049), UPEMLV, F-77454 Marne-la-Vall\'ee, France}
\email{nicaud@univ-mlv.fr}
\author[P. Weil]{Pascal Weil}
\address{Univ. Bordeaux, LaBRI, CNRS UMR 5800, F-33400 Talence, France}
\email{pascal.weil@labri.fr}

\subjclass{Primary 20E05, 60J10 ; Secondary 20E07, 05A16, 68Q17}
\date{\today}

\keywords{Asymptotic properties, generic properties, random subgroups, random presentations, Markovian automata, malnormality, small cancellation}

\begin{abstract}
Asymptotic properties of finitely generated subgroups of free groups, and of finite group presentations, can be considered in several fashions, depending on the way these objects are represented and on the distribution assumed on these representations: here we assume that they are represented by tuples of reduced words (generators of a subgroup) or of cyclically reduced words (relators). Classical models consider fixed size tuples of words (e.g. the few-generator model) or exponential size tuples (e.g. Gromov's density model), and they usually consider that equal length words are equally likely. We generalize both the few-generator and the density models with probabilistic schemes that also allow variability in the size of tuples and non-uniform distributions on words of a given length.

Our first results rely on a relatively mild prefix-heaviness hypothesis on the distributions, which states essentially that the probability of a word decreases exponentially fast as its length grows. Under this hypothesis, we generalize several classical results: exponentially generically a randomly chosen tuple is a basis of the subgroup it generates, this subgroup is malnormal and the tuple satisfies a small cancellation property, even for exponential size tuples. In the special case of the uniform distribution on words of a given length, we give a phase transition theorem for the central tree property, a combinatorial property closely linked to the fact that a tuple freely generates a subgroup.  We then further refine our results when the distribution is specified by a Markovian scheme, and in particular we give a phase transition theorem which generalizes the classical results on the densities up to which a tuple of cyclically reduced words chosen uniformly at random exponentially generically satisfies a small cancellation property, and beyond which it presents a trivial group.
\end{abstract}
    
\maketitle

This paper is part of the growing body of literature on asymptotic properties of subgroups of free groups and of finite group presentations, which goes back at least to the work of Gromov \cite{1987:Gromov} and Arzhantseva and Ol'shanskii \cite{1996:ArzhantsevaOlshanskii}. As in much of the recent literature, the accent is on so-called generic properties, that is, properties whose probability tends to 1 when the size of instances grows to infinity. A theory of genericity and its applications to complexity theory was initiated by Kapovich, Myasnikov, Schupp and Shpilrain \cite{2003:KapovichMiasnikovSchupp}, and developed in a number of papers, see Kapovich for a recent discussion \cite{2015:Kapovich}.

Genericity, and more generally asymptotic properties, depends on the fashion in which input is represented: finitely presented groups are usually given by finite presentations, i.e. tuples of cyclically reduced words; finitely generated subgroups of free groups can be represented by tuples of words (generators) or Stallings graphs. The representation by Stallings graphs is investigated by the authors, along with Martino and Ventura in \cite{2008:BassinoNicaudWeil,2013:BassinoMartinoNicaud,2015:BassinoNicaudWeil} but we will not discuss it in this paper: we are dealing, like most of the literature, with tuples of words.

There are, classically, two main models (see Section~\ref{sec: two classical models}): the few words model, where an integer $k$ is fixed and one considers $k$-tuples of words of length at most $n$, when $n$ tends to infinity, see e.g. \cite{1996:ArzhantsevaOlshanskii,2002:Jitsukawa,2013:BassinoMartinoNicaud,2015:BassinoNicaudWeil}; and the density model, where we consider tuples of cyclically reduced words of length $n$, whose size grows exponentially with $n$, see e.g. \cite{1987:Gromov,1992:Olshanskii,1995:Champetier,2004:Ollivier}.

Typical properties investigated include the following (see in particular Sections~\ref{sec: subgroups and presentations} and~\ref{sec: graphical representation}): whether a random tuple $\vec h$ freely generates the subgroup $H = \langle \vec h\rangle$ \cite{1996:ArzhantsevaOlshanskii,2002:Jitsukawa}, whether $H$ is malnormal \cite{2002:Jitsukawa,2013:BassinoMartinoNicaud} or Whitehead minimal \cite{2007:RoigVenturaWeil,2015:BassinoNicaudWeil}, whether the finite presentation with relators $\vec h$ has a small cancellation property, or whether the group it presents is infinite or trivial \cite{2004:Ollivier}.

All these models implicitly assume the uniform distribution on the set of reduced words of equal length (Ollivier also considers non-uniform distributions in \cite{2004:Ollivier}).

We introduce (Section~\ref{sec: proba model}) a model for probability distributions on tuples of reduced words that is sufficiently general to extend the few words model and Gromov's density model mentioned above, and to leave space for non uniform distributions. Like these two models, ours assumes that a tuple $\vec h$ of words is generated by independently drawing words of given lengths, but it also handles independently the size of $\vec h$ and the lengths of the words in $\vec h$.

Our first set of results assumes a \emph{prefix-heaviness} hypothesis on the probability distribution on words: the probability of drawing a word decreases exponentially fast as its length grows (precise definitions are given in Section~\ref{sec: proba model}). It is a natural hypothesis if we imagine that our probabilistic source generates words one letter at a time, from left to right. This relatively mild hypothesis suffices to obtain general results on the exponential genericity of a certain geometric property of the Stallings graph of the subgroup $H$ generated by a randomly chosen tuple $\vec h$ (the \emph{central tree property}, implicitly considered in \cite{1996:ArzhantsevaOlshanskii,2002:Jitsukawa} and explicitly in \cite{2015:BassinoNicaudWeil}), of the fact that $\vec h$ freely generates $H$, and of the malnormality of $H$, see Section~\ref{sec: general statements}. 

In Section~\ref{sec: applications to uniform}, we apply these general results to the uniform distribution and generalize known results in two directions. Firstly we consider random exponential size tuples, for which we give a phase transition theorem for the central tree property: it holds exponentially generically up to density $\frac14$, and fails exponentially generically at densities greater than $\frac14$ (Proposition~\ref{prop: uniform ctp at density}). In particular, a random tuple is exponentially generically a basis of the subgroup it generates up to density $\frac14$, but we cannot say anything of that property at higher densities.

We also extend Jitsukawa's result on malnormality \cite{2002:Jitsukawa}, from fixed size to exponential size tuples under uniform distribution up to density $\frac1{16}$ (Proposition~\ref{prop: uniform malnormal at density}). In view of the methods used to establish this result, it is likely that the value $\frac1{16}$ is not optimal.

Secondly, we show that the height of the central tree of a random fixed size tupe $\vec h$, which measures the amount of initial cancellation between the elements of $\vec h$ and $\vec h\inv$, is generically less than any prescribed unbounded non-decreasing function (Proposition~\ref{prop: very small central trees}). Earlier results only showed that this height was exponentially generically bounded by any linear function.

We then introduce \emph{Markovian automata}, a probabilistic automata-theoretic model, to define explicit instances of prefix-heavy distributions (Section~\ref{sec: automata probabilities}). Additional assumptions like irreducibility or ergodicity lead to the computation of precise bounds for the parameters of prefix-heaviness. In particular, we prove a phase transition theorem for ergodic Markovian automata (Section~\ref{sec: phase transitions}), showing that small cancellation properties generically hold up to a certain density, and generically do not hold at higher densities. More precisely, if $\coincidence$ is the coincidence probability of the Markovian automaton, Property $C'(\lambda)$ holds exponentially generically at $\coincidence$-density less than $\frac\lambda2$ (that is: for random tuples of size $\coincidence^{-dn}$ for some $d < \frac\lambda2$), and fails exponentially generically at $\coincidence$-densities greater than $\frac\lambda2$. We also show that at $\coincidence$-densities greater than $\frac12$, a random tuple of cyclically reduced words generically presents a degenerate group (see Proposition~\ref{prop: degenerate subgroups} for a precise definition). These results generalize the classical results on uniform distribution in Ollivier \cite{2004:Ollivier,2005:Ollivier}. It remains to be seen whether our methods can be applied to fill the gap, say, between $\coincidence$-density $\frac1{12}$ and $\frac12$, where small cancellation property $C'(\frac16)$ generically does not hold yet the presented group might be hyperbolic, see \cite{2004:Ollivier,2005:Ollivier}.

Some of the definitions in this paper, notably that of Markovian automata, were introduced by the authors in \cite{2012:BassinoNicaudWeil}, and some of the results were announced there as well. The results in the present paper are more precise, and subsume those of \cite{2012:BassinoNicaudWeil}.

\section{Free groups, subgroups and presentations}

In this section, we set the notation and basic definitions of the properties of subgroups of free groups and finite presentations which we will consider.

\subsection{Free groups and reduced words}\label{sec: free groups}

Let $A$ be a finite non-empty set, which will remain fixed throughout the paper, with $|A| = r$, and let $\tilde A$ be the symmetrized alphabet, namely the disjoint union of $A$ and a set of formal inverses $A\inv = \{a\inv\in A \mid a\in A\}$.  By convention, the formal inverse operation is extended to $\tilde A$ by letting $(a\inv)\inv=a$ for
each $a \in A$.  A word in $\tilde A^*$ (that is: a word written on the alphabet $\tilde A$) is
\emph{reduced} if it does not contain length 2 factors of the form $aa\inv$ ($a\in \tilde A$).  If a word is not reduced, one can \emph{reduce} it by iteratively deleting every
factor of the form $aa\inv$.  The resulting reduced word is uniquely
determined: it does not depend on the order of the cancellations.  For
instance, $u=aabb\inv a\inv$ reduces to $aa a\inv$, and thence to $a$.

The set $F$ of reduced words is naturally equipped with a group structure, where the product $u\cdot v$ is the (reduced) word obtained by
reducing the concatenation $uv$.  This group is called the
\emph{free group} on $A$.  More generally, every group isomorphic to
$F$, say, $G = \phi(F)$ where $\phi$ is an isomorphism, is said to be
a free group, \emph{freely generated by} $\phi(A)$.  The set $\phi(A)$ is
called a \emph{basis} of $G$.  Note that if $r\ge 2$, then $F$ has
infinitely many bases: if, for instance, $a\ne b$ are elements of $A$, then replacing $a$ by $b^nab^m$ (for some integers $n,m$) yields a basis.  The \emph{rank} of $F$ (or
of any isomorphic free group) is the cardinality $|A|$ of $A$, and one
shows that this notion is well-defined in the following sense: every basis of $F$ has the same cardinality.

Let $x, y$ be elements of a group $G$. We say that $y$ is a \emph{conjugate} of 
$x$ if there exists an element $g\in G$ such that $y = g\inv xg$, which we write $y = x^g$. The notation is extended to subsets of $G$: if $H \subseteq G$, then $H^g = \{x^g \mid x\in H\}$. Conjugacy of elements of the free group $F$ is characterized as follows. Say that a word $u$ is  \emph{cyclically reduced word} if it is non-empty, reduced and its first and last letters are not mutually inverse (or equivalently, if $u^2$ is non-empty and reduced). For instance, $ab\inv a\inv bbb$ is cyclically reduced, but $ab\inv a\inv bba\inv$ is not.

For every reduced word $u$, let $\kappa(u)$ denote its \emph{cyclic
reduction}, which is the shortest word $v$ such that $u=wvw^{-1}$ for some word $w$. For instance, $\kappa(ab\inv a\inv bba\inv) = a\inv b$. It is easily verified that two reduced words $u$ and $v$ are conjugates if and only if $\kappa(u)$ and $\kappa(v)$ are \emph{cyclic conjugates} (that is: there exist words $x$ and $y$ such that $\kappa(u) = xy$ and $\kappa(v) = yx$).

Let $\Red_{n}$ (resp. $\calC_n$) denote the set of all reduced (resp. cyclically reduced) words of length $n\geq 1$, and let $\Red = \bigcup_{n\geq 1}\Red_{n}$ and $\calC = \bigcup_{n\geq 1}\calC_{n}$ be the set of all reduced words, and all cyclically reduced words, respectively.

Every word of length 1 is cyclically reduced, so $|\Red_1| = |\calC_1| = 2r$. A reduced word of length $n\ge 2$ is of the form $ua$, where $u$ is reduced and  $a$ is not the inverse of the last letter of $u$. An easy induction shows that there are $|\Red_{n}|=2r(2r-1)^{n-1} = \frac{2r}{2r-1}(2r-1)^n$ reduced words of length $n\geq 2$.

Similarly, if $n\ge 2$, then $\calC_n$ is the set of words of the form $ua$, where $u$ is a reduced word and $a\in\tilde A$ is neither the inverse of the first letter of $u$, nor the inverse of its last letter: for a given $u$, there are either $2r-1$ or $2r-2$ such words, depending whether the first and last letter of $u$ are equal. In particular, the number of words in $\calC_n$ satisfies $\frac{2r}{2r-1}(2r-1)^{n-1}(2r-2) \le |\calC_n| \le \frac{2r}{2r-1}(2r-1)^n$, and in particular, $|\calC_n| = \Theta((2r-1)^n)$.

\subsection{Subgroups and presentations}\label{sec: subgroups and presentations}

Given a tuple $\vec h = (h_1,\ldots,h_k)$ of elements of $F$, let $\vec h^\pm = (h_1,h_1\inv,\ldots,h_k,h_k\inv)$ and let $\langle\vec h\rangle$ denote the subgroup of $F$ generated by the elements of $\vec h$, that is, the set of all the elements of $F$ which can be written as a product of elements of $\vec h^\pm$. It is a classical result of Nielsen that every such subgroup is free \cite{1918:Nielsen}.

An important property of subgroups is malnormality, which is related to geometric considerations (\textit{e.g.} \cite{1998:GitikMitraRips,1998:KharlampovichMyasnikov}): a subgroup $H$ of a group $G$ is \emph{malnormal} if $H \cap H^x$ is trivial for every $x\not\in H$. It is decidable whether a finitely generated subgroup $\langle\vec h\rangle$ is malnormal (\cite{2002:Jitsukawa,2002:KapovichMyasnikov}, see Section~\ref{sec: graphical representation}), whereas malnormality is not decidable in general hyperbolic groups \cite{2001:BridsonWise}.

A tuple $\vec h$ of elements of $F(A)$ can also be considered as a set of relators in a group presentation. More precisely, we denote by $\langle A \mid \vec h\rangle$ the group with generator set $A$ and relators the elements of $\vec h$, namely the quotient of $F(A)$ by the normal subgroup generated by $\vec h$. It is customary to consider such a group presentation only when $\vec h$ consists only of cyclically reduced words, since $\langle A \mid \vec h\rangle = \langle A \mid \kappa(\vec h)\rangle$.

The small cancellation property is a combinatorial property of a group presentation, with far-reaching consequences on the quotient group. Let $\vec h$ be a tuple of cyclically reduced words. A \emph{piece} in $\vec h$ is a word $u$ with at least two occurrences as a prefix of a cyclic conjugate of a word in $\vec h^\pm$. Let $0 < \lambda < 1$. The tuple $\vec h$ (or the group presentation $\langle A \mid \vec h\rangle$) has the \emph{small cancellation property $C'(\lambda)$} if whenever a piece $u$ occurs as a prefix of a cyclic conjugate $w$ of a word in $\vec h^\pm$, then $|u| < \lambda|w|$.

The following properties are well-known. We do not give the definition of the group-theoretic properties in this statement and refer the reader to \cite{1977:LyndonSchupp} or to the comprehensive survey \cite{2005:Ollivier}. 

\begin{proposition}\label{prop: small cancellation properties}
If $\vec h$ is a tuple of cyclically reduced words satisfying $C'(\frac16)$, then $G = \langle A\mid \vec h\rangle$ is infinite, torsion-free and word-hyperbolic. In addition, it has solvable word problem (by Dehn's algorithm) and solvable conjugacy problem. 

Moreover, if $\vec h$ and $\vec g$ both have property $C'(\frac16)$ and if they present the same group, then $\vec h^\pm = \vec g^\pm$ up to the order of the elements in the tuples.
\end{proposition}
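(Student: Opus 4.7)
The plan is to invoke classical small cancellation theory, treating the proposition as a synthesis of standard results rather than as new material. The central technical tool is Greendlinger's lemma: under hypothesis $C'(\frac16)$, every non-trivial reduced van Kampen diagram $D$ over $\langle A\mid\vec h\rangle$ contains a boundary 2-cell $R$ meeting $\partial D$ in more than half of its own perimeter. I would cite this from Lyndon--Schupp \cite{1977:LyndonSchupp}, Chapter V, and use it as the single entry point to all the group-theoretic conclusions.

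From Greendlinger's lemma the derivations are routine. Dehn's algorithm for the word problem is immediate: any non-empty reduced word representing the identity must contain a subword exceeding $\frac12$ of some cyclic conjugate of a relator in $\vec h^\pm$, so it can be replaced by a strictly shorter word, and iteration terminates at the empty word. The linear isoperimetric inequality implicit in Greendlinger's lemma gives word-hyperbolicity via Gromov's characterization, and hyperbolicity in turn yields solvability of the conjugacy problem (alternatively, one applies a conjugacy version of Dehn's algorithm, also in Lyndon--Schupp). Infinity of $G$ follows because Dehn's algorithm cannot shorten, for example, a single generator $a$, so $a\neq 1$ in $G$, and more generally one can exhibit cyclically reduced words of unbounded length containing no $\frac16$-fragment of any relator. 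Torsion-freeness is proved by assuming a putative torsion element $w$ of minimal length with $w^n=1$ for some $n\geq 2$, considering a reduced van Kampen diagram for $w^n=1$, and showing that Greendlinger's lemma forces a strictly shorter torsion element, contradicting minimality; this is the standard argument in Lyndon--Schupp, Chapter V.

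For the uniqueness assertion, the plan is to invoke the classical theorem (due to Greendlinger, later refined by Magnus--Karrass--Solitar) which characterizes the relators of a $C'(\frac16)$-presentation intrinsically from the group: the cyclic conjugates of elements of $\vec h^\pm$ are precisely the cyclically reduced words admitting a specific combinatorial description in terms of minimal van Kampen diagrams over $G$. Applying this characterization simultaneously to $\vec h$ and $\vec g$ yields the equality $\vec h^\pm=\vec g^\pm$ up to the order of the entries.

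The main obstacle is essentially expository rather than mathematical: none of the ingredients are new, and each of the five conclusions (infinity, torsion-freeness, word-hyperbolicity, word problem, conjugacy problem) as well as the uniqueness statement is treated in detail in Chapter V of Lyndon--Schupp \cite{1977:LyndonSchupp} and summarized in Ollivier's survey \cite{2005:Ollivier}. Since the proposition is used in this paper only as motivation for the probabilistic small cancellation results later on, my recommendation is to give no proof at all and simply refer the reader to these two sources.
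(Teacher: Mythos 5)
Your proposal matches the paper's approach exactly: the paper states this proposition without proof, explicitly deferring to Lyndon--Schupp \cite{1977:LyndonSchupp} and Ollivier's survey \cite{2005:Ollivier}, which is precisely the recommendation you arrive at. Your intermediate sketch via Greendlinger's lemma correctly identifies the standard underlying arguments for each of the claims (Dehn's algorithm, linear isoperimetric inequality and hyperbolicity, torsion-freeness by a minimal-counterexample diagram argument, and rigidity of the relator set), so the mathematics behind the citation is sound as well.
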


\subsection{Graphical representation of subgroups and the central tree property}\label{sec: graphical representation}

A privileged tool for the study of subgroups of free groups is provided by \emph{Stallings graphs}: if $H$ is a finitely generated subgroup of $F$, its Stallings graph $\Gamma(H)$ is a finite graph of a particular type, uniquely representing $H$, whose computation was first made explicit
by Stallings \cite{1983:Stallings}. The mathematical object itself is
already described by Serre \cite{1980:Serre}. The description we give
below differs slightly from Serre's and Stallings', it follows
\cite{2000:Weil,2002:KapovichMyasnikov,2006:Touikan,2007:MiasnikovVenturaWeil,2008:SilvaWeil}
and it emphasizes the combinatorial, graph-theoretical aspect, which is
more conducive to the discussion of algorithmic properties.

A \textit{finite $A$-graph} is a pair $\Gamma = (V,E)$ with $V$ finite
and $E \subseteq V\times A\times V$, such that if both $(u,a,v)$ and
$(u,a,v')$ are in $E$ then $v = v'$, and if both $(u,a,v)$ and
$(u',a,v)$ are in $E$ then $u = u'$.
Let $v\in V$. The pair $(\Gamma,v)$ is said to be \emph{admissible} if
the underlying graph of $\Gamma$ is connected (that is: the undirected
graph obtained from $\Gamma$ by forgetting the letter labels and the
orientation of edges), and if every vertex $w\in V$, except possibly
$v$, occurs in at least two edges in $E$.

Every admissible pair $(\Gamma,1)$ represents a unique subgroup $H$ of $F(A)$ in the following sense: if $u$ is a reduced word, then $u\in H$ if and only if $u$ labels a loop at 1 in
$\Gamma$ (by convention, an edge $(u,a,v)$ can be read from $u$ to $v$
with label $a$, or from $v$ to $u$ with label $a\inv$). One can show that $H$ is finitely generated. More precisely, the following procedure yields a basis of $H$: choose a spanning tree $T$ of $\Gamma$; for each edge $e = (u,a,v)$ of $\Gamma$ not in $T$, let $b_e = x_uax_v\inv$, where $x_u$ (resp. $x_v$) is the only reduced word labeling a path in $T$ from 1 to $u$ (resp. $v$); then the $b_e$ freely generate $H$ and as a result, the rank of $H$ is exactly $|E| -|V| + 1$.

Conversely, if $\vec h = (h_1,\ldots,h_k)$ is a tuple of reduced words, then the subgroup $H = \langle\vec h\rangle$ admits a Stallings graph, written $(\Gamma(H),1)$, which can be computed effectively and efficiently. A quick description of the algorithm is as follows. We first build a graph with edges labeled by letters in $\tilde A$, and then reduce it to an $A$-graph using \emph{foldings}.  First build a vertex $1$.  Then, for every $1\le i\le k$,
build a loop with label $h_i$ from $1$ to $1$, adding $|h_i|-1$ new vertices.
Change every edge $(u,a\inv,v)$ labeled by a letter of $A\inv$ into an
edge $(v,a,u)$. At this point, we have constructed the so-called \emph{bouquet of loops} labeled by the $h_i$.

Then iteratively identify the vertices $v$ and $w$
whenever there exists a vertex $u$ and a letter $a\in A$ such that
either both $(u,a,v)$ and $(u,a,w)$ or both $(v,a,u)$ and $(w,a,u)$
are edges in the graph (the corresponding two edges are
\textit{folded}, in Stallings' terminology).

The resulting graph $\Gamma$ is such that $(\Gamma,1)$ is admissible, the reduced words labeling a loop at 1 are exactly the elements of $H$
and, very much like in the (1-dimensional) reduction of words, that graph does
not depend on the order used to perform the foldings. The graph $(\Gamma(H),1)$ can  be computed in time almost linear (precisely: in time $\O(n\log^*n)$ \cite{2006:Touikan}).

Some algebraic properties of $H$ can be directly seen on its Stallings
graph $(\Gamma(H),1)$. For instance, one can show that $H$ is malnormal if and only if there exists no non-empty reduced word $u$ which labels a loop in two distinct vertices of $\Gamma(H)$ \cite{2002:Jitsukawa,2002:KapovichMyasnikov}. This property leads to an easy decision procedure of malnormality for subgroups of a free group. We refer the reader to~\cite{1983:Stallings,2000:Weil,2002:KapovichMyasnikov,2007:MiasnikovVenturaWeil} for more information about Stallings graphs.

If $\vec h$ is a tuple of elements of $F$, let $\Min(\vec h)$ be the minimum length of an
element of $\vec h$ and let $\Lcp(\vec h)$ be the length of the longest common prefix between two words in $\vec h^\pm$, see Figure~\ref{fig:central tree}\footnote{This definition is closely related with the notion of \emph{trie} of $\vec h^\pm$. The height of the trie of $\vec h^\pm$ is $1 + \Lcp(\vec h)$.}. We
say that $\vec h$ has the \emph{central tree property} if $2 \Lcp(\vec
h) < \Min(\vec h)$. 
\begin{figure}[htbp]
\centerline{\includegraphics[scale=1.1]{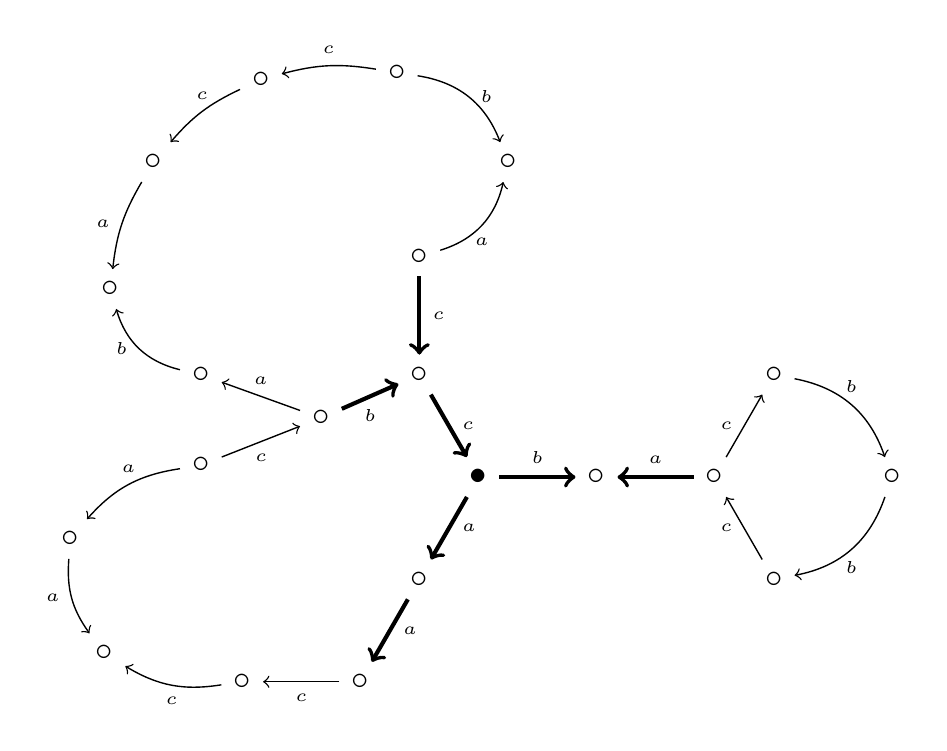}}
\caption{
The Stallings graph of the subgroup generated by $\vec h = (ba\inv cb^2a^2b\inv, a^2c^2a^{-2}cbc, c\inv b\inv aba\inv c^{-2}ba\inv c^2)$, has the central tree property and satisfies $\Lcp(\vec h) = 2$. The origin is denoted by $\bullet$ and the central tree is depicted in bold arrows.
}
\label{fig:central tree}
\end{figure}

\begin{proposition}\label{fact: central tree}
Let $\vec h = (h_1,\ldots, h_k)$ be a tuple of elements of $F(A)$ with the central tree property and let $H = \langle \vec h\rangle$. Then the Stallings graph $\Gamma(H)$ consists of a \emph{central tree} of height $t=\Lcp(\vec h)$ and of $k$ \emph{outer loops}, one for each $h_i$, connecting the length $t$ prefix and the length $t$ suffix of $h_i$ (two leaves of the central tree), of length $|h_i|-2t$ respectively. The set of vertices of the central tree can be identified with the set of prefixes of length at most $t$ of the words of $\vec h^\pm$.

In particular, $\vec h$ is a basis of $H$. Moreover, if $\vec g$ is a basis of $H$ also with the central tree property, then $\vec h^\pm$ and $\vec {g}^\pm$ coincide up to the order of their elements.
\end{proposition}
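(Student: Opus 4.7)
The plan is to analyse the Stallings folding algorithm applied to the bouquet of loops based at $1$ for the tuple $\vec h$, using the central tree property to pinpoint which foldings occur, and then to derive the basis and uniqueness assertions from the resulting description of $\Gamma(H)$.

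I would start from the bouquet $B(\vec h)$: one distinguished vertex $1$ and, for each $i$, a loop of length $|h_i|$ at $1$ spelling $h_i$, with $A\inv$-edges reversed into $A$-edges. Every non-origin vertex of $B(\vec h)$ sits at a unique position in a unique loop, and the two reduced paths from $1$ to that vertex (traversed in each direction around the loop) spell a prefix of $h_i$ and a prefix of $h_i\inv$ respectively. The foldings identify two such vertices exactly when the corresponding prefixes can be matched through a chain of such agreements.

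The technical core is to show that the foldings collapse precisely the length-$\le t$ prefix extremities of the loops, where $t=\Lcp(\vec h)$. Two prefixes of length $\le t$ from $\vec h^\pm$ agree as reduced words iff they are equal, and the corresponding foldings produce a tree $T$ rooted at $1$ whose vertices are in bijection with the prefixes of length $\le t$ of elements of $\vec h^\pm$. I then claim that no further folding occurs, which I would prove by case analysis: any additional identification involving a vertex at depth $>t$ from $1$ in the bouquet would force two prefixes of elements of $\vec h^\pm$ of length $>t$ to coincide as reduced words, contradicting $\Lcp(\vec h)=t$, unless the two vertices lie in the same loop $h_i$; in the latter case, the inequality $2t<\Min(\vec h)$ rules out the coincidence of a prefix-extremity vertex with a suffix-extremity vertex of the same loop, and the reducedness of $h_i$ forbids interior self-collisions. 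This yields the claimed decomposition: a central tree $T$ of height $t$ together with $k$ outer loops of length $|h_i|-2t\ge 1$, attached at the depth-$t$ vertices corresponding to the length-$t$ prefixes of $h_i$ and $h_i\inv$. The case enumeration is the main obstacle.

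The basis assertion follows from a rank count: $T$ contributes $|V_T|$ vertices and $|V_T|-1$ edges, and each outer loop contributes $|h_i|-2t-1$ interior vertices and $|h_i|-2t$ edges, so $H$ has rank $|E|-|V|+1=k$; since $\vec h$ generates the rank-$k$ free group $H$ with $k$ elements, it is a basis. For the uniqueness statement, the key observation is that the vertex set of $T$ is characterised intrinsically in $\Gamma(H)$ as $\{v : d_{\Gamma(H)}(1,v)\le t\}$, because every outer-loop interior vertex is at graph-distance at least $t+1$ from $1$ while every tree vertex at tree-depth $\ell\le t$ is at graph-distance $\ell$. Applying the structural description to a second CTP basis $\vec g$ of $H$ with $t'=\Lcp(\vec g)$ expresses the same graph $\Gamma(H)$ in the same way with $t'$ in place of $t$; matching the two characterisations forces $t=t'$, so the two decompositions coincide, and reading each outer loop as a loop at $1$ in its two traversal directions recovers $\vec h^\pm=\vec g^\pm$ up to order.
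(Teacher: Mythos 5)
Your approach matches the paper's in spirit: analyse what the Stallings foldings do to the bouquet of loops, and read off the structure. The paper's own proof is essentially a one-liner ("the cancellation stops before eating any $h_i$; the result follows"), so you are filling in substantially more detail. The structural part is right: the foldings identify exactly the length-$\le t$ prefixes of $\vec h^\pm$, and no further fold can occur because a fold at a depth-$t$ vertex would force two words of $\vec h^\pm$ to agree on a length-$(t+1)$ prefix (contradicting $\Lcp(\vec h)=t$), while $2t<\Min(\vec h)$ guarantees each outer arc has positive length. Your rank count $|E|-|V|+1=k$ plus Hopfianity of $F_k$ is a clean way to get the basis claim, and is at least as good as the paper's implicit "read a basis off a spanning tree" route.

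The gap is in the uniqueness step. You characterise the vertex set of the central tree as $\{v: d_{\Gamma(H)}(1,v)\le t\}$ — correct for each decomposition — and then assert that "matching the two characterisations forces $t=t'$." That step is circular as written: $t$ appears on both sides, and a metric ball of radius $t$ and one of radius $t'>t$ are both metric balls; nothing in your argument rules out $\Gamma(H)$ admitting a "tree of height $t$ plus arcs" decomposition simultaneously at two different radii. (Naive repairs such as "$t$ is the largest radius at which the ball is a tree" fail because outer arcs of length $1$ or $2$ produce self-loops or two-cycles at the depth-$t$ leaves.) What actually closes the argument is something like: if $t<t'$, then the $\vec g$-outer arcs must be exactly the sub-arcs of the $\vec h$-outer arcs at depth $\ge t'$; for there to be $k$ of them of positive length one needs $\Min(\vec h)>2t'$, and then the unique reduced loop at $1$ through the middle of $L_i$ is literally the same loop as the one through $L_i$ itself, so $g_{\sigma(i)}=h_i^{\pm 1}$ for each $i$, whence $\vec g^\pm=\vec h^\pm$ and $t'=\Lcp(\vec g)=\Lcp(\vec h)=t$ after all — a contradiction. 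You should supply some argument of this type rather than asserting the equality $t=t'$. (In fairness, the paper's "the result follows immediately" skips this too, but since you chose to spell things out, this is the step that needs spelling out.)
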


\begin{proof}
The central tree property shows that the cancellation (folding) that occurs when one considers the bouquet of $h_i$-labeled loops around the origin, stops before canceling entirely any one of the $h_i$. The result follows immediately.
\eop

Under the central tree property, we record an interesting sufficient condition for malnormality.

\begin{proposition}\label{prop: sufficient malnormality}
Let $\vec h = (h_1,\ldots, h_k)$ be a tuple of elements of $F(A)$ with the central tree property and let $H = \langle \vec h\rangle$. Let us assume additionally that $3\Lcp(\vec h) < \Min(\vec h)$ and that no word of length at least $\frac12(\Min(\vec h) - 3\Lcp(\vec h))$ has several occurrences as a factor of an element of $\vec h^\pm$, then $H$ is malnormal.
\end{proposition}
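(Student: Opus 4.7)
The plan is to argue by contradiction. Suppose $H$ is not malnormal; by the graphical characterization of malnormality recalled in Section~\ref{sec: graphical representation}, there is a non-empty reduced word $u$ labelling a loop at two distinct vertices $v_1\neq v_2$ of $\Gamma(H)$. Let $p_1,p_2$ denote the corresponding walks of length $n=|u|$, and set $t=\Lcp(\vec h)$, $m=\frac12(\Min(\vec h)-3t)$. Since $\Gamma(H)$ is folded, the two walks are vertex-disjoint: $p_1(i)\neq p_2(i)$ for every $i$. By Proposition~\ref{fact: central tree}, $\Gamma(H)$ consists of a central tree of height $t$ together with outer loops $L_i$ of respective lengths $|h_i|-2t\geq\Min(\vec h)-2t$, and any reduced walk in $\Gamma(H)$ alternates between \emph{tree sojourns} (inside the central tree, of length at most $2t$) and \emph{outer traversals} (inside a single outer loop $L_i$, of length $|h_i|-2t$ when they go from one endpoint leaf to the other, which is mandatory once an outer loop has been entered because outer vertices have degree $2$ and backtracking is forbidden).

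The argument then reduces to exhibiting a contiguous time window $[\alpha+1,\alpha+\ell]$ of length $\ell\geq m$ during which both walks are simultaneously confined to single outer loops, say $L_{j_1}$ for $p_1$ and $L_{j_2}$ for $p_2$. If such a window exists, the factor $u[\alpha+1,\alpha+\ell]$ is read by $p_1$ inside $L_{j_1}$ and by $p_2$ inside $L_{j_2}$, so it occurs as a factor of $h_{j_1}^{\pm 1}$ and simultaneously as a factor of $h_{j_2}^{\pm 1}$; vertex-disjointness ensures that these are two \emph{distinct} occurrences in $\vec h^\pm$ of a word of length at least $m$, contradicting the hypothesis.

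To produce such a window, fix any outer traversal $W_1=[a,a+L]$ of $p_1$ (so $L\geq\Min(\vec h)-2t$) and inspect the position of $p_2$ at the midpoint $a+\lfloor L/2\rfloor$. If $p_2$ is at an outer vertex there, its containing outer traversal has length $\geq\Min(\vec h)-2t$, and an elementary four-way case analysis on how its endpoints compare with $a$ and $a+L$ shows that it overlaps $W_1$ in a contiguous window of length at least $L/2\geq(\Min(\vec h)-2t)/2\geq m$. Otherwise $p_2$ is caught in a tree sojourn of length $\leq 2t$ straddling the midpoint, and the two outer traversals of $p_2$ flanking it either include one that fits inside $W_1$ (immediate) or both extend beyond $W_1$. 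I expect the main obstacle to lie in this last, misaligned sub-case: naively, each of the two overlaps between $W_1$ and $p_2$'s flanking outer traversals is only about $L/2-t=(\Min(\vec h)-4t)/2$ long, just short of $m=(\Min(\vec h)-3t)/2$ by $t/2$. Closing this gap will likely require selecting a different outer traversal of $p_1$, running the symmetric argument with the roles of $p_1$ and $p_2$ swapped, or exploiting that in this misaligned configuration $p_2$'s outer traversals must necessarily extend well past $W_1$ and hence align substantially with a subsequent outer traversal of $p_1$.
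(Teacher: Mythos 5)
Your overall strategy --- use the graphical characterization of malnormality, the structure of $\Gamma(H)$ from Proposition~\ref{fact: central tree}, and produce a long word read along two vertex-disjoint subpaths so as to contradict the hypothesis on repeated factors --- matches the paper's. You correctly identify where your version breaks down, and the gap is genuine.

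The paper first reduces to $u$ cyclically reduced and conjugates so that one of the loops departs from a leaf of the central tree into an outer loop; this makes the prefix $v$ of $u$ of length $\Min(\vec h)-2t$ automatically a factor of a single $h_i^{\pm 1}$. It then classifies where the second loop starts and, crucially, never insists that the second walk stay inside an outer loop: if that walk dives into the central tree and turns around after at most $t$ steps, the whole arc from the turning point outward (the remaining tree edges up to a leaf followed by the outer-loop edges) is still labeled by a factor of some $h_\ell^{\pm 1}$, since the central tree's vertices are precisely the length-$\le t$ prefixes of words in $\vec h^\pm$. Balancing ``$s'$ is reached late'' against ``$s'$ is reached early, plus at most $t$ further tree steps'' is exactly what produces the threshold $\frac12(\Min(\vec h)-3t)$.

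Your plan demands a contiguous window during which \emph{both} walks are inside outer loops. This is strictly stronger than necessary and is precisely what discards the up-to-$t$ tree edges of the outbound half of a tree sojourn; that discarded length is the $t/2$ deficit you compute in the misaligned sub-case. The fixes you float (choose a different outer traversal, swap $p_1$ and $p_2$, chase alignment downstream) do not escape this: the arithmetic permits a phase-shifted configuration --- all outer loops of length close to $\Min(\vec h)-2t$, tree sojourns close to $2t$, the two walks offset by half a period --- in which every outer-outer overlap has the same deficient length. The missing idea is to let the common factor run through the tree edges taken after the turning point of a sojourn, as above. It would also be prudent to first reduce to $u$ cyclically reduced, since otherwise the walk need not contain a complete outer traversal at all.
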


\begin{remark}
In the proof below, and in several other statements and proofs later in the paper, we consider words whose length is specified by an algebraic expression which does not always compute to an integer (\textit{e.g.}, $\frac12(\Min(\vec h) - 3\Lcp(\vec h))$). To be rigorous, we should consider only the integer part of these expressions. For the sake of simplicity, we dispense with this extra notation, and implicitly consider that if a word of length $\ell$ is considered, then we mean that its length is $\lfloor \ell\rfloor$.
\end{remark}

\begin{proof}
Let $m = \Min(\vec h)$ and $t = \Lcp(\vec h)$. Proposition~\ref{fact: central tree} shows that $\Gamma(H)$ consists of a central tree of height $t$ and of outer loops, one for each $h_i$, of length $|h_i|-2t \ge m-2t$.

If $H$ is not malnormal, then a word $u$ labels a loop at two distinct vertices of $\Gamma(H)$.  Without loss of generality, $u$ is cyclically reduced. Moreover, given the particular geometry of $\Gamma(H)$, both loops visit the central tree. Without loss of generality, we may assume that one of the $u$-labeled loops starts in the central tree, at distance exactly $t$ from the base vertex 1, and travels away from 1. In particular, $|u| \ge m-2t$, and if $v$ is the prefix of $u$ of length $m-2t$, then $v$ is a factor of some $h_i^{\pm1}$.

Let $s$ be the start state of the second $u$-labeled loop: reading this loop starts with reading the word $v$. Suppose that $s$ is in the central tree: either reading $u$ (and $v$) from $s$ takes us away from $1$ towards a leaf of the central tree and into an outer loop, and $v$ is a factor of some $h_j^{\pm1}$; or reading $v$ from $s$ moves us towards 1 for a distance at most $t$, after which the path travels away from 1, along a path labeled by a factor of some $h_j^{\pm1}$, for a distance at least $m-3t$. In either case, a factor of $u$ of length $m-3t > \frac12(m-3t)$ has two occurrences in $\vec h^\pm$.

Suppose now that $s$ is on an outer loop (say, associated to $h_j^{\pm1}$) and that $s'$ is the first vertex of the central tree reached along the loop. If $s'$ is reached after reading a prefix of $u$ of length greater than $\frac12(m-3t)$, then the prefix of $v$ of length $\frac12(m-3t)$ is a factor of $h_j^{\pm1}$. Otherwise $v$ labels a path from $s$ which first reaches $s'$, then travels towards 1 in the central tree for a distance at most $t$, and thence away from 1, along a path labeled by some $h_\ell^{\pm1}$, which it follows over a length at least equal to $(m - 2t) - \frac12(m - 3t) - t = \frac12(m - 3t)$.

Thus, in every case, $u$ contains a factor of length $\frac12(m - 3t)$ with two distinct occurrences as a factor of an element of $\vec h^\pm$ and this concludes the proof.
\eop

To conclude this section, we note that the properties discussed above are preserved when going from a tuple $\vec h$ to a sub-tuple: say that a tuple $\vec g$ is \emph{contained in} a tuple $\vec h$, written $\vec g \le \vec h$, if every element of $\vec g$ is an element of $\vec h$.

\begin{proposition}\label{prop: inherited properties}
Let $\vec g, \vec h$ be tuples of reduced words such that $\vec g \le \vec h$.
\begin{itemize}
\item If $\vec h$ has the central tree property, so does $\vec g$.

\item If $\vec h$ consists of cyclically reduced words and $\vec h$ has Property $C'(\lambda)$, then so does $\vec g$.

\item If $\vec h$ has the central tree property, then $\langle\vec g\rangle$ is a free factor of $\langle\vec h\rangle$, and $\langle\vec g\rangle$ is malnormal if $\langle\vec h\rangle$ is.
\end{itemize}
\end{proposition}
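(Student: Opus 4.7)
My plan is to handle the three items separately; the arguments are all short, and the first two are essentially bookkeeping on the definitions. For item (1), I will use the inclusion $\vec g^\pm \subseteq \vec h^\pm$ to observe immediately that $\Lcp(\vec g) \le \Lcp(\vec h)$ and $\Min(\vec g) \ge \Min(\vec h)$, which combine to give $2\,\Lcp(\vec g) \le 2\,\Lcp(\vec h) < \Min(\vec h) \le \Min(\vec g)$. For item (2), I will note that every cyclic conjugate of an element of $\vec g^\pm$ is also a cyclic conjugate of an element of $\vec h^\pm$, so any piece for $\vec g$ is also a piece for $\vec h$, and the small cancellation inequality $|u| < \lambda|w|$ transfers directly from $\vec h$ to $\vec g$.

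For the free factor claim in item (3), I will invoke Proposition~\ref{fact: central tree}: under the central tree property, $\vec h$ is a basis of $\langle\vec h\rangle$, so the subset of $\vec h$ making up $\vec g$ is a sub-basis. Splitting the basis along this partition yields a free product decomposition $\langle\vec h\rangle = \langle\vec g\rangle * L$, where $L$ is generated by the elements of $\vec h$ outside $\vec g$, which exhibits $\langle\vec g\rangle$ as a free factor of $\langle\vec h\rangle$.

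For the malnormality claim I plan to combine two ingredients. The first is the transitivity of malnormality: if $A$ is malnormal in $B$ and $B$ in $C$, then $A$ is malnormal in $C$, since any $x \in C$ with $A \cap A^x \neq 1$ forces $B \cap B^x \neq 1$, hence $x \in B$, and then malnormality of $A$ in $B$ gives $x \in A$. The second is the classical fact that a free factor is malnormal in its ambient free product, and this is the one step that really needs work, so I view it as the main (mild) obstacle. I would prove it from the normal form theorem for free products: for any $x \in \langle\vec g\rangle * L$ whose reduced normal form involves at least one syllable from $L$, and any non-trivial $a \in \langle\vec g\rangle$, the reduced normal form of $x^{-1} a x$ begins and ends with $L$-syllables and therefore cannot lie in $\langle\vec g\rangle$. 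Everything else in the proof is a direct unpacking of the definitions.
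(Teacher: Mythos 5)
Your proof is correct and follows essentially the same route as the paper's: items (1) and (2) are direct unwindings of the definitions, and item (3) combines Proposition~\ref{fact: central tree} (so that $\vec h$ and $\vec g$ are bases), the sub-basis $\Rightarrow$ free-factor observation, the folklore fact that a free factor is malnormal, and transitivity of malnormality. The paper leaves the free-factor-malnormal fact to ``elementary reasons''; you propose to prove it via normal forms, which is the standard way. One small slip there: the reduced normal form of $x^{-1}ax$ need not begin and end with $L$-syllables. For example, if $x = g_1\ell g_2$ with $g_1, g_2$ nontrivial in $\langle\vec g\rangle$ and $\ell$ nontrivial in $L$, then $x^{-1}ax$ in normal form begins and ends with $g$-syllables. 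What is true, and suffices, is that the normal form of $x^{-1}ax$ has at least one $L$-syllable (indeed at least two), hence cannot lie in $\langle\vec g\rangle$; alternatively, first normalize by replacing $x$ with a double-coset representative in $\langle\vec g\rangle x \langle\vec g\rangle$ whose normal form does begin and end with $L$-syllables, using $\langle\vec g\rangle \cap \langle\vec g\rangle^{gx g'} = (g')^{-1}(\langle\vec g\rangle \cap \langle\vec g\rangle^{x})g'$. Either patch closes the gap and the rest is fine.
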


\begin{proof}
The first two properties are immediate from the definition.
Supose now that $\vec h$ has the central tree property. Then by Proposition~\ref{fact: central tree}, $\vec h$ is a basis of $\langle\vec h\rangle$, and by the first statement of the current proposition, $\vec g$ is a basis of $\langle\vec g\rangle$. Since $\vec g \le \vec h$, $\langle\vec g\rangle$ is a free factor of $\langle\vec h\rangle$.

In particular, $\langle\vec g\rangle$ is malnormal in $\langle\vec h\rangle$ (a free factor always is, by elementary reasons). It is immediate from the definition that malnormality is transitive, so if $\langle\vec h\rangle$ is malnormal in $F$, then so is $\langle\vec g\rangle$.
\eop

\section{Random models and generic properties}

We will discuss several models of randomness for finitely presented groups and finitely generated subgroups, or rather, for finite tuples of cyclically reduced words (group presentations) and finite tuples of reduced words. In this section, we fix a general framework for these models of randomness and we survey some of the known results.

\subsection{Generic properties and negligible properties}

Let us say that a function $f$, defined on $\N$ and such that $\lim f (n) = 0$, is \emph{exponentially} (resp. \emph{super-polynomially}, \emph{polynomially}) \emph{small} if $f(n) = o(e^{-dn})$ for some $d > 0$ (resp. $f(n) = o(n^{-d})$ for every positive integer $d$, $f(n) = o(n^{-d})$ for some positive integer $d$).

Given a sequence of probability laws $(\P_n)_n$ on a set $S$, we say that a subset $X \subseteq S$ is \emph{negligible} if $\lim_n \P_n(X) = 0$, and \emph{generic} if its complement is negligible.%
\footnote{This is the same notion as \emph{with high probability} or \emph{with overwhelming probability}, which are used in the discrete probability literature.}

We also say that $X$ is \emph{exponentially} (resp. \emph{super-polynomially}, \emph{polynomially}) \emph{negligible} if $\P_n(X)$ tends to $0$ and is exponentially (resp. super-polynomially, polynomially) small. And it is \emph{exponentially} (resp. \emph{super-polynomially}, \emph{polynomially}) \emph{generic} if its complement is exponentially (resp. super-polynomially, polynomially)  negligible.

In this paper, the set $S$ will be the set of all finite tuples of reduced words, or cyclically reduced words, and the probability laws $\P_n$ will be such that every subset is measurable: we will therefore not specify in the statements that we consider only measurable sets.

The notions of genericity and negligibility have elementary closure properties that we will use freely in the sequel. For instance, a superset of a generic set is generic, as well as the intersection of finitely many generic sets. Dual properties hold for negligible sets.

\subsection{The few-generator model and the density model}\label{sec: two classical models}

In this section, we review the results known on two random models, originally introduced to discuss finite presentations. We discuss more general models in Section~\ref{sec: proba model} below.

\subsubsection{The few-generator model}\label{sec: few-generator model}

In the \emph{few-generator model}, an integer $k \ge 1$ is fixed, and we let $\P_n$ be the uniform probability on the set of $k$-tuples of words of $F$ of length at most $n$. Proposition~\ref{prop: few generators basic} is established by elementary counting arguments, see Gromov \cite[Prop. 0.2.A]{1987:Gromov} or Arzhantseva and Ol'shanskii \cite[Lemma 3]{1996:ArzhantsevaOlshanskii}.

\begin{proposition}\label{prop: few generators basic}
Let $k \ge 1$, $0 < \alpha < \frac12$, $2\alpha < \beta < 1$ and $ 0 < \lambda < 1$. Then a $k$-tuple $\vec h$ of elements of $F$ of length at most $n$ picked uniformly at random, exponentially generically satisfies the following properties:
\begin{itemize}
\item $\Min(\vec h) > \beta n$,

\item $\Lcp(\vec h) < \alpha n$,

\item no word of length $\lambda n$ has two occurrences as a factor of an element of $\vec h^\pm$.
\end{itemize}
\end{proposition}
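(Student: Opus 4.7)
The plan is to prove each of the three properties by a union-bound argument over the relevant bad events, exploiting the fact that there are only $N_n = \Theta((2r-1)^n)$ reduced words of length at most $n$, so that the uniform law on $k$-tuples assigns weight $N_n^{-k}$ to each tuple. The key counting ingredient is the following: a reduced word of length $\ell$ extending a prescribed non-empty reduced word $w$ of length $m \le \ell$ has its remaining $\ell - m$ letters constrained only by non-cancellation with the previous one, so there are exactly $(2r-1)^{\ell - m}$ such words. By summing over $\ell \le n$, the number of reduced words of length at most $n$ having $w$ as a prefix (or more generally containing $w$ as a factor at a prescribed position) is $\Theta((2r-1)^{n-m})$.

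For the first property, I would upper-bound the probability of $\{\Min(\vec h) \le \beta n\}$ by $k$ times the probability that a single coordinate $h_i$ has length at most $\beta n$. Using $|\Red_{\le \beta n}| = \Theta((2r-1)^{\beta n})$ and $N_n = \Theta((2r-1)^n)$, this is $\Theta((2r-1)^{-(1-\beta)n})$, which is exponentially small since $\beta < 1$. For the second property, I would bound the probability that two elements $u, v$ of $\vec h^\pm$ (not necessarily distinct) agree on their first $\alpha n$ letters. Fixing their positions in $\vec h^\pm$ (at most $(2k)^2$ choices) and the common prefix $w \in \Red_{\alpha n}$ (that is, $\Theta((2r-1)^{\alpha n})$ choices), the number of reduced words of length at most $n$ extending $w$ is $\Theta((2r-1)^{n-\alpha n})$; combining and dividing by $N_n^2$ gives a probability of order $(2r-1)^{-\alpha n}$ per ordered pair of positions, hence exponentially small overall. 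The sub-case $u = h_i$, $v = h_i\inv$ requires a short separate argument, but sharing a length-$\alpha n$ prefix still imposes $\alpha n$ letter-by-letter constraints and is controlled by the same count.

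For the third property, I would count ordered pairs of length-$\lambda n$ factor-windows in $\vec h^\pm$. A window is specified by a coordinate, a sign, and a starting offset, so there are $\O(kn)$ such windows and $\O(k^2 n^2)$ ordered pairs. For a fixed pair of windows lying in distinct coordinates of $\vec h$, independence plus the factor-count gives probability $\O((2r-1)^{-\lambda n})$ of carrying identical content. When the two windows lie in the same coordinate, or in $h_i$ and $h_i\inv$, a direct count shows the same bound: if the windows are disjoint the argument reduces to specifying $2\lambda n$ letters instead of $\lambda n$ then identifying them, while if they overlap, equality forces a periodicity constraint that only further reduces the number of admissible words. A union bound over the $\O(k^2 n^2)$ pairs gives a total probability of order $k^2 n^2 (2r-1)^{-\lambda n}$, exponentially small.

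The main technical care, rather than a genuine obstacle, lies in the third step: the book-keeping of factor windows within a single word and across $h_i$ versus $h_i\inv$, and in particular the overlapping case. Once the three bounds are in place, the statement follows from the fact that a finite intersection of exponentially generic sets is exponentially generic.
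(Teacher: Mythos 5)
Your proposal is correct and is precisely the elementary counting argument that the paper attributes to Gromov and to Arzhantseva--Ol'shanskii (the paper itself gives no proof, only these citations). All three steps are sound: the length bound via $|\Red_{\le\beta n}|/|\Red_{\le n}|=\Theta((2r-1)^{-(1-\beta)n})$; the prefix bound by summing over the $\Theta((2r-1)^{\alpha n})$ candidate common prefixes (and the $h_i$ versus $h_i\inv$ sub-case, where the prefix forces a suffix and a reduced word cannot overlap its own inverse, so one again loses a factor $(2r-1)^{-\alpha n}$); and the repeated-factor bound by a union over $\O(k^2n^2)$ ordered window pairs, with the overlapping same-coordinate case handled by the standard periodicity lemma, which caps the number of admissible factors and keeps the $(2r-1)^{-\lambda n}$ decay. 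It is worth observing that the paper's later machinery does the same work in greater generality: properties (1)--(3) are exactly what Lemmas~\ref{lm: factor repeated twice}, \ref{lm: cyclically reduced} and \ref{lm: overlapping factors} (and Proposition~\ref{prop: heartsize}) yield once specialized to the uniform distribution on $\Red_n$, which is prefix-heavy with $C=1$, $\alpha=\frac1{2r-1}$ (Example~\ref{ex: uniform distribution on words}). Your direct count is equivalent and more self-contained; the paper's abstraction buys reusability across non-uniform distributions. One small stylistic point: in your second step, "two elements $u,v$ of $\vec h^\pm$ (not necessarily distinct)" should read "distinct," since $\Lcp$ is taken over distinct pairs; what you actually bound (the pairs $(h_i^{\pm1},h_j^{\pm1})$ with $i\ne j$ together with $(h_i,h_i\inv)$) is the correct set of events.
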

In view of Propositions~\ref{fact: central tree} and \ref{prop: sufficient malnormality}, this yields the following corollary (\cite{2013:BassinoMartinoNicaud}, and \cite{2002:Jitsukawa} for the malnormality statement).
\begin{corollary}\label{cor: few generators}
Let $k \ge 1$. If $\vec h$ is a $k$-tuple  of elements of $F$ of length at most $n$ picked uniformly at random and $H = \langle\vec h\rangle$, then 
\begin{itemize}
\item exponentially generically, $\vec h$ has the central tree property, and in particular, $\Gamma(H)$ can be constructed in linear time (in $k \cdot n$), simply by computing the initial cancellation of the elements of $\vec h^\pm$; $H$ is freely generated by the elements of $\vec h$, and $H$ has rank $k$;

\item exponentially generically, $H$ is malnormal.
\end{itemize}
Moreover, if $\vec h$ and $\vec g$ generate the same subgroup, then exponentially generically, $\vec h^\pm = \vec {g}^\pm$ up to the order of the elements in the tuples.
\end{corollary}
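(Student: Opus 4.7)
The plan is to assemble the three bullets by feeding the generic bounds of Proposition~\ref{prop: few generators basic} into the geometric statements of Propositions~\ref{fact: central tree} and~\ref{prop: sufficient malnormality}. The only substantive choice is that of the parameters $\alpha$, $\beta$, $\lambda$, which must be tuned so that all the geometric hypotheses hold simultaneously.

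I would first fix $\alpha$ and $\beta$ with $0 < \alpha < \tfrac16$ and $3\alpha < \beta < 1$, and then set $\lambda = \tfrac12(\beta - 3\alpha) \in (0,1)$. Proposition~\ref{prop: few generators basic} then ensures that, exponentially generically, $\Min(\vec h) > \beta n$, $\Lcp(\vec h) < \alpha n$, and no word of length $\lambda n$ occurs twice as a factor of an element of $\vec h^\pm$. Since a finite intersection of exponentially generic events is exponentially generic, we may work under all three conditions at once.

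From $2\Lcp(\vec h) < 2\alpha n < \beta n < \Min(\vec h)$ the central tree property is immediate, so Proposition~\ref{fact: central tree} applies and gives the explicit description of $\Gamma(H)$ as a central tree of height $\Lcp(\vec h)$ equipped with $k$ outer loops; this is what makes the linear-time construction possible (it suffices to locate the initial cancellations between the $2k$ words of $\vec h^\pm$, \emph{e.g.}\ by sorting their prefixes of length $\Lcp(\vec h)$). Simultaneously, $\vec h$ is a basis of $H$ and $H$ has rank $k$. The uniqueness clause is the second part of Proposition~\ref{fact: central tree}: whenever $\vec h$ and $\vec g$ both have the central tree property (an exponentially generic condition), two tuples generating the same subgroup must coincide up to inverses and reordering.

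For malnormality I would invoke Proposition~\ref{prop: sufficient malnormality}. The first hypothesis $3\Lcp(\vec h) < 3\alpha n < \beta n < \Min(\vec h)$ is built into the choice of parameters, and $\tfrac12(\Min(\vec h)-3\Lcp(\vec h)) > \tfrac12(\beta-3\alpha)n = \lambda n$; any word of length at least the left-hand side contains a length-$\lambda n$ prefix, and two distinct occurrences of such a word as a factor of $\vec h^\pm$ would force two distinct occurrences of that prefix, contradicting the third generic bound. The hypotheses of Proposition~\ref{prop: sufficient malnormality} are therefore met. No genuine obstacle arises: the argument is essentially bookkeeping, the key observation being that the third item of Proposition~\ref{prop: few generators basic} is available for any $\lambda \in (0,1)$, so one can make it small enough to accommodate the $\tfrac12(\Min - 3\Lcp)$ threshold required for malnormality.
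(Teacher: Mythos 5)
Your proof is correct and takes exactly the approach the paper intends: the paper gives no separate proof of the corollary, merely noting that it follows ``in view of'' Propositions~\ref{fact: central tree} and~\ref{prop: sufficient malnormality} applied under the exponentially generic bounds of Proposition~\ref{prop: few generators basic}, and your parameter bookkeeping ($3\alpha<\beta$ to feed the $3\Lcp<\Min$ hypothesis, $\lambda=\tfrac12(\beta-3\alpha)$ so a repeated factor of length $\tfrac12(\Min-3\Lcp)$ forces a repeated factor of length $\lambda n$) is precisely what makes that ``in view of'' rigorous. The only cosmetic remark is that the constraint $\alpha<\tfrac16$ is stronger than needed ($\alpha<\tfrac13$ already leaves room for $3\alpha<\beta<1$), but this costs nothing.
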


The following statement follows from Proposition~\ref{prop: inherited properties}, and from Theorem~\ref{thm: phase transitions} below (which is independent).

\begin{corollary}\label{cor: few generators small cancellation}
In the few-generator model, if $\vec h$ is a $k$-tuple of cyclically reduced words of length at most $n$, then
\begin{itemize}
\item for any $0 < \lambda < \frac12$, $\vec h$ exponentially generically satisfies the small cancellation property $C'(\lambda)$ ;

\item exponentially generically, the group $\langle A \mid \vec h\rangle$ is infinite, torsion-free, word-hyperbolic, it has solvable word problem (by Dehn's algorithm) and solvable conjugacy problem.
\end{itemize}
\end{corollary}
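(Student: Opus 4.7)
The plan is to establish the first bullet as a consequence of Proposition~\ref{prop: few generators basic} combined with a union bound over pairs of positions in cyclic conjugates, after which the second bullet follows immediately: setting $\lambda = \frac16$ in the first bullet, $\vec h$ exponentially generically satisfies $C'(\frac16)$, and Proposition~\ref{prop: small cancellation properties} ensures $\langle A \mid \vec h\rangle$ is infinite, torsion-free, word-hyperbolic, with solvable word problem (by Dehn's algorithm) and solvable conjugacy problem.

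For the first bullet, fix $\lambda \in (0, \frac12)$ and choose any $\beta \in (2\lambda, 1)$. By Proposition~\ref{prop: few generators basic}, exponentially generically $\Min(\vec h) > \beta n$; under this event, every cyclic conjugate of an element of $\vec h^\pm$ has length at least $\beta n$, so a failure of $C'(\lambda)$ forces the existence of a word $u$ of length at least $\ell := \lambda \beta n$ occurring as a prefix of two distinct cyclic conjugates of elements of $\vec h^\pm$. Prefixes of cyclic conjugates of $g$ of length at most $|g|$ are exactly the length-$\ell$ factors of the doubled word $gg$ starting at positions $0, 1, \ldots, |g|-1$, so there are at most $(2kn)^2 = O(n^2)$ candidate pairs of (element, starting position) to examine. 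For each such candidate pair, a standard counting argument for uniform random cyclically reduced words bounds the probability that the two length-$\ell$ factors coincide by $O((2r-1)^{-\ell})$, both when the positions lie in different (hence independent) words of $\vec h$, and when they overlap inside a single word of $\vec h$ (since coincidence then forces a periodicity that strictly reduces the count of admissible underlying words). A union bound yields total failure probability $O\!\left(n^2 (2r-1)^{-\lambda\beta n}\right)$, which is exponentially small.

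The main obstacle in this direct approach is the careful handling of cyclic wrap-around and of overlapping occurrences: prefixes of cyclic conjugates are factors of $gg$ rather than of $g$, and the third item of Proposition~\ref{prop: few generators basic} does not directly cover them; moreover, within a single random word one must argue that forced periodicities do not inflate the naive $(2r-1)^{-\ell}$ per-pair bound. An alternative route, flagged by the authors, is to view the few-generator model as a density-zero specialization of Gromov's density model and invoke Theorem~\ref{thm: phase transitions} together with Proposition~\ref{prop: inherited properties} (to transfer $C'(\lambda)$ from an enveloping random tuple down to $\vec h$), which bypasses these combinatorial subtleties at the cost of appealing to much heavier machinery.
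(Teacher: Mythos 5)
Your closing paragraph ("an alternative route, flagged by the authors") is in fact exactly the paper's proof: the corollary is stated as a direct consequence of Theorem~\ref{thm: phase transitions} together with Proposition~\ref{prop: inherited properties}, treating the $k$-tuple as a sub-tuple of a larger random tuple at any density $d < \frac\lambda2$ and pulling $C'(\lambda)$ down by the hereditary property. (The passage from the $n$-sphere to the $n$-ball, which your corollary needs, is the observation of Ollivier that the paper records right after Theorem~\ref{thm: phase transitions} and re-verifies in Section~\ref{sec: applications to uniform}.) So the second route you mention is not heavier machinery than what the paper uses; it \emph{is} the paper's route, and the second bullet then follows from Proposition~\ref{prop: small cancellation properties} exactly as you say.

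Your primary, direct approach is a genuinely different derivation and is sound in outline, but you should be clear-eyed that the step you label "a standard counting argument" is carrying almost the entire load, and it is precisely the nontrivial technical content that the paper develops in Sections~\ref{sec: repeated factors} and~\ref{sec: repeated cyclic factors} and packages as Theorem~\ref{thm: general small cancellations}. Concretely: the per-pair bound $O\bigl((2r-1)^{-\ell}\bigr)$ is immediate only when the two occurrences lie in distinct, non-straddling positions of independent words. When one occurrence straddles the cyclic boundary, you need the analogue of Lemma~\ref{lm: general factors}/Lemma~\ref{lm: multiple no overlap}; when the two occurrences overlap inside the same word, the event is not a product of independent constraints and the periodicity-based reduction you gesture at is exactly Lemma~\ref{lm: overlapping factors} (and, with straddling, the considerably more delicate Lemma~\ref{lm: multiple overlap}). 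Note also that the resulting bound in the overlapping case is of the form $O(n\ell\,(2r-1)^{-\ell})$ rather than a clean per-pair $O((2r-1)^{-\ell})$; the polynomial prefactor is harmless because $\ell = \lambda\beta n$ grows linearly, but the accounting is not the naive $(2kn)^2$ union bound you announce. Finally, Proposition~\ref{prop: few generators basic}(3) concerns repeated (ordinary) factors, not cyclic factors, so it cannot be invoked directly here; you correctly flag this, but it means your direct proof cannot lean on Proposition~\ref{prop: few generators basic} for the single-word case and must supply the cyclic-factor lemmas itself. In short: your direct route is correct and more self-contained once the overlapping/straddling lemmas are supplied, whereas the paper's route sidesteps all of that by inheriting $C'(\lambda)$ from the density model; both are valid, but the "standard" step in your version is where the real work lives.
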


\subsubsection{The density model}\label{sec: density model}

In the \emph{density model}, a density $0 < d < 1$ is fixed, and a tuple of cyclically reduced elements of the $n$-sphere of density $d$ is picked uniformly at random: that is, the tuple $\vec h$ consists of $|\calC_n|^d$ cyclically reduced words of length $n$. This model was introduced by Gromov \cite{1993:Gromov} and complete proofs were given by Ol'shanskii \cite{1992:Olshanskii}, Champetier \cite{1995:Champetier} and Ollivier \cite{2004:Ollivier}.

\begin{theorem}\label{thm: phase transitions}
Let $0 < \alpha < d < \beta < 1$. In the density model, the following properties hold:
\begin{enumerate}
%
%
\item exponentially generically, every word of length $\alpha n$ occurs as a factor of a word in $\vec h$, and some word of length $\beta n$ fails to occur as a factor of a word in $\vec h^\pm$;

\item if $d < \frac12$, then exponentially generically, $\vec h$ satisfies property $C'(\lambda)$ for $\lambda > 2d$ but $\vec h$ does not satisfy $C'(\lambda)$ for $\lambda < 2d$; in particular, at density $d < \frac1{12}$, $\vec h$ satisfies exponentially generically property $C'(\frac16)$ and the group $\langle A \mid \vec h\rangle$ is infinite and hyperbolic; and at density $d > \frac1{12}$, exponentially generically, $\vec h$ does not satisfy $C'(\frac16)$;

\item at density $d > \frac12$, exponentially generically, $\langle\vec h\rangle$ is equal to $F(A)$, or has index 2. In particular, the group $\langle A \mid \vec h\rangle$ is either trivial or $\Z/2\Z$;

\item at density $d < \frac12$, the group $\langle A \mid \vec h\rangle$ is generically infinite and hyperbolic.
\end{enumerate}
\end{theorem}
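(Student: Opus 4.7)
The plan is to attack parts~(1)--(3) by first- and second-moment arguments on factor counts, and to reduce part~(4) to the classical Gromov--Ollivier nontriviality theorem, which I would invoke rather than reprove.

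For part~(1), I fix a target word $w$ of length $\alpha n$ and let $N_w$ count its occurrences as a factor of an element of $\vec h$. Since $|\vec h|=|\calC_n|^d=\Theta((2r-1)^{dn})$, each element contributes $\Theta(n)$ factor windows of length $\alpha n$, and a fixed window reads $w$ with probability $\Theta((2r-1)^{-\alpha n})$ (the cyclic-reducedness conditioning contributes only a constant factor), so $\E[N_w]=\Theta(n(2r-1)^{(d-\alpha)n})$, exponentially large when $\alpha<d$. A Chernoff bound, exploiting independence across the $|\vec h|$ words of the tuple, makes $\P(N_w=0)$ doubly exponentially small, and a union bound over the $\Theta((2r-1)^{\alpha n})$ candidates $w$ yields the first assertion. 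For the second assertion, the number of length-$\beta n$ factors appearing in $\vec h^\pm$ is at most $2n|\vec h|=O(n(2r-1)^{dn})$, exponentially dominated by $|\calC_{\beta n}|$ when $\beta>d$, so generically some length-$\beta n$ word is missed; a variance estimate promotes ``in expectation'' to ``exponentially generically''.

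For part~(2), a $C'(\lambda)$ violation produces two distinct cyclic conjugates of elements of $\vec h^\pm$ sharing a prefix of length $\geq\lambda n$. There are $O(n^2|\vec h|^2)$ ordered pairs of such conjugates, each with coincidence probability $O((2r-1)^{-\lambda n})$, so the expected number of violations is $O(n^2(2r-1)^{(2d-\lambda)n})$, exponentially small when $\lambda>2d$; a parallel second-moment calculation on pairs of prefix-coincidences yields the converse when $\lambda<2d$. Specializing to $\lambda=\frac16$ and combining with Proposition~\ref{prop: small cancellation properties} gives the infinite, hyperbolic conclusion at $d<\frac1{12}$, and the second half of the dichotomy at $d>\frac1{12}$.

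For part~(3), at $d>\frac12$ one has $|\vec h|^2\gg|\calC_n|$, so a birthday-style argument shows that generically there exist pairs $h_i,h_j\in\vec h^\pm$ whose product cyclically reduces to a word of length at most $(1-\delta)n$ for some $\delta>0$. Iterating the construction (each iteration uses the short elements just produced as new generators to shorten further) shows that $\langle\vec h\rangle$ contains every sufficiently short reduced word, hence equals $F(A)$ or its unique index-$2$ subgroup (the one distinguishing generators from their inverses by a parity). Part~(4), namely that at $d<\frac12$ the group is generically infinite and hyperbolic, is the classical Gromov density theorem, and I would invoke Ollivier's proof~\cite{2004:Ollivier,2005:Ollivier}: the core ingredient is an isoperimetric inequality for reduced van Kampen diagrams, stating that the boundary length is at least a $(1-2d-\epsilon)$-fraction of the total face perimeter. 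This is the main obstacle, since the factor-counting arguments of parts~(1)--(3) cannot see diagrams of nontrivial topology, and controlling all reduced diagrams simultaneously requires genuinely more combinatorial and geometric input than is needed elsewhere in the theorem.
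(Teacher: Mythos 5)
The paper does not actually prove Theorem~\ref{thm: phase transitions}: it states it as a survey of known results with citations to Ol'shanskii, Champetier and Ollivier, and the text immediately following remarks only that parts (1)--(3) follow from counting arguments while part (4) is the ``hard part''. So your proposal supplies more than the paper attempts, but your overall decomposition --- moment and concentration arguments for (1)--(3), delegate (4) to Ollivier's isoperimetric inequality --- matches the paper's description, and matches the route the paper itself takes when it re-derives and generalizes parts of this theorem in Section~\ref{sec: applications to uniform} and in Theorem~\ref{thm: phase}.

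Two corrections are in order. The second assertion in part~(1) is deterministic, not merely generic: $\vec h^\pm$ exhibits at most $2n|\vec h| = O(n(2r-1)^{dn})$ cyclic factors of length $\beta n$, which is exponentially smaller than $|\calC_{\beta n}| = \Theta((2r-1)^{\beta n})$ when $\beta > d$, so by pigeonhole some word is always missed; the variance estimate you invoke is neither needed nor meaningful here. More seriously, the ``iterating the construction'' step in part~(3) has a real gap: after the first birthday step the short relators you obtain are no longer independent, no longer uniformly distributed, and not obviously numerous enough to support a second birthday argument, so the induction as stated is not justified. The argument that actually works (and which the paper generalizes to the Markovian setting via Propositions~\ref{prop: getting degenerate subgroups} and~\ref{prop: degenerate subgroups}) does everything in one pass: for each pair of letters $a,b\in\tilde A$ one fixes short words $x,y$ of equal length ending in $a$ and $b$ respectively, and shows by a second-moment (Szpankowski-type) argument together with Chernoff bounds that, exponentially generically, $\vec h$ contains a pair of the form $(wx,wy)$ with $w$ a long common prefix; this gives $x =_G y$ and then $a =_G b$ by cancelling letter by letter (each cancellation step requiring another such coincidence). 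Once every letter of $\tilde A$, including $a$ and $a^{-1}$, is identified in $G$, the quotient is generated by a single element of order dividing $2$, and the parity of $n$ decides whether $G$ is trivial or $\Z/2\Z$. Finally, the index-$2$ subgroup of $F(A)$ that appears is the kernel of the length-parity homomorphism, not one ``distinguishing generators from their inverses'': $a$ and $a^{-1}$ always have the same length parity.
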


Properties (1)-(3) in Theorem~\ref{thm: phase transitions} are obtained by counting arguments. Property (4) is the ``hard part'' of the theorem, where hyperbolicity does not follow from a small cancellation property.

As pointed out by Ollivier \cite[Sec. I.2.c]{2005:Ollivier}, the statement of Theorem~\ref{thm: phase transitions} still holds if a tuple of cyclically reduced elements is chosen uniformly at random at density $d$ in the $n$-ball rather than in the $n$-sphere (that is, it consists of words of length at most $n$). We will actually verify this fact again in Section~\ref{sec: applications to uniform}.

\section{A general probabilistic model}\label{sec: proba model}

We introduce a fairly general probabilistic model, which generalizes both the few-generator and the density models.

\subsection{Prefix-heavy sequences of measures on reduced words}
For every reduced word $u \in \Red $, let $\Pref(u)$ be the set of all reduced
words $v$ of which $u$ is a prefix (that is: $\Pref(u) = u\tilde A^* \cap \Red$). Let also $\Pref_{n}(u)$ be
the set $\Red_{n}\cap\Pref(u)$. The notation $\Pref$ can also be extended to a set $U$ of reduced words: $\Pref(U) = \bigcup_{u\in U}\Pref(u)$.

Let $(\RR_{n})_{n\geq 0}$ be a sequence of probability measures on $\Red$ and let $C\geq 1$ and $\alpha\in(0,1)$. We say that
the sequence  $(\RR_{n})_{n\geq 0}$ is a \emph{prefix-heavy sequence of measures on $\Red$} of parameters $(C,\alpha)$ if:
\begin{enumerate}
\item for every $n\geq 0$, the support of the measure $\RR_{n}$ is included
in $\Red_{n}$;

\item for every $n\geq 0$ and for every $u\in\Red$, if $\RR_{n}(\Pref(u))\neq 0$
then for every $v\in\Red$
\[
\RR_{n}\big(\Pref(uv)\mid\Pref(u)\big) \leq C \alpha^{|v|}.
\]
\end{enumerate}
This prefix-oriented definition is rather natural if one thinks of a source as generating reduced words from left to right, as is usual in information theory. 

\begin{remark}
Taking $u=\epsilon$ in the definition yields
$\RR_{n}\big(\Pref(v)\big) \leq C \alpha^{|v|}$. For
$n = |v|$, we have $\Pref(v)\cap\Red_{n}=\{v\}$, so the probability
of $v$ decreases exponentially with the length of $v$.
\end{remark}

\begin{example}\label{ex: uniform distribution on words}
The sequence of uniform distributions on $\Red_{n}$ is a prefix-heavy
sequence of measures with parameters $C=1$ and $\alpha=\frac1{2r-1}$. Indeed, if $u$ is a reduced word of length at most $n\geq 0$ (for a longer $u$, $\RR_{n}(\Pref(u))=0$), and if $uv$ is reduced, we have
\[
\RR_{n}\big(\Pref(uv)\mid\Pref(u)\big) =
\begin{cases}
\frac1{(2r-1)^{|v|}} & \text{if }|u|+|v|\leq n \text{ and }u\neq\epsilon,\\
\frac1{2r(2r-1)^{|v|-1}} & \text{if }|v|\leq n\text{ and } u=\epsilon,\\
0 & \text{otherwise}.
\end{cases}
\]
\end{example}

\begin{example}
By a similar computation, one verifies that the sequence of uniform distributions on $\C_{n}$, the cyclically reduced words, is also a prefix-heavy sequence of measures, with parameters  $C = \frac{2r-1}{2r-2}$ and $\alpha = \frac1{2r-1}$ (see Section~\ref{sec: free groups}).
\end{example}

For the rest of this section, we fix a  sequence of measures $(\RR_{n})_{n\geq 0}$ on $\Red$, which is prefix-heavy with parameters $(C,\alpha)$. All probabilities refer to this sequence, that is: the probability of a subset of $\Red_n$ is computed according to $\RR_n$.

\begin{remark}
If $X$ and $Y$ are subsets of $\Red$, the notation $\RR_n(X \mid Y)$ is technically defined only if $\RR_n(Y) \ne 0$. To avoid stating cumbersome hypotheses, we adopt the convention that $\RR_n(X \mid Y)\ \RR_n(Y) = 0$ whenever $\RR_n(Y) = 0$.
\end{remark}

\subsection{Repeated factors in random reduced words}\label{sec: repeated factors}

Let us first evaluate the probability of occurrence of prescribed, non-overlapping factors in a reduced word. Let $m \ge 0$, $\vec v = (v_1,\ldots,v_m)$ be a vector of non-empty reduced words and $\vec\imath = (i_1,\ldots,i_m)$ be a vector of integers. We denote by $E(\vec v,\vec\imath)$ denote the set of reduced words of length $n$, admitting $v_j$ as a factor at position $i_j$  for every $1\leq j\leq m$ (if $m = 0$, then $E(\vec v,\vec\imath) = \Red$). If $n\ge 1$, we also write $E_n(\vec v,\vec\imath)$ for $E(\vec v,\vec\imath) \cap \Red_n$.

\begin{lemma}\label{lm: factors at given positions}
Let $\vec v = (v_1,\ldots,v_m)$ be  a sequence of non-empty reduced words and $\vec\imath = (i_1,\ldots,i_m)$ be a sequence of integers satisfying
$$1\leq i_1 < i_1 + |v_1| \leq i_2 < i_2 + |v_2| \leq \ldots \leq i_m + |v_m|\leq n.$$
Then
the following inequality holds:
\[
\RR_{n}\left(E(\vec v,\vec\imath)\right) \leq C^{m}  \alpha^{|v_1v_2\cdots v_m|}.
\]
In addition, if $m\ge 1$ and $\vec x = (v_1,\ldots,v_{m-1})$ and $\vec\jmath = (i_1,\ldots,i_{m-1})$, then
$$\RR_{n}(E(\vec v,\vec\imath)) \le C\alpha^{|v_m|} \RR_n(E(\vec x,\vec\jmath)).$$
\end{lemma}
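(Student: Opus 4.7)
The plan is to prove the second (``in addition'') statement first, and then derive the first inequality by an immediate induction on $m$, multiplying the factors $C\alpha^{|v_j|}$ one at a time. The base case $m = 0$ of the induction is trivial since $E(\vec v, \vec \imath) = \Red$ has probability $1$ under $\RR_n$ and $C^0 \alpha^0 = 1$; for $m = 1$ the bound already follows from prefix-heaviness (with $u = \epsilon$, as spelled out in the remark following the definition).

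For the ``in addition'' part, the key move is to slice $E(\vec v, \vec\imath)$ according to the prefix of length $i_m - 1$. More precisely, let $U$ be the set of reduced words $u$ of length $i_m - 1$ that contain $v_j$ at position $i_j$ for every $j < m$ and for which $u v_m$ is reduced, and let $U'$ be the same set but without the reducedness condition on $u v_m$. Because the positions $i_j$ satisfy $i_j + |v_j| \le i_m$, every word in $E(\vec x, \vec \jmath) \cap \Red_n$ has its length $i_m - 1$ prefix in $U'$, and conversely $\Pref(u) \cap \Red_n \subseteq E(\vec x, \vec\jmath)$ for each $u \in U'$. This yields the disjoint decomposition
\[
E(\vec x, \vec\jmath) \cap \Red_n \;=\; \bigsqcup_{u \in U'} \Pref(u) \cap \Red_n.
\]
Similarly, a word in $E(\vec v, \vec\imath) \cap \Red_n$ has its length $i_m - 1 + |v_m|$ prefix equal to $u v_m$ for some $u \in U$, giving
\[
E(\vec v, \vec\imath) \cap \Red_n \;=\; \bigsqcup_{u \in U} \Pref(uv_m) \cap \Red_n.
\]

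Next I would apply prefix-heaviness to each term of the second decomposition: for $u \in U$ (using the convention $\RR_n(\,\cdot\mid\Pref(u))\,\RR_n(\Pref(u)) = 0$ when $\RR_n(\Pref(u)) = 0$),
\[
\RR_n(\Pref(u v_m)) \;=\; \RR_n(\Pref(uv_m) \mid \Pref(u))\,\RR_n(\Pref(u)) \;\le\; C \alpha^{|v_m|} \RR_n(\Pref(u)).
\]
Summing over $u \in U$, then enlarging the sum to $u \in U'$, and recognizing the first decomposition, gives
\[
\RR_n(E(\vec v, \vec\imath)) \;\le\; C\alpha^{|v_m|} \sum_{u \in U'} \RR_n(\Pref(u)) \;=\; C\alpha^{|v_m|} \RR_n(E(\vec x, \vec\jmath)),
\]
which is the claim. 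Iterating this inequality $m$ times then produces the bound $C^m \alpha^{|v_1 v_2 \cdots v_m|}$.

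The only mild subtlety, and thus the ``hard part'' of writing this up cleanly, is checking that the two set-theoretic decompositions are genuine partitions into sets of the form $\Pref(\cdot) \cap \Red_n$, which requires the hypothesis $i_j + |v_j| \le i_{j+1}$ (so that the factors $v_j$ for $j < m$ are determined by the length $i_m - 1$ prefix) and $i_m - 1 + |v_m| \le n$ (so that $u v_m$ is an actual prefix of a word in $\Red_n$). Everything else is straightforward bookkeeping.
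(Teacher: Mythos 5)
Your proof is correct and follows essentially the same route as the paper: both decompose $E(\vec v,\vec\imath)$ along the length-$(i_m-1)$ prefix, apply prefix-heaviness termwise to pass from $\Pref(uv_m)$ to $\Pref(u)$, and sum; your sets $U$ and $U'$ are just a more explicit rendering of the paper's $E_{i_m-1}(\vec x,\vec\jmath)$ (where the convention $\Pref(w)=\emptyset$ for $w$ non-reduced silently handles the distinction). The only cosmetic difference is that you prove the ``in addition'' step first and iterate, whereas the paper phrases it directly as the inductive step of the first inequality.
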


\begin{proof}
The proof is by induction on $m$ and the case $m = 0$ is trivial. We now assume that $m \ge 1$ and that the inequality holds for vectors of length $m-1$. Since $(\RR_n)_n$ is prefix-heavy, we have
$$\RR_n(\Pref(uv_m)) = \RR_n(\Pref(uv_m) \mid \Pref(u))\ \RR_n(\Pref(u)) \le C\alpha^{|v_m]} \RR_n(\Pref(u))$$
for each $u$. Since $E(\vec v,\vec\imath) = \Pref(E_{i_{m}-1}(\vec x,\vec\jmath) v_{m})$, summing the previous inequality over all $u\in E_{i_{m}-1}(\vec x,\vec\jmath)$ yields
$$\RR_n(E(\vec v,\vec\imath)) \le C\alpha^{|v_m|} \RR_n(\Pref(E_{i_{m}-1}(\vec x,\vec\jmath))) = C\alpha^{|v_m|} \RR_n(E(\vec x,\vec\jmath))$$
since $n \ge i_m+|v_m|$. This concludes the proof.
\end{proof}

\begin{corollary}\label{lm: factors}
Let $v_1,\ldots,v_m$ be non-empty reduced words. The probability that a word of length $n$ admits $v_{1}, \ldots, v_m$ in that order as non-overlapping factors, is at most
$C^{m}  n^{m}  \alpha^{|v_1\cdots v_m|}$.
\end{corollary}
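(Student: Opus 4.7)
My plan is to view the event in question as a union, indexed by admissible position vectors, of the events $E(\vec v,\vec\imath)$ controlled by Lemma~\ref{lm: factors at given positions}, and then apply the union bound together with a crude counting estimate on the number of positions.

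More precisely, let $X_n \subseteq \Red_n$ denote the set of reduced words of length $n$ that admit $v_1,\ldots,v_m$ in that order as non-overlapping factors. By the definition of non-overlapping occurrences in order, we have
\[
X_n \;=\; \bigcup_{\vec\imath \in I_n} E_n(\vec v,\vec\imath),
\]
where $I_n$ is the set of integer vectors $\vec\imath = (i_1,\ldots,i_m)$ satisfying $1 \le i_1 < i_1 + |v_1| \le i_2 < \cdots \le i_m + |v_m| \le n$. The union bound then yields
\[
\RR_n(X_n) \;\le\; \sum_{\vec\imath \in I_n} \RR_n\bigl(E(\vec v,\vec\imath)\bigr).
\]

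Next, Lemma~\ref{lm: factors at given positions} applies to every $\vec\imath \in I_n$ (since admissibility in $I_n$ is exactly the hypothesis of that lemma) and gives $\RR_n(E(\vec v,\vec\imath)) \le C^{m} \alpha^{|v_1 v_2 \cdots v_m|}$, a bound independent of $\vec\imath$. It remains to bound $|I_n|$. Each coordinate $i_j$ lies in $\{1,\ldots,n\}$, so the trivial estimate $|I_n| \le n^m$ is enough (one could sharpen this, but any polynomial bound of degree $m$ in $n$ suffices).

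Combining these two inequalities gives
\[
\RR_n(X_n) \;\le\; n^m \cdot C^m \alpha^{|v_1\cdots v_m|},
\]
which is the desired bound. There is no real obstacle here: the only mildly delicate point is checking that the ordering and non-overlap constraints defining $X_n$ match exactly the hypotheses on $\vec\imath$ required by Lemma~\ref{lm: factors at given positions}, so that the lemma applies uniformly across the union. Everything else is a union bound and a crude count of positions.
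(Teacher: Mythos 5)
Your proof is correct and matches the paper's argument exactly: both decompose the event over admissible position vectors, apply Lemma~\ref{lm: factors at given positions} to each piece, and then use the union bound together with the crude estimate that there are at most $n^m$ position vectors. The paper compresses this into "summing over all possible position vectors," and your write-up just spells out the same steps.
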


\begin{proof}
This is a direct consequence of Lemma~\ref{lm: factors at given positions}, summing over all possible position vectors.
\end{proof}

We now consider repeated non-overlapping occurrences of factors of a prescribed length.

\begin{lemma}\label{lm: factor repeated twice}
Let $1 \le i,j,t \le n$ be such that $i+t \le j$. The probability that a word of length $t$ occurs (resp. a word of length $t$ and its inverse occur) at positions $i$ and $j$ in a reduced word of length $n$ is at most equal to $C \alpha^t$.

The probability that a reduced word of length $n$ has two non-overlapping occurrences of a factor of length $t$ (resp. occurrences of a factor of length $t$ and its inverse) is at most equal to $Cn^2\alpha^t$.
\end{lemma}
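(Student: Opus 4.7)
The plan is to establish the fixed-position bound by conditioning on which word of length $t$ actually occurs at position $i$, and then derive the second statement by a direct union bound over positions.

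For the first statement, I fix positions $i,j$ with $i+t\le j$ (if $j+t-1>n$ the event is empty and the inequality is trivial). The event ``some word of length $t$ occurs at both positions $i$ and $j$'' partitions as the disjoint union, indexed by $v\in\Red_t$, of the events $E((v,v),(i,j))$. For each such $v$, the second (inductive) inequality of Lemma~\ref{lm: factors at given positions} applied with $\vec v=(v,v)$ and $\vec\imath=(i,j)$ yields
$$\RR_n(E((v,v),(i,j)))\ \le\ C\alpha^{t}\,\RR_n(E((v),(i))).$$
Summing over $v\in\Red_t$ and observing that the events $E((v),(i))$ are pairwise disjoint (a reduced word of length $n$ has at most one factor of length $t$ starting at position $i$), the total $\sum_v\RR_n(E((v),(i)))$ is at most $1$, which gives the target bound $C\alpha^t$. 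The inverse-occurrence variant is identical: replace the second copy of $v$ by $v\inv$, which is again a reduced word of length $t$, and repeat the computation.

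For the second statement, apply the union bound over all admissible pairs $(i,j)$ with $1\le i$, $i+t\le j$ and $j+t-1\le n$. There are at most $n^2$ such pairs, and each contributes at most $C\alpha^t$ by the first statement, yielding $Cn^2\alpha^t$. The only subtle point is the choice of inequality from Lemma~\ref{lm: factors at given positions}: the plain bound $\RR_n(E((v,v),(i,j)))\le C^2\alpha^{2t}$ would force summing $\alpha^{2t}$ over the exponentially many $v\in\Red_t$, which is wasteful. Using the inductive version instead leaves a single factor $\alpha^t$ multiplied by a residual one-factor probability that integrates to at most~$1$; this is where the bound becomes tight, and I anticipate no further obstacles.
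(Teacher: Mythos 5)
Your proof is correct and follows essentially the same route as the paper's: decompose the fixed-position event as a disjoint union over $v\in\Red_t$ of $E((v,v),(i,j))$, apply the inductive inequality of Lemma~\ref{lm: factors at given positions} to peel off a factor $C\alpha^t$, and note that the remaining one-factor probabilities sum to (at most) $1$; then sum over positions for the second statement. The paper phrases the last step as the $E_n((v),(i))$ forming a partition of $\Red_n$ rather than "$\le 1$," but this is an immaterial difference.
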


\begin{proof}
Let $E_n(t,i,j)$ be the set of reduced words of length $n$ in which the same factor of length $t$ occurs at positions $i$ and $j$. Then $E_n(t,i,j)$ is the disjoint union of the sets $E_n((v,v),(i,j))$, where $v$ runs over $\Red_t$.  By Lemma~\ref{lm: factors at given positions}, we have
$$\RR_n(E_n(t,i,j)) = \sum_{v\in\Red_t}\RR_n(E((v,v),(i,j))) \le C\alpha^t \sum_{v\in\Red_t}\RR_n(E((v),(i))) = C\alpha^t,$$
where the last equality is due to the fact that the $E_n((v),(i))$ form a partition of $\Red_n$ when $v$ runs over $\Red_t$.

The same reasoning applied to the vectors $(v,v\inv)$ yields the analogous inequality for words containing non-overlapping occurrences of a word and its inverse.

The last part of the statement follows by summing over all possible values of $i$ and $j$.
\end{proof}

Applying Lemma~\ref{lm: factor repeated twice} with $i = 1$ and $j = n-t+1$, we get the following useful statement.

\begin{corollary}\label{lm: cyclically reduced}
For every positive integers $n,t$ such that $n > 2t$, the probability that a reduced word $u\in \Red_{n}$ is of the form $vwv\inv$, for some word $v$ of length $t$, is at most $C \alpha^{t}$.
\end{corollary}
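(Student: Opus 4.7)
The plan is to observe that the event in question is a very special case of the repeated-factor event already handled by Lemma~\ref{lm: factor repeated twice}, with prescribed positions $i=1$ and $j=n-t+1$. Writing $u=vwv\inv$ with $|v|=t$ forces $v$ to occur at position $1$ and $v\inv$ to occur at position $n-t+1$ in $u$; these are exactly the positions of a prefix and a suffix of length $t$ of an $n$-letter word. So the set of words of the form $vwv\inv$ with $|v|=t$ coincides with the set of reduced words of length $n$ in which some length-$t$ word and its inverse occur at positions $1$ and $n-t+1$ respectively.

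Next I would check the non-overlap condition needed to apply Lemma~\ref{lm: factor repeated twice}: we need $i+t\le j$, i.e. $1+t\le n-t+1$, which is $2t\le n$, and this is guaranteed by the hypothesis $n>2t$. Applying the ``resp.'' clause of the first part of Lemma~\ref{lm: factor repeated twice} (the one about the occurrence of a length-$t$ factor and of its inverse at fixed positions) directly gives the upper bound $C\alpha^t$ on the probability of the event in question, which is precisely what the corollary claims.

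There is no real obstacle here: the statement is a one-line specialization of the preceding lemma, chosen to match the archetypal situation where cyclic non-reducedness (a non-trivially conjugate structure) of a reduced word is witnessed by a repeated prefix/suffix pair. The only slight care needed is to interpret ``a word of the form $vwv\inv$'' correctly as an existential statement over all $v\in\Red_t$, so that the event decomposes as the disjoint union (already summed over in the proof of Lemma~\ref{lm: factor repeated twice}) of the events $E_n((v,v\inv),(1,n-t+1))$ for $v\in\Red_t$. Once this is noted, no further computation is required.
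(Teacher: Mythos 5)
Your proposal is correct and matches the paper's own (one-line) proof exactly: the paper likewise obtains the corollary by applying the ``resp.'' clause of Lemma~\ref{lm: factor repeated twice} with $i=1$ and $j=n-t+1$, noting that the hypothesis $n>2t$ guarantees the non-overlap condition $i+t\le j$.
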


Finally, we also estimate the probability that a word has two overlapping occurrences of a factor. Note that we do not need to consider overlapping occurrences of a word $v$ and its inverse, since a reduced word cannot overlap with its inverse.

\begin{lemma}\label{lm: overlapping factors}
Let $1 \le t < n$. The probability that a reduced word of length $n$ has overlapping occurrences of a factor of length $t$ is at most $Cnt\alpha^t$.
\end{lemma}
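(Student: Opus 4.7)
The plan is to decompose the event by the positions of the two overlapping occurrences. A reduced word of length $n$ has overlapping occurrences of a length-$t$ factor if and only if there are indices $i \ge 1$ and $1 \le d < t$ with $i + t + d - 1 \le n$ such that the length-$t$ factors at positions $i$ and $i+d$ coincide; the number of such pairs $(i,d)$ is at most $n(t-1)$, so a union bound reduces matters to proving, for each fixed $(i,d)$, that the probability of this coincidence is at most $C\alpha^t$.

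For fixed $(i,d)$, the coincidence is equivalent to the window of length $t+d$ at position $i$ being periodic with period $d$; such a window is entirely determined by its first $d$ letters $w \in \Red_d$, which force the next $t$ letters to equal a prescribed periodic extension $\tilde w \in \Red_t$. I would therefore write the coincidence event as the disjoint union, over $w \in \Red_d$, of the events asserting that the length-$d$ factor at position $i$ equals $w$ and the length-$t$ factor at position $i+d$ equals $\tilde w$.

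For each such $w$, this event is itself the disjoint union, over prefixes $u \in \Red_{i-1}$, of the sets $\Pref(uw\tilde w)$. Applying prefix-heaviness gives $\RR_n(\Pref(uw\tilde w)) \le C\alpha^t\, \RR_n(\Pref(uw))$, and summing over $u$ bounds the probability of this event by $C\alpha^t \, p_w$, where $p_w$ denotes the probability that the length-$d$ factor at position $i$ equals $w$. Since the events indexed by distinct $w$ are pairwise disjoint, $\sum_{w\in\Red_d} p_w \le 1$; summing over $w$ then gives $C\alpha^t$ for the coincidence probability at $(i,d)$, and a final union bound over $(i,d)$ yields $Cnt\alpha^t$.

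The subtlety I expect to have to navigate is that the more obvious union bound, ranging directly over reduced period-$d$ words $W$ of length $t+d$, would incur a combinatorial count of order $|\calC_d|$, and prefix-heaviness alone gives no uniform upper bound on $|\calC_d|\alpha^d$. The right move is to index the disjoint decomposition by the \emph{first} $d$ letters $w$ rather than by the whole periodic extension $w\tilde w$; the combinatorial sum over $w$ then becomes a sum of disjoint-event probabilities bounded by $1$, absorbing what would otherwise be an uncontrolled combinatorial factor.
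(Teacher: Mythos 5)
Your argument is correct and is essentially the paper's proof: you identify that overlapping occurrences at offset $d$ force the length-$(t+d)$ window to be $d$-periodic, hence determined by its first $d$ letters, and then apply prefix-heaviness and a partition-by-prefix sum to absorb the combinatorial factor. The paper phrases the periodicity step via the classical $v=x^s y$ lemma and invokes Lemma~\ref{lm: factors at given positions} rather than unwinding the prefix-heavy bound directly, but the decomposition, the disjointness trick, and the resulting bound are identical.
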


\begin{proof}
If a word $v$ overlaps with itself, more precisely, if $xv = vz$ for some words $x,z$ such that $0 < |x| = |z| < |v|$, then it is a classical result from combinatorics on words that $v = x^sy$ where $s = \left\lfloor \frac{|v|}{|x|} \right\rfloor \ge 1$ and $y$ is the prefix of $x$ of length $|v| - s|x|$ (see Figure~\ref{fig:schutz}).
\begin{figure}[htbp]
\centering
\includegraphics{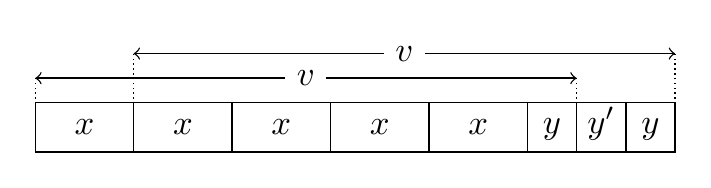} 
\caption{A classical result from  combinatorics of words: if $xv=vz$ with $0 < |x| < |v|$, then $v$ is of the form $v=x^{s}y$ for some positive integer $s$ and some prefix $y$ of $x$.\label{fig:schutz}} 
\end{figure}

It follows that, if a reduced word $u$ has (overlapping) occurrences of a factor $v$ of length $t$ at positions $i$ and $j$ ($j < i + t$), then $u$ admits a factor of the form $xv$ at position $i$, where $x$ is the prefix of $v$ of length $j-i$. Note that, once $t$ and $j-i$ are fixed, $v$ is entirely determined by $x$. Therefore this occurs with probability
$$P \le \sum_{i=1}^n \sum_{j=i+1}^{i+t-1} \sum_{x\in \Red_{j-i}} \RR_n(E((xv),(i))) = \sum_{i=1}^n \sum_{j=i+1}^{i+t-1} \sum_{x\in \Red_{j-i}} \RR_n(E((x,v),(i,j))).$$
It follows that
$$P \le \sum_{i=1}^n \sum_{j=i+1}^{i+t-1}C\alpha^t  \sum_{x\in \Red_{j-i}} \RR_n(E((x),(i))) = \sum_{i=1}^n \sum_{j=i+1}^{i+t-1} C\alpha^t \le Cnt\alpha^t.$$
by Lemma~\ref{lm: factors at given positions} and using the fact that the $E_n((x),(i))$ form a partition of $\Red_n$ when $x$ runs over $\Red_{j-i}$.
\end{proof}

\subsection{Repeated cyclic factors in random reduced words}\label {sec: repeated cyclic factors}

A word $v$ is a \emph{cyclic factor} of a word $u$ if either $u\in \tilde A^*v\tilde A^*$, or $v = v_1v_2$ and $u\in v_2\tilde A^*v_1$ -- in which case we say that $v$ is a \emph{straddling factor}. For now, we only assume that $u$ is reduced, but we will be ultimately interested in the cyclically reduced case, see Corollary~\ref{prop: repeated cyclic factors in cyclically reduced words}.

\begin{lemma}\label{lm: general factors}
Let $1 \le i, t \le n$ such that $i + t \leq n$ and let $v$ be reduced word $v$ of length $t$. Then the probability that $v$ is a cyclic factor at position $i$ of an element of $\Red_{n}$, is at most $(Cn + C^2t)\alpha^t \le 2C^2n\alpha^t$.
\end{lemma}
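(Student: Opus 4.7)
The plan is to bound the probability of ``$v$ appears as a cyclic factor of a random $u \in \Red_n$'' by splitting along the two clauses of the definition of cyclic factor: either $v$ is an ordinary factor of $u$ (the clause $u \in \tilde A^* v \tilde A^*$), or $v$ straddles the boundary, meaning $v = v_1 v_2$ with $u \in v_2 \tilde A^* v_1$ and $|v_1|,|v_2| \ge 1$. I will control each of the two contributions by a union bound powered by Lemma~\ref{lm: factors at given positions}, so that the regular-factor cases produce the $Cn\alpha^t$ summand and the straddling cases produce the $C^2 t\alpha^t$ summand.

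For the ordinary-factor contribution, I would write the event as the union, over starting positions $j \in \{1,\ldots,n-t+1\}$, of the events ``$v$ occurs at position $j$''. Lemma~\ref{lm: factors at given positions} applied to the singleton tuple $\vec v = (v)$ at $\vec\imath = (j)$ bounds each such probability by $C\alpha^t$, and a union bound over the at most $n$ starting positions yields at most $Cn\alpha^t$ for this contribution.

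For the straddling contribution, I would enumerate over the $t-1$ non-trivial splits $v = v_1 v_2$ with $|v_1|,|v_2| \ge 1$. For each such split, the event requires $v_2$ to appear at position $1$ and $v_1$ at position $n-|v_1|+1$; since $|v_1|+|v_2| = t \le n$, the two prescribed factor positions satisfy $1+|v_2| \le n-|v_1|+1$, so Lemma~\ref{lm: factors at given positions} applies to the pair $\vec v = (v_2,v_1)$ at $\vec\imath = (1,n-|v_1|+1)$ and bounds each split's probability by $C^2\alpha^{|v_1|+|v_2|} = C^2\alpha^t$. Summing over the $t-1$ splits produces at most $C^2(t-1)\alpha^t \le C^2 t\alpha^t$. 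Adding the ordinary and straddling contributions yields the claimed bound $(Cn+C^2 t)\alpha^t$, and the weaker inequality $(Cn+C^2 t)\alpha^t \le 2C^2 n\alpha^t$ follows from $C \ge 1$ and $t \le n$.

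I do not foresee a genuine obstacle here: once the cyclic-factor event has been partitioned into ordinary and straddling occurrences, each case is a direct application of Lemma~\ref{lm: factors at given positions}. The only mild subtlety is checking the non-overlap/ordering hypothesis of that lemma for the pair $(v_2,v_1)$ in the straddling case, which is immediate from $|v_1|+|v_2| = t \le n$.
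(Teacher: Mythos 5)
Your proof is correct and follows essentially the same route as the paper: you decompose the cyclic-factor event into ordinary and straddling occurrences, bound the former via the position-by-position union bound (which is precisely how the paper's Corollary~\ref{lm: factors} for $m=1$ is obtained) and bound each of the $t-1$ straddling splits by $C^2\alpha^t$ via Lemma~\ref{lm: factors at given positions}. The only cosmetic difference is that the paper invokes Corollary~\ref{lm: factors} for the non-straddling contribution rather than inlining the union bound, and it labels the split as $v = v_2 v_1$ rather than your $v = v_1 v_2$, but the arguments are the same.
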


\begin{proof}
The probability that $v$ occurs as a (regular) factor of an element of $\Red_n$ is at most $Cn\alpha^t$ by Corollary~\ref{lm: factors}.

On the other hand, $v$ occurs as a straddling factor of $u \in \Red_n$ if $v = v_2v_1$, with $1\le \ell = |v_2| < t$ and $u\in v_1 \tilde A^* v_2$, that is, $u \in E((v_1,v_2),(1,n-\ell+1))$. By Lemma~\ref{lm: factors at given positions}, this happens with probability at most $C^2\alpha^t$. Summing over the possible values of $\ell$, we find that that $v$ occurs as a straddling factor of an element of $\Red_n$ with probability at most $C^2t\alpha^t$.

Therefore the probability that $v$ occurs in $u$ as a cyclic factor is at most $(Cn + C^2t)\alpha^t$, as announced.
\end{proof}

We now consider multiple occurrences of cyclic factors of a given length. 

\begin{lemma}\label{lm: multiple no overlap}
Let $1 \le t < n$. The probability that a reduced word of length $n$ has two non-overlapping occurrences of a cyclic factor of length $t$ (resp. an occurrence of a cyclic factor of length $t$ and its inverse), is at most $(Cn^2 + C^2nt)\alpha^t \le 2C^2n^{2}  \alpha^{t}$.
\end{lemma}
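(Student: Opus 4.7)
The plan is to mirror the proof of Lemma~\ref{lm: factor repeated twice}, splitting the event according to the type (regular or straddling) of each of the two cyclic occurrences. Two straddling length-$t$ cyclic occurrences necessarily share positions $1$ and $n$ of $u$ (each covers position $n$ because its split $\ell$ satisfies $\ell\geq 1$, and position $1$ because $\ell\leq t-1$ gives $t-\ell\geq 1$), hence always overlap; the non-overlap hypothesis thus rules this case out. The case of two regular occurrences is handled verbatim by Lemma~\ref{lm: factor repeated twice}, contributing at most $Cn^2\alpha^t$, and this argument also covers the factor-and-its-inverse variant.

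The substantive case is the mixed one: a regular occurrence at some position $i\in[1,n-t+1]$ together with a straddling occurrence determined by a split $\ell\in[1,t-1]$ (cyclic starting position $n-\ell+1$). The non-overlap hypothesis forces $t-\ell+1\leq i\leq n-t-\ell+1$, leaving at most $n(t-1)\leq nt$ admissible pairs. The equality of the two length-$t$ cyclic factors rewrites as the conjunction $u[1..t-\ell]=u[i+\ell..i+t-1]$ and $u[i..i+\ell-1]=u[n-\ell+1..n]$; introducing the common prefix $v_1\in\Red_{t-\ell}$ and common suffix $v_2\in\Red_\ell$, the event $E_{i,\ell}$ decomposes as the conjunction of four factor occurrences labelled $v_1,v_2,v_1,v_2$ at positions $1,i,i+\ell,n-\ell+1$, which are in order and non-overlapping by those constraints.

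The core of the proof is the split-independent estimate $\P(E_{i,\ell})\leq C^2\alpha^t$, which I plan to obtain by alternating Lemma~\ref{lm: factors at given positions} with marginalisation steps: peel off the last factor at position $n-\ell+1$ to gain $C\alpha^\ell$; marginalise over $v_2\in\Red_\ell$, which makes the middle factor $u[i..i+\ell-1]=v_2$ vanish since $u[i..i+\ell-1]$ is always some element of $\Red_\ell$; peel off the remaining coincidence of the two $v_1$-labelled factors for another $C\alpha^{t-\ell}$; and finally sum $\P(u[1..t-\ell]=v_1)$ over $v_1\in\Red_{t-\ell}$ to obtain $1$. Summing the resulting bound $C^2\alpha^t$ over the at most $nt$ admissible pairs yields $C^2nt\,\alpha^t$, so combined with case (a) the total probability is at most $(Cn^2+C^2nt)\alpha^t\leq 2C^2n^2\alpha^t$, using $t\leq n$ and $C\geq 1$. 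The factor-and-its-inverse variant follows from exactly the same computation, using the bijections $v\mapsto v^{-1}$ on $\Red_{t-\ell}$ and $\Red_\ell$ to carry out the marginalisations even when one of the two cyclic occurrences reads as $v^{-1}$ rather than $v$.

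The step I expect to be delicate is the split-independent bound on $\P(E_{i,\ell})$. A naive parametrisation that fixes $v_1,v_2$, applies Lemma~\ref{lm: factors at given positions} once to get $C^4\alpha^{2t}$, and then sums directly gives a bound of order $|\Red_{t-\ell}|\cdot|\Red_\ell|\cdot\alpha^{2t}\sim(2r-1)^t\alpha^{2t}$, which is only $O(\alpha^t)$ in the uniform case $\alpha=1/(2r-1)$ and is too weak in general, since the only universal lower bound available for prefix-heavy measures is $\alpha\geq 1/(2r-1)$. The alternation of peeling and marginalisation is what avoids any counting of reduced words and collects exactly one factor of $\alpha$ per letter.
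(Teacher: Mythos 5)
Your proof is correct and follows essentially the same route as the paper: same case split (two regular occurrences handled by Lemma~\ref{lm: factor repeated twice}, two straddling ones impossible, mixed case via an explicit $E(\cdot,\cdot)$ decomposition), the same key estimate $C^2\alpha^t$ per pair $(\ell,i)$ obtained by combining Lemma~\ref{lm: factors at given positions} with summing over a full partition of $\Red_n$, and the same final bookkeeping. The only difference is cosmetic: you parametrise the straddling occurrence by the length of its suffix piece rather than its prefix piece, and you strictly alternate peel-then-marginalise whereas the paper peels both factors before marginalising in the same-word case (it uses your exact alternation in the inverse case); both orders give the identical bound $(Cn^2+C^2nt)\alpha^t$.
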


\begin{proof}
Again there are several cases, depending whether the occurrences of the word (or the word and its inverse) are both standard factors, or one of them is straddling. 

The probability that a reduced word $u\in \Red_n$ admits two non-overlapping occurrences of a (standard) factor of length $t$ (resp. occurrences of a factor of length $t$ and its inverse), is at most $C n^2 \alpha^t$ by Lemma~\ref{lm: factor repeated twice}.

We now consider the situation where $u$ has two occurrences of the same word of length $t$, one as a standard factor and one straddling: there exist integers $\ell,i$ and reduced words $v_1,v_2$ such that $0 < \ell < t$, $\ell \le i \le n-2t+\ell$, $|v_2| = \ell$, $|v_1v_2| = t$ and
$$u\in  E((v_2,v_1v_2,v_1),(1,i,n-t+\ell+1)) = E((v_2,v_1,v_2,v_1),(1,i,i+\ell,n-t+\ell+1)).$$
Applying Lemma~\ref{lm: factors at given positions} twice, we find that the probability of this event according to $\RR_n$ is at most equal to $C^2\alpha^t \RR_n(E((v_2,v_1),(1,i)))$.

Then the probability $P$ that a word in $\Red_n$ admits two non-overlapping occurrences of a factor of length $t$, one standard and one straddling, is bounded above by the sum of these values when $\ell, i, v_1, v_2$ run over all possible values:
$$P \le \sum_{\ell = 0}^t \sum_{i = \ell}^{n-2t+\ell} \sum_{v_2\in\Red_\ell} \sum_{v_1\in \Red_{t-\ell}} C^2\alpha^t \RR_n(E((v_2,v_1),(1,i))).$$
For fixed values of $\ell$ and $i$, $\Red_n$ is the disjoint union of the $E((v_2,v_1),(1,i))$ when $v_2$ runs over $\Red_\ell$ and $v_1$ runs over $\Red_{t-\ell}$. So we get
$$P \le \sum_{\ell = 0}^t \sum_{i = \ell}^{n-2t+\ell} C^2\alpha^t \le C^2nt\alpha^t.$$
Thus the probability that a reduced word of length $n$ has two non-overlapping occurrences of a word of length $t$ as cyclic factors is at most equal to $(Cn^2 + C^2nt)\alpha^t \le 2C^2n^2\alpha^t$, as announced.

Finally, we consider the situation where a factor of length $t$ and its inverse occur in $u$, with one of the occurrences straddling: that is, there exist integers $\ell,i$ and reduced words $v_1,v_2$ such that $0 < \ell < t$, $\ell \le i \le n-2t+\ell$, $|v_2| = \ell$, $|v_1v_2| = t$ and $u$ lies in
$$E((v_2,v_2\inv v_1\inv,v_1),(1,i,n-t+\ell+1)) = E((v_2,v_2\inv,v_1\inv,v_1),(1,i,i+\ell,n-t+\ell+1)).$$
As above, the probability of this event according to $\RR_n$ is at most
$$C\alpha^{t-\ell}\RR_n(E((v_2,v_2\inv,v_1\inv),(1,i,i+\ell)))$$
and the probability $P'$ that a reduced word of length $n$ has two non-overlapping occurrences of a word of length $t$ as cyclic factors, with one of them straddling, satisfies
$$P' \le  \sum_{\ell=1}^{t-1}\sum_{i = \ell}^{n-2t+\ell}\sum_{v_{2}\in\Red_{\ell}}\sum_{v_{1}\in\Red_{t-\ell}}
C\alpha^{t-\ell}\RR_n(E((v_2,v_2\inv,v_1\inv),(1,i,i+\ell))).$$
For fixed values of $\ell$, $i$ and $v_2$, $E_n((v_2,v_2\inv),(1,i))$ is the disjoint union of the $E_n((v_2,v_2\inv,v_1\inv),(1,i,i+\ell))$ when $v_1$ runs over $\Red_{t-\ell}$. Therefore we have
$$P' \le  \sum_{\ell=1}^{t-1}\sum_{i = \ell}^{n-2t+\ell}\sum_{v_{2}\in\Red_{\ell}}
C\alpha^{t-\ell}\RR_n(E((v_2,v_2\inv),(1,i))).$$
By Lemma~\ref{lm: factors at given positions} again, $\RR_n(E((v_2,v_2\inv),(1,i))) \le C\alpha^\ell \RR_n(E((v_2)(1)))$ and we get, by the same reasoning as above,
$$P' \le  \sum_{\ell=1}^{t-1}\sum_{i = \ell}^{n-2t+\ell}\sum_{v_{2}\in\Red_{\ell}}
C^2\alpha^t\RR_n(E((v_2),(1))) = \sum_{\ell=1}^{t-1}\sum_{i = \ell}^{n-2t+\ell} C^2\alpha^t \le C^2nt\alpha^t.$$
Thus the probability that a reduced word of length $n$ has an occurrence of a word of length $t$ and its inverse as a cyclic factor is, again, at most equal to $(Cn^2 + C^2nt)\alpha^t \le 2C^2n^2\alpha^t$, as announced.
\end{proof}

Finally, we give an upper bound to the probability that a reduced word has overlapping occurrences of a cyclic factor of length $t$ (observing again that a reduced word cannot have overlapping occurrences of a (cyclic) factor and its inverse).

\begin{lemma}\label{lm: multiple overlap}
Let $1 \le t < n$. The probability that a reduced word of length $n$ has overlapping occurrences of a cyclic factor of length $t$ is at most equal to  $\left(Cnt + 2C^2 t^2 \right)\alpha^{t}  \le 3\ C^2nt\alpha^{t}$. 
%
\end{lemma}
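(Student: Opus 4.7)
My plan is to partition the event ``$u \in \Red_n$ has two overlapping cyclic occurrences of a factor of length $t$'' into three cases according to the nature of the two occurrences: (A) both are regular factors, (B) one is regular and the other is straddling, or (C) both are straddling (with necessarily distinct splits, hence different starting positions). Case (A) is already handled by Lemma~\ref{lm: overlapping factors}, contributing at most $Cnt\alpha^t$. The new work concerns cases (B) and (C), whose combined contribution must be bounded by $2C^2 t^2 \alpha^t$. The final inequality $(Cnt + 2C^2 t^2)\alpha^t \le 3C^2 nt\alpha^t$ follows from $C\ge 1$ and $t\le n$.

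\textbf{Case (B).} I would enumerate over the split $v = v_1 v_2$ of the straddling occurrence, with $|v_2| = \ell \in \{1,\ldots,t-1\}$, and over the starting position $i$ of the regular occurrence. The straddling occurrence imposes $u[1..\ell] = v_2$ and $u[n-t+\ell+1..n] = v_1$, while the regular one imposes $u[i..i+t-1] = v_1 v_2$. The straddling cyclic interval is $[n-t+\ell+1..n]\cup[1..\ell]$, so overlap with the regular interval $[i..i+t-1]$ forces $i\le \ell$ or $i \ge n-2t+\ell+2$; this leaves at most $t$ admissible values of $i$ per choice of $\ell$. For each admissible $(\ell,i)$, I would merge the three specified intervals of $u$ into a disjoint union of segments (using the consistency constraints where the regular occurrence overlaps the prefix or suffix), apply Lemma~\ref{lm: factors at given positions} to this disjoint decomposition, and then sum over the free words $v_1, v_2$ using the telescopic identity $\sum_{w\in\Red_{|w|}} \RR_n(E((w),(j))) = 1$ to dispose of the unconstrained letters. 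This yields a bound of $C^2\alpha^t$ per $(\ell,i)$ configuration, and a total contribution of at most $(t-1)\cdot t\cdot C^2\alpha^t \le C^2 t^2\alpha^t$.

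\textbf{Case (C).} Two distinct straddling occurrences correspond to splits $(\ell,\ell')$ with $1\le \ell < \ell' \le t-1$, and their cyclic intervals always share the boundary positions, so overlap is automatic. The constraints reduce to $u[1..\ell'] = v_2'$ and $u[n-t+\ell+1..n] = v_1$, subject to $v_1 v_2 = v_1' v_2' = v \in \Red_t$. Applying Lemma~\ref{lm: factors at given positions} to the pair of non-overlapping intervals $[1..\ell']$ and $[n-t+\ell+1..n]$ (whose total length is $\ell' + (t-\ell)$) and summing over the $\ell'-\ell$ letters of $v$ that are not pinned by these constraints (again via the telescopic identity) gives a probability of at most $C^2\alpha^t$ per $(\ell,\ell')$ pair. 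There are at most $\binom{t-1}{2} \le t^2/2$ such pairs, so case (C) contributes at most $C^2 t^2 \alpha^t/2$. Summing the three cases gives $(Cnt + \tfrac32 C^2 t^2)\alpha^t \le (Cnt + 2C^2 t^2)\alpha^t$, as announced.

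\textbf{Main obstacle.} The technically delicate step is case (B): the three specified intervals of $u$ (prefix of length $\ell$, suffix of length $t-\ell$, and the regular occurrence of length $t$) overlap in a non-trivial pattern that depends on $\ell$ and $i$, and I must carefully identify the resulting disjoint segments of constrained positions so that Lemma~\ref{lm: factors at given positions} yields exactly a factor of $\alpha^t$ after summing the unconstrained degrees of freedom of $v$. The two subcases ($i\le\ell$ versus $i\ge n-2t+\ell+2$) must each be treated by hand, and one has to verify that the consistency conditions on $v$ imposed by the self-overlap (via the classical $v = x^s y$ structure of Lemma~\ref{lm: overlapping factors}) do not inflate the final count.
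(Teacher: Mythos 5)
Your proposal is correct in spirit and would yield the stated bound, but it organizes the ``at least one occurrence is straddling'' analysis differently from the paper. The paper does not separate your cases (B) and (C). Instead it treats them uniformly: it notes (as in Lemma~\ref{lm: overlapping factors}) that the union of the two overlapping occurrences is a word $xv = x^{s+1}y$ with $v = x^sy$ and $y$ a prefix of $x$, and that since at least one occurrence straddles, $u \in v_2 \tilde{A}^* v_1$ with $v_1 v_2 = x^{s+1}y$. It then parametrizes by a cyclic conjugate $z$ of $x$ and integers $k = |z|$, $0\le\ell<k$, $m\ge 0$, writes $v_1 = \suff_h(z)z^{m'}$, $v_2 = z^m\pref_\ell(z)$, and telescopes over $z\in\Red_k$. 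Your flatter parametrization by position and split data $(\ell,i)$ in case (B) and $(\ell,\ell')$ in case (C) achieves the same thing: the period of $v$ forced by the self-overlap cuts the degrees of freedom down to a window of the appropriate length, so each configuration contributes $C^2\alpha^t$ and there are $O(t^2)$ of them, matching the paper's $2C^2 t(t-1)\alpha^t$ for the straddling part.

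One point to tighten: in case (C) you write ``apply Lemma~\ref{lm: factors at given positions} to the pair of non-overlapping intervals $[1..\ell']$ and $[n-t+\ell+1..n]$ \dots\ and sum over the $\ell'-\ell$ letters of $v$ that are not pinned.'' Taken literally this yields $C^2\alpha^{\ell' + t - \ell}$ with nothing left to telescope. To get $C^2\alpha^t$ you must first split off a leading window of length $p = \ell'-\ell$ (which, after the periodicity constraint, is a cyclic conjugate of the period block of $v$), apply Lemma~\ref{lm: factors at given positions} to peel off the remaining two blocks — of total length exactly $t$ — and only then telescope over the leading window with $\sum_{z\in\Red_p}\RR_n(E((z),(1))) = 1$. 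The same caveat applies to case (B), where the leading window has length $q = t-\ell+i-1$, which depends on $i$, and its identification as a cyclic conjugate of the period block needs to be made explicit and differs between the subcases $i\le\ell$ and $i\ge n-2t+\ell+2$. This is exactly the bookkeeping the paper avoids by its single period-based parametrization $(k,\ell,m,z)$. Modulo that, your count $(Cnt + \tfrac32 C^2 t^2)\alpha^t$ and the paper's $(Cnt + 2C^2 t(t-1))\alpha^t$ both sit below the stated $(Cnt + 2C^2 t^2)\alpha^t \le 3C^2 nt\alpha^t$.
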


\begin{proof}
The probability that a reduced word of length $n$ has overlapping occurrences of a non-straddling factor of length $t$ is at most $Cnt\alpha^t$ by Lemma~\ref{lm: overlapping factors}.

Let us now assume that the reduced word $u \in \Red_n$ has overlapping occurrences of a cyclic factor $v$ of length $t$, with one at least of these occurrences straddling. Note that any cyclic factor of $u$ is a factor of $u^2$. Therefore, using the same arguments as for Lemma~\ref{lm: overlapping factors}, $u$ has a straddling cyclic factor of the form $xv = x^{s+1}y$, where $|x| > 0$, $y$ is a prefix of $x$ and $s \ge 1$. In particular, $v = x^sy$ and $t = s|x|+|y|$. 

It follows that $u$ is in $v_2\tilde A^*v_1$, for some $v_1, v_2$ such that $v_1v_2 = x^{s+1}y$. Denote by $\pref_\ell(z)$ and $\suff_\ell(z)$ the prefix and the suffix of length $\ell$ of a word $z$. Then there exist a cyclic conjugate $z$ of $x$ and integers $0\le h,\ell < |z| = |x|$ and $m,m'\ge 0$ such that $v_1 = \suff_h(z)z^{m'}$ and $v_2 = z^m\pref_\ell(z)$. Note that $x^{s+1}y = \suff_h(z)z^{m+m'}\pref_\ell(z)$ and
\begin{align*}
h+\ell &= |y| \pmod{|z|} \\
m+m' &= \begin{cases}s+1 &\textrm{if $h+\ell = |y|$}\\ s &\textrm{if $h+\ell = |z|+|y|$}\end{cases} \\
t+|z| &= (m+m')|z| + h + \ell.
\end{align*}
Observe also that $|y|$ is determined by $|z|$ ($|y| = t \pmod{|z|}$), that $h$ is determined by $\ell$ and $|z|$, and that $m'$ is determined by $m$, $\ell$ and $|z|$. Then
\begin{align*}
u \in \bigcup_{k = 1}^{t-1} \bigcup_{\ell = 0}^{k-1} &\bigcup_{m = 0}^{1+\lfloor\frac tk\rfloor} \bigcup_{z\in\Red_k} X_{z,m,\ell},\textrm{ where}\\
X_{z,\ell,m} &= E((z^m\pref_\ell(z),\suff_h(z)z^{m'}),(1,n - m'|z| - h +1))
\end{align*}
and $h$ and $m'$ take the values imposed by those of $k = |z|$, $\ell$ and $m$. In particular, the probability $P$ that a reduced word in $\Red_n$ has overlapping occurrences of a cyclic factor of length $t$, with at least one of these occurrences straddling, satisfies
$$P \enspace \le \enspace \sum_{k = 1}^{t-1} \sum_{\ell = 0}^{k-1}  \sum_{m = 0}^{1+\lfloor\frac tk\rfloor} \sum_{z\in\Red_k} \RR_n(X_{z,\ell,m}),$$

If $m \ge 1$, then
$$X_{z,\ell,m} = E((z,z^{m-1}\pref_\ell(z),\suff_h(z)z^{m'}),(1,|z|+1,n - m'|z| - h +1))$$
and a double application of Lemma~\ref{lm: factors at given positions} shows that 
$$\RR_n(X_{z,\ell,m}) \le C^2 \alpha^{m'|z|+h} \alpha^{(m-1)|z|+\ell} \RR_n(E((z),(1))) = C^2 \alpha^t \RR_n(E((z),(1))).$$
Summing these over $z\in \Red_k$ (with $k$, $\ell$ and $m$ fixed, $m\ge 1$), we get
$$\sum_{z\in\Red_k} \RR_n(X_{z,\ell,m}) \le \sum_{z\in\Red_k} C^2\alpha^t \RR_n(E((z),(1))) \le C^2\alpha^t,$$
since $\Red_n$ is partitioned by the $\RR_n(E((z),(1)))$ ($z\in \Red_k$).

If $m =0$ and $h+\ell = |y|$, then $m'|z| = t+|z|-|y|$ and we note that
\begin{align*}
X_{z,\ell,0} &= E((\pref_\ell(z),\suff_h(z)z^{m'}),(1,n - t - |z| + \ell +1)) \\
&\subseteq E((\pref_\ell(z),\suff_{h}(z),\suff_{|y|}(z)z^{m'-1}),(1,n - t - |z| + \ell +1,n-t+1)).
\end{align*}
By Lemma~\ref{lm: factors at given positions}, we get
$$\RR_n(X_{z,\ell,0}) \le C\alpha^t \RR_n(E((\pref_\ell(z),\suff_{h}(z)),(1,n - t - |z| + \ell +1))).$$
Summing over all $z\in \Red_k$ ($k$ and $\ell$ fixed), we get
\begin{align*}
\sum_{z\in\Red_k} \RR_n(X_{z,\ell,0}) &\le \sum_{z\in\Red_k} C\alpha^t \RR_n(E((\pref_\ell(z),\suff_{h}(z)),(1,n - t - k + \ell +1))) \\
&\le \sum_{z_1\in\Red_\ell} \sum_{z_2\in\Red_h} C\alpha^t \RR_n(E((z_1,z_2),(1,n - t - k + \ell +1))) \\
&\le C \alpha^t,
\end{align*}
since $\Red_n$ is partitioned by the $\RR_n(E((z_1,z_2),(1,n - t - k + \ell +1)))$ ($z_1\in \Red_\ell$, $z_2\in \Red_h$).

Finally, if $m = 0$ and $h+\ell = |z|+|y|$, then $m'|z| = t-|y|$. Therefore
\begin{align*}
X_{z,\ell,0} &= E((\pref_\ell(z),\suff_h(z)z^{m'}),(1,n - t - |z| + \ell +1)) \\
&= E((\pref_\ell(z),\pref_{|z|-\ell}(\suff_{h}(z)),\suff_{|y|}(z)z^{m'}),(1,n - t - |z| + \ell +1,n-t+1)).
\end{align*}
By Lemma~\ref{lm: factors at given positions}, this yields
$$\RR_n(X_{z,\ell,0}) \le C\alpha^{t} \RR_n(E((\pref_\ell(z),\pref_{|z|-\ell}(\suff_{h}(z))),(1,n - t + |z| + \ell +1))).$$
As in the previous case, summing over all $z\in \Red_k$ ($k$ and $\ell$ fixed) yields
$$\sum_{z\in\Red_k} \RR_n(X_{z,\ell,0}) \le C \alpha^{t}.$$

Then we get the following upper bound for the probability $P$:
\begin{align*}
P &\le \sum_{k = 1}^{t-1} \sum_{\ell = 0}^{k-1}  \sum_{m = 1}^{1+\lfloor\frac tk\rfloor} C^2 \alpha^t + \sum_{k = 1}^{t-1} \sum_{\ell = 0}^{k-1} C \alpha^{t} \\
& \le C^2\frac32 t(t-1) \alpha^t + C \frac12 t(t-1)\alpha^{t} \enspace\\
&\le \enspace 2 C^2 t(t-1) \alpha^{t}
 .
\end{align*}
This concludes the proof.
\end{proof}

In order to extend the results of this section to cyclically reduced words, we need an additional hypothesis, essentially stating that the probability of cyclically reduced words does not vanish. In fact, we have the following general result.

\begin{lemma}\label{lemma: conditional probability}
Let $(\RR_n)_{n\ge 0}$ be a sequence of measures satisfying $\liminf \RR_n(\calC_n) = p > 0$. Let $X$ be a subset of $\Red$. Then for each $\delta > 1$ and for every large enough $n$, the probability $\RR_n(X \mid \calC)$ that a cyclically reduced word of length $n$ is in $X$ is at most equal to $\frac\delta p \RR_n(X)$. In particular, if $X$ is exponentially (resp. super-polynomially, polynomially, simply) negligible, then so is $X \cap \calC$ in $\calC$.
\end{lemma}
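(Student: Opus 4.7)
The plan is a direct application of the definition of conditional probability, using the hypothesis on $\liminf \RR_n(\calC_n)$ to control the denominator. First I would observe that since the support of $\RR_n$ is contained in $\Red_n$, we have $\RR_n(\calC) = \RR_n(\calC_n)$ and $\RR_n(X \cap \calC) \le \RR_n(X)$. Hence, whenever $\RR_n(\calC_n) > 0$,
\[
\RR_n(X \mid \calC) \;=\; \frac{\RR_n(X \cap \calC)}{\RR_n(\calC_n)} \;\le\; \frac{\RR_n(X)}{\RR_n(\calC_n)}.
\]

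Next, I would invoke the hypothesis $\liminf_n \RR_n(\calC_n) = p > 0$: for any fixed $\delta > 1$, there exists $n_0$ such that $\RR_n(\calC_n) \ge p/\delta$ for all $n \ge n_0$ (which in particular guarantees that the conditional probability is well-defined for such $n$, in agreement with the convention stated earlier in Section~\ref{sec: proba model}). Substituting this lower bound into the previous inequality gives the first claim, namely $\RR_n(X \mid \calC) \le (\delta/p)\,\RR_n(X)$ for every $n \ge n_0$.

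The preservation of negligibility is then immediate: fixing any $\delta > 1$, the bound $\RR_n(X \mid \calC) \le (\delta/p)\,\RR_n(X)$ shows that the conditional probability is controlled by a fixed positive multiple of $\RR_n(X)$ for large $n$. Each of the four classes of smallness considered in the paper (exponential, super-polynomial, polynomial, and simply tending to zero) is manifestly stable under multiplication by a positive constant, so the decay rate of $\RR_n(X)$ is inherited by $\RR_n(X \mid \calC)$, which is exactly the assertion that $X \cap \calC$ is negligible (in the appropriate sense) in $\calC$.

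There is no substantial obstacle here: the entire content of the lemma is concentrated in the hypothesis $\liminf \RR_n(\calC_n) > 0$, which is precisely what makes the denominator of the conditional probability bounded uniformly away from zero for large $n$; the rest is bookkeeping with the definition of conditional probability and trivial closure properties of the four decay classes.
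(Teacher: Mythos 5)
Your proof is correct and follows essentially the same route as the paper's: rewrite the conditional probability as $\RR_n(X\cap\calC)/\RR_n(\calC_n)$, bound the numerator by $\RR_n(X)$, and use $\liminf\RR_n(\calC_n)=p>0$ to get $\RR_n(\calC_n)\ge p/\delta$ for large $n$. You merely spell out the intermediate steps (support observation, choice of $n_0$, stability of decay classes under multiplication by a constant) that the paper's one-line proof leaves implicit.
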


\begin{proof}
By definition, $\RR_n(X \mid \calC) = \RR_n(X\cap\calC \mid \calC) = \frac{\RR_n(X\cap\calC)}{\RR_n(\calC_n)} \le \frac\delta p \RR_n(X)$, which concludes the proof.
\end{proof}

The following statement is an immediate consequence.

\begin{corollary}\label{prop: repeated cyclic factors in cyclically reduced words}
Let $(\RR_n)_{n\ge 0}$ be a prefix-heavy sequence of parameters $(C,\alpha)$, with the property that $\liminf_n \RR_n(\calC_n) = p > 0$. Then for every $\delta > 1$ and every large enough $n$, the probability that a cyclically reduced word of length $n$ has two non-overlapping occurrences of a cyclic factor of length $t$ (resp. an occurrence of a cyclic factor of length $t$ and its inverse, two overlapping occurrences of a cyclic factor of length $t$) is at most $\frac\delta p(Cn^2 + C^2nt)\alpha^t$ (resp. $\frac\delta p(Cn^2 + C^2nt)\alpha^t$, $\frac{3 \delta}{ p} C^2 nt \alpha^{t}$).
\end{corollary}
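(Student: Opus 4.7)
The plan is to observe that this corollary is essentially an immediate conditional-probability reformulation of Lemmas~\ref{lm: multiple no overlap} and~\ref{lm: multiple overlap}, combined with Lemma~\ref{lemma: conditional probability}. Indeed, the three events in question (two non-overlapping occurrences of a cyclic factor of length $t$; an occurrence of a cyclic factor of length $t$ together with an occurrence of its inverse; two overlapping occurrences of a cyclic factor of length $t$) are all subsets of $\Red_n$ whose $\RR_n$-probabilities are bounded in Lemmas~\ref{lm: multiple no overlap} and~\ref{lm: multiple overlap} by $(Cn^2 + C^2nt)\alpha^t$, $(Cn^2 + C^2nt)\alpha^t$ and $3C^2nt\alpha^t$ respectively.

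First, I would let $X_1, X_2, X_3 \subseteq \Red$ denote these three sets (for each $n$, $X_i \cap \Red_n$ is the event on words of length $n$). The bounds above apply to $\RR_n(X_i)$. Next, I would invoke the hypothesis $\liminf_n \RR_n(\calC_n) = p > 0$ and apply Lemma~\ref{lemma: conditional probability}: for every $\delta > 1$, there exists $n_0$ such that for $n \ge n_0$ one has
\[
\RR_n(X_i \mid \calC) \;\le\; \frac{\delta}{p}\,\RR_n(X_i)
\]
for each $i=1,2,3$. Substituting the bounds from Lemmas~\ref{lm: multiple no overlap} and~\ref{lm: multiple overlap} directly yields the three announced inequalities $\frac\delta p(Cn^2 + C^2nt)\alpha^t$, $\frac\delta p(Cn^2 + C^2nt)\alpha^t$ and $\frac{3\delta}{p}C^2nt\alpha^t$.

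There is essentially no obstacle here: the work has been done in the preceding lemmas, and Lemma~\ref{lemma: conditional probability} is precisely the tool designed to transfer probability estimates from $\Red_n$ to $\calC_n$ under the assumption $\liminf_n \RR_n(\calC_n) > 0$. The only minor point to note is that the bound $\RR_n(X_i \mid \calC) = \RR_n(X_i \cap \calC_n)/\RR_n(\calC_n)$ requires $\RR_n(\calC_n) > 0$, which holds for $n$ large enough by the $\liminf$ hypothesis, and that the choice of $n_0$ depends on $\delta$ (since $\RR_n(\calC_n) \ge p/\delta$ only eventually); this is exactly the content of the phrase ``for every large enough $n$'' in the statement.
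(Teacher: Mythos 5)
Your proof is correct and matches the paper's own argument essentially verbatim: the paper's proof also consists of defining the relevant subset $X$ of $\Red$, applying Lemma~\ref{lemma: conditional probability}, and invoking the bounds from Lemmas~\ref{lm: multiple no overlap} and~\ref{lm: multiple overlap}. Your added remark about the dependence of $n_0$ on $\delta$ is a reasonable clarification of the phrase ``for every large enough $n$.''
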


\begin{proof}
Let $X$ be the set of reduced words of length $n$ with two non-overlapping occurrences of a cyclic factor of length $t$ (resp. an occurrence of a cyclic factor of length $t$ and its inverse, two overlapping occurrences of a cyclic factor of length $t$). It suffices to apply Lemma~\ref{lemma: conditional probability} to the set $X$, and to use the results of Lemmas~\ref{lm: multiple no overlap} and~\ref{lm: multiple overlap}.
\end{proof}

\subsection{Measures on tuples of lengths and on tuples of words}

For every positive integer $k$, let $\tuple_{k}$ denote the set of $k$-tuples
of non-negative integers and $\tupleW_{k}$ denote the set of $k$-tuples
of reduced words. Let also $\tuple = \bigcup_{k} \tuple_{k}$ and $\tupleW = \bigcup_{k} \tupleW_{k}$ be the sets of all tuples of non-negative integers, and of reduced words respectively.

For a given $\vec{h}=(h_{1},\ldots, h_{k})$ of $\tupleW_{k}$,
let $\|\vec{h}\|$ be the element of $\tuple_{k}$ given by
\[
\|\vec{h}\| = \left( |h_{1}|,\ldots,|h_{k}|\right).
\]
A \emph{prefix-heavy sequence of measures on tuples of reduced words} is a sequence $(\PP_{n})_{n\geq 0}$ of measures on $\tupleW$ such that for every $\vec{h}=(h_{1},\ldots,h_{k})$ of $\tupleW$,
\[
\PP_{n}(\vec{h}) = \TT_{n}(\|\vec{h}\|) \prod_{i=1}^{k}\RR_{|h_{i}|}(h_{i}),
\]
where $(\TT_{n})_{n\geq 0}$ is a sequence of measures on $\tuple$ and $(\RR_{n})_{n\geq 0}$ is a prefix-heavy sequence of measures on $\Red$. If $(\RR_{n})_{n\geq 0}$ is prefix-heavy with parameters $(C,\alpha)$, then we say that $(\TT_{n})_{n\geq 0}$ is prefix-heavy with parameters $(C,\alpha)$.

\begin{remark}
In the definition above, to draw a tuple of words according to
$\PP_{n}$, one can first draw a tuple of lengths $(\ell_{1},\ldots,\ell_{k})$
following $\TT_{n}$, and then draw, independently for each coordinate,
an element of $\Red_{\ell_{i}}$ following $\RR_{\ell_{i}}$.
\end{remark}

\begin{example}\label{ex: uniform distribution on tuples}
Let $\nu(n)$ be an integer-valued function. The uniform distribution on the $\nu(n)$-tuples of reduced words of length exactly $n$ is a prefix-heavy sequence of measures: one needs to take $\TT_n$ to be the measure whose weight is entirely concentrated on the $\nu(n)$-tuple $(n,\ldots,n)$ and $\RR_n$ to be the uniform distribution on $\Red_n$ (see Example~\ref{ex: uniform distribution on words}).

The uniform distribution on the $\nu(n)$-tuples of reduced words of length at most $n$ is also a prefix-heavy sequence of measures. Here the support of $\TT_n$ must be restricted to the tuples $(x_1,\ldots,x_{\nu(n)})$ such that $x_i \le n$ for each $i$, with $\TT_n(x_1,\ldots,x_{\nu(n)}) = \prod_i\frac{|\Red_{x_i}|}{|\Red_{\le n}|}$.

Both can be naturally adapted to handle the uniform distribution on the $\nu(n)$-tuples of cyclically reduced words of length exactly (resp. at most) $n$.

For appropriate functions $\nu(n)$, we retrieve the few-generator and the density models discussed in Section~\ref{sec: two classical models}. We will see a more general class of examples in Section~\ref{sec: automata probabilities}.
\end{example}

\subsection{General statements}\label{sec: general statements}

If $\vec{x}\in\tuple$, we denote by $\Max(\vec{x})$ and
$\Min(\vec{x})$ the maximum and minimum element of $\vec{x}$. We also
denote by $\Nbr(\vec{x})$ the integer
$k$ such that $\vec{x}\in\tuple_{k}$.

The statistics  $\Min$, $\Max$, and $\Nbr$ are extended to tuples of words
by setting $\Min(\vec{h})=\Min(\|\vec{h}\|)$, $\Max(\vec{h})=\Max(\|\vec{h}\|)$
and  $\Nbr(\vec{h})=\Nbr(\|\vec{h}\|)$.
In the sequel we consider
sequences of probability spaces on $\tupleW$ and $\Min$, $\Max$, and
$\Nbr$ are seen as random variables.

The following statements give general sufficient conditions for a tuple to generically have the central tree property, generate a malnormal subgroup, or satisfy a small cancellation property.

\begin{proposition}\label{prop: heartsize}
Let $(\PP_{n})_{n\geq 0}$ be a prefix-heavy sequence of measures on tuples of reduced words of parameters $(C,\alpha)$. Let $f\colon \N\to\N$ such that $f(\ell) \le \frac\ell2$ for each $\ell$. If there exists a sequence $(\eta_n)_{n\geq 0}$ 
of positive real numbers such that
\begin{equation}\label{eq: heartsize}
\lim_{n\rightarrow\infty}\PP_{n}\left(\Nbr^2  \alpha^{f(\Min)} > \eta_n\right) = 0
\quad\text{and}\quad  \lim_{n\rightarrow\infty}\eta_n = 0,
\end{equation}
then a random tuple of words generically satisfies $\Lcp(\vec h) < f(\Min(\vec h))$.

If the limits in Equation~\eqref{eq: heartsize} converge polynomially (resp. super-polynomially, exponentially) fast, then $\Lcp(\vec h) < f(\Min(\vec h))$ polynomially (resp. super-polyn\-omially, exponentially) generically.
\end{proposition}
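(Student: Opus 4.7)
The plan is to control, conditionally on the length vector $\|\vec h\|$, the probability that two distinct positions of $\vec h^\pm$ share a common prefix of length $t := f(\Min(\vec h))$, and then to extract the unconditional bound via the hypothesis on $\eta_n$. Writing $\Lcp(\vec h) \ge t$ as the union, over ordered pairs of distinct positions in $\vec h^\pm$, of the event that the two corresponding words share a prefix of length $t$, a union bound produces at most $|\vec h^\pm|^2 = 4\Nbr(\vec h)^2$ terms; it therefore suffices to show that each pair contributes at most $C\alpha^t$.

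For the pairwise estimate I would condition on $\|\vec h\| = (\ell_1,\ldots,\ell_k)$: the coordinates $h_1,\ldots,h_k$ are then independent, with $h_i$ distributed according to $\RR_{\ell_i}$. The first case is a pair $(h_i^\epsilon, h_j^{\epsilon'})$ with $i \ne j$ and $\epsilon,\epsilon' \in \{+1,-1\}$. By independence, the probability that they share a prefix of length $t$ equals $\sum_{u \in \Red_t} \RR_{\ell_i}(\Pref(u^\epsilon))\,\RR_{\ell_j}(\Pref(u^{\epsilon'}))$; prefix-heaviness bounds $\RR_{\ell_i}(\Pref(u^\epsilon)) \le C\alpha^t$ uniformly in $u$, and summing the other factor over $u \in \Red_t$ yields $C\alpha^t$, since the events in question partition the support of $h_j^{\epsilon'}$ (and $\ell_j \ge \Min(\vec h) \ge 2t$ by the hypothesis $f(\ell) \le \ell/2$). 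The second case is the pair $(h_i,h_i\inv)$ for some $i$: sharing a prefix of length $t$ forces $h_i$ to be of the form $wvw\inv$ with $|w|=t$, and Corollary~\ref{lm: cyclically reduced} directly delivers the bound $C\alpha^t$; the boundary case $f(\ell_i)=\ell_i/2$ is harmless, since $h_i = ww\inv$ would then be non-reduced and thus has probability $0$.

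Combining the two cases, I would obtain the conditional estimate $\PP_n(\Lcp(\vec h) \ge f(\Min(\vec h)) \mid \|\vec h\|) \le 4 C\, \Nbr(\vec h)^2\, \alpha^{f(\Min(\vec h))}$. To conclude, I would split the unconditional probability according to whether $\Nbr(\vec h)^2 \alpha^{f(\Min(\vec h))}$ exceeds $\eta_n$ or not: the first event has probability $o(1)$ by hypothesis, and on its complement the conditional estimate is bounded by $4C\eta_n$, which also tends to $0$. The quantitative refinements (polynomial, super-polynomial, exponential) follow from the very same split, since both summands then decay at the prescribed rate.

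The main obstacle I anticipate is the pair $(h_i,h_i\inv)$: it cannot be handled by the independence-plus-prefix-heaviness argument used for the $i \ne j$ case, and it alone motivates the hypothesis $f(\ell) \le \ell/2$. The proof must therefore isolate it cleanly and invoke Corollary~\ref{lm: cyclically reduced} at precisely the right moment; beyond that, the argument is a routine union bound followed by a conditioning split, and I do not expect any further serious difficulty.
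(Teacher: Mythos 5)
Your proof is correct and follows essentially the same approach as the paper's: a union bound over pairs of positions in $\vec h^\pm$, using independence plus prefix-heaviness for pairs with distinct indices, Corollary~\ref{lm: cyclically reduced} for the pair $(h_i,h_i\inv)$, and a final split on whether $\Nbr^2\alpha^{f(\Min)}$ exceeds $\eta_n$. The only cosmetic difference is that you carry out a single union bound over all ordered pairs of positions in $\vec h^\pm$, whereas the paper separates the failure event into two sets $\calG_1$ (distinct indices) and $\calG_2$ (a word and its own inverse), but the ingredients and estimates are identical.
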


\begin{proof}
The set of all tuples $\vec h$ that fail to satisfy the inequality $\Lcp(\vec h) < f(\Min(\vec h))$ is the union $\calG_1 \cup \calG_2$ of the two following sets:
\begin{itemize}
\item the set $\calG_1$ of all tuples $\vec h = (h_1,\ldots, h_k)$ such that for some $1\le i < j \le k$, a word of length $f(\Min(\vec h))$ occurs as a prefix of $h_i$ or $h_i\inv$, and also of $h_j$ or $h_j\inv$,

\item the set $\calG_2$ of all tuples $\vec h = (h_1,\ldots, h_k)$ such that  for some $1\le i\le k$, $h_i$ and $h_i\inv$ have a common prefix of length $f(\Min(\vec h))$,
\end{itemize}
and we only need to prove that $\lim_n\P_n(\calG_1) = \lim_n\P_n(\calG_2) = 0$.

Let $k, \ell$ be positive integers and let $X_{k,\ell}$ be the set of tuples $\vec h\in \tupleW_k$ such that $\Min(\vec h) = \ell$.
If $\vec h \in X_{k,\ell}$ and $1\le i < j \le k$, then the probability that $h_i$ and $h_j$ have the same prefix of length $t = f(\ell)$ is 
$$\sum_{w\in\Red_{t}} \RR_{|h_i|}(\Pref(w))  \RR_{|h_j|}(\Pref(w)) \enspace\leq\enspace C \alpha^{t}  \sum_{w\in\Red_{t}}\RR_{|h_j|}(\Pref(w)) \enspace\leq\enspace  C \alpha^{t}.$$
Then we have $\P_n(\calG_1 \mid X_{k,\ell}) \le 4k^2C\alpha^{f(\ell)}$, or rather $\P_n(\calG_1 \mid X_{k,\ell}) \le \min(1,4k^2C\alpha^{f(\ell)})$, where the factor $k^2$ corresponds to the choice of $i$ and $j$ and the factor 4 corresponds to the possibilities that $h_i$ or $h_i\inv$, and $h_j$ or $h_j\inv$ have a common prefix of length $f(\ell)$. Therefore we have $\P_n(\calG_1 \cap X_{k,\ell}) \le \min(1,4k^2C\alpha^{f(\ell)})\ \P_n(X_{k,\ell})$

We can split the set of pairs $(k,\ell)$ into those pairs such that $k^2\alpha^{f(\ell)} > \eta_n$ and the others, for which $k^2\alpha^{f(\ell)} \le \eta_n$. Then we have
$$\P_n(\calG_1) \enspace=\enspace \sum_{k,\ell} \P_n(\calG_1 \cap X_{k,\ell}) \enspace\le\enspace \P_n(\Nbr^2  \alpha^{f(\Min)} > \eta_n) + 4C\ \eta_n,$$
which tends to 0 under the hypothesis in Equation~(\ref{eq: heartsize}).

Similarly, if $\vec h \in X_{k,\ell}$ and $i \le k$, the probability that $h_i$ and $h_i\inv$ have a common prefix of length $f(\ell)$ is at most $C\alpha^{f(\ell)}$ by Corollary~\ref{lm: cyclically reduced}. It follows that $\P_n(\calG_2 \mid X_{k,\ell}) \le \min(1,kC\alpha^{f(\ell)})$, and $\P_n(\calG_2 \cap X_{k,\ell}) \le \min(1,kC\alpha^{f(\ell)})\ \P_n(X_{k,\ell})$.

Splitting the set of pairs $(k,\ell)$ into those pairs such that $k\alpha^{f(\ell)} > \eta_n$ and those for which $k\alpha^{f(\ell)} \le \eta_n$, yields
$$\P_n(\calG_2) \enspace=\enspace \sum_{k,\ell} \P_n(\calG_2 \cap X_{k,\ell}) \enspace\le\enspace \P_n(\Nbr  \alpha^{f(\Min)} > \eta_n) + C\ \eta_n.$$
Now $\Nbr  \alpha^{f(\Min)} < \Nbr^2  \alpha^{f(\Min)}$, so $\P_n(\Nbr  \alpha^{f(\Min)} > \eta_n) \le \P_n(\Nbr^2  \alpha^{f(\Min)} > \eta_n)$. It follows that $\lim_n \P_n(\Nbr  \alpha^{f(\Min)} > \eta_n) = 0$, and hence $\lim_n\P_n(\calG_2) = 0$, which concludes the proof.
\end{proof}

\begin{theorem}[Central tree property]\label{thm: general small heart}
Let $(\PP_{n})_{n\geq 0}$ be a prefix-heavy sequence of measures on tuples of reduced words of parameters $(C,\alpha)$. If there exists a sequence $(\eta_n)_{n\geq 0}$ 
of positive real numbers such that
\begin{equation}\label{eq: small heart}
\lim_{n\rightarrow\infty}\PP_{n}\left(\Nbr^2  \alpha^{\frac\Min2} > \eta_n\right) = 0
\quad\text{and}\quad  \lim_{n\rightarrow\infty}\eta_n = 0,
\end{equation}
then a random tuple of words generically has the central tree property. In particular, such a tuple is a basis of the subgroup it generates.

If the limits in Equation~\eqref{eq: small heart} converge polynomially (resp. super-polynomially, exponentially) fast, then the central tree property holds polynomially (resp. super-polyn\-omially, exponentially) generically.
\end{theorem}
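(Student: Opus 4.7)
The strategy is to derive the theorem directly from Proposition~\ref{prop: heartsize} applied with $f(\ell) = \lfloor \ell/2 \rfloor$, which satisfies the hypothesis $f(\ell)\le \ell/2$. With this choice, the conclusion $\Lcp(\vec h) < f(\Min(\vec h))$ of the proposition becomes $\Lcp(\vec h) < \lfloor \Min(\vec h)/2 \rfloor$, which immediately implies $2\Lcp(\vec h) < \Min(\vec h)$, i.e.\ the central tree property. The ``in particular'' clause, that $\vec h$ is then a basis of $\langle \vec h \rangle$, is part of Proposition~\ref{fact: central tree} and is inherited automatically.

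The only verification required is that the hypothesis of Proposition~\ref{prop: heartsize} follows from the hypothesis of the present theorem. We are given $\lim_n \PP_n(\Nbr^2 \alpha^{\Min/2} > \eta_n) = 0$ with $\eta_n \to 0$, whereas the proposition asks for the analogous statement with $\alpha^{\lfloor \Min/2 \rfloor}$ in place of $\alpha^{\Min/2}$. Since $\lfloor \ell/2 \rfloor \ge \ell/2 - \tfrac12$ and $0 < \alpha < 1$, one has $\alpha^{\lfloor \Min/2 \rfloor} \le \alpha^{-1/2}\,\alpha^{\Min/2}$, so setting $\eta'_n = \alpha^{-1/2}\eta_n$, which still tends to $0$, gives
$$\PP_n\bigl(\Nbr^2 \alpha^{\lfloor \Min/2 \rfloor} > \eta'_n\bigr) \;\le\; \PP_n\bigl(\Nbr^2 \alpha^{\Min/2} > \eta_n\bigr),$$
so the hypothesis of Proposition~\ref{prop: heartsize} holds with $\eta'_n$ in place of $\eta_n$.

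For the quantitative refinement, the rescaling $\eta_n \mapsto \alpha^{-1/2}\eta_n$ only multiplies by a constant, hence preserves polynomial, super-polynomial, and exponential rates of convergence. An inspection of the proof of Proposition~\ref{prop: heartsize} shows that it transmits the same rate from hypothesis to conclusion (the bounds obtained there are of the form $\PP_n(\Nbr^2 \alpha^{f(\Min)} > \eta_n) + O(\eta_n)$), so the refined genericity statement follows at once. There is no substantive obstacle here: the geometric and probabilistic content has already been absorbed into Proposition~\ref{prop: heartsize}, and the present theorem is essentially the specialization of $f$ to the threshold $\ell/2$ that by definition characterizes the central tree property.
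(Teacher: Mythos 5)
Your proof is correct and follows essentially the same route as the paper's: the paper applies Proposition~\ref{prop: heartsize} directly with $f(\ell) = \ell/2$ and invokes Proposition~\ref{fact: central tree} for the ``in particular'' clause, relying on the earlier remark that non-integer lengths are implicitly rounded down. Your extra care with $f(\ell) = \lfloor \ell/2 \rfloor$ and the rescaling $\eta'_n = \alpha^{-1/2}\eta_n$ is a correct and harmless tightening of the same argument, not a different method.
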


\begin{proof}
By definition, a tuple $\vec h \in \tupleW$ satisfies the central tree property if $\Lcp(\vec h) < \frac{\Min(\vec h)}{2}$, so the theorem is a direct application of Proposition~\ref{prop: heartsize} to the function $f(\ell) = \frac\ell2$, and of Proposition~\ref{fact: central tree}.
\end{proof}

\begin{theorem}[Malnormality]\label{thm: general malnormal}
Let $(\PP_{n})_{n\geq 0}$ be a prefix-heavy sequence of measures on tuples of reduced words of parameters $(C,\alpha)$. If there exists a sequence $(\eta_n)_{n\geq 0}$ 
of positive real numbers such that
\begin{equation}\label{eq: generic condition malnormal}
\lim_{n\rightarrow\infty}\PP_{n}\left(\Nbr^2 \Max^2 \alpha^{\frac\Min{8}} > \eta_n\right) = 0
\quad\text{and}\quad  \lim_{n\rightarrow\infty}\eta_n = 0,
\end{equation}
then a random tuple of words generically generates a malnormal subgroup.

If the limits in Equation~\eqref{eq: generic condition malnormal} converge polynomially (resp. super-polynomially, exponentially) fast, then malnormality holds polynomially (resp. super-polyn\-omially, exponentially) generically.
\end{theorem}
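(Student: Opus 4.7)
The plan is to deduce this theorem from Proposition~\ref{prop: sufficient malnormality}, which requires (i) the central tree property, (ii) $3\Lcp(\vec h) < \Min(\vec h)$, and (iii) that no word of length at least $\frac{1}{2}(\Min(\vec h) - 3\Lcp(\vec h))$ has several occurrences as a factor of an element of $\vec h^\pm$. I would first establish generically the stronger inequality $\Lcp(\vec h) < \Min(\vec h)/4$, which implies (i) and (ii) and ensures $\frac{1}{2}(\Min(\vec h) - 3\Lcp(\vec h)) > \Min(\vec h)/8$. The key observation is that, pointwise, $\Nbr^2 \alpha^{\Min/4} \leq \Nbr^2\Max^2\alpha^{\Min/8}$ (since $0 < \alpha < 1$ and $\Max \geq 1$), so the hypothesis of the theorem implies that of Proposition~\ref{prop: heartsize} applied to $f(\ell) = \ell/4$, yielding the desired control on $\Lcp$ with the same rate of convergence.

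The core of the argument consists in bounding the probability that some word of length $t = \Min(\vec h)/8$ has two distinct occurrences as factors of elements of $\vec h^\pm$; it suffices to consider factors of length exactly $t$, since any longer repeated factor contains its length-$t$ prefix, which is also repeated (at distinct positions). I would split into intra-word and cross-word cases, using that conditionally on the length vector $\|\vec h\|$ the $h_i$ are drawn independently. For two occurrences within the same $h_i^\pm$ --- whether non-overlapping, overlapping, or one of a word and one of its inverse --- Lemmas~\ref{lm: factor repeated twice} and~\ref{lm: overlapping factors} bound the probability by $O(|h_i|^2\alpha^t)$ per word, summing to $O(\Nbr \Max^2 \alpha^t)$ over the tuple. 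For a cross-word repetition with $i \neq j$, I would condition on $h_i$ and enumerate the at most $2|h_i|$ candidate factors $w$ of $h_i^\pm$ of length $t$; for each such $w$, Corollary~\ref{lm: factors} bounds the probability that $w$ or $w^{-1}$ occurs in $h_j$ by $O(|h_j|\alpha^t)$. Summing over ordered pairs $(i,j)$ contributes $O(\Nbr^2 \Max^2 \alpha^t)$.

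Putting these estimates together, the conditional probability that $\vec h$ admits a repeated factor of length $t = \Min/8$ is at most $C'\Nbr^2\Max^2\alpha^{\Min/8}$ for some constant $C'$. Splitting the sum over tuple shapes as in the proof of Proposition~\ref{prop: heartsize}, according to whether $\Nbr^2\Max^2\alpha^{\Min/8}$ exceeds $\eta_n$ or not, shows that this probability is bounded above by $\PP_n(\Nbr^2\Max^2\alpha^{\Min/8} > \eta_n) + C'\eta_n$, which tends to zero under the hypothesis, at the stated rate. Combining with the earlier bound on $\Lcp$ and invoking Proposition~\ref{prop: sufficient malnormality} delivers generic malnormality, with the refined convergence statements preserved. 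The main point requiring care will be the cross-word estimate: conditional independence of the $h_j$ given their lengths is what makes Corollary~\ref{lm: factors} applicable after conditioning on $h_i$, and this is precisely what keeps the bound at $\Nbr^2\Max^2$ and explains the exponent $\frac{1}{8}$ (rather than a smaller fraction) in the hypothesis.
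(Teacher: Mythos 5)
Your proposal is correct and follows essentially the same route as the paper's proof: reduce to Proposition~\ref{prop: sufficient malnormality} by showing generically $\Lcp(\vec h) < \Min(\vec h)/4$ (via Proposition~\ref{prop: heartsize} applied to $f(\ell)=\ell/4$, using the pointwise comparison $\Nbr^2\alpha^{\Min/4} \le \Nbr^2\Max^2\alpha^{\Min/8}$), then bound the probability of a repeated factor of length $\Min/8$ by splitting into intra-word and cross-word cases and invoking Lemmas~\ref{lm: factor repeated twice}, \ref{lm: overlapping factors} and Corollary~\ref{lm: factors}, finally splitting over tuple shapes against $\eta_n$ as in Proposition~\ref{prop: heartsize}. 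Your treatment is, if anything, slightly more explicit than the paper's in enumerating the intra-word non-overlapping case alongside the overlapping and inverse cases.
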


\begin{proof}
By Proposition~\ref{prop: sufficient malnormality}, a sufficient condition for a tuple $\vec h \in \tupleW$ to generate a malnormal subgroup is to have $\Lcp(\vec h) < \frac13\Min(\vec h)$, and to not have two occurrences of a word of length $\frac12(\Min(\vec h) - 3\Lcp(\vec h))$ as a factor of a word in $\vec h^\pm$. This condition is satisfied in particular if $\Lcp(\vec h) < \frac14\Min(\vec h)$ and no word of length $\frac18\Min(\vec h)$ has two occurrences as a factor of a word in $\vec h^\pm$.

Therefore the set of all tuples $\vec h$ that generate a non malnormal subgroup is contained in the union $\calG_1 \cup \calG_2 \cup \calG_3 \cup \calG_4$ of the following sets:
\begin{itemize}
\item the set $\calG_1$ of all tuples $\vec h = (h_1,\ldots, h_k)$ such that $\Lcp(\vec h) \ge \frac14\Min(\vec h)$,

\item the set $\calG_2$ of all tuples $\vec h = (h_1,\ldots, h_k)$ such that for some $1\le i < j \le k$, a word of length $\frac18\Min(\vec h)$ occurs as a factor of $h_i$, and also of $h_j$ or $h_j\inv$,

\item the set $\calG_3$ of all tuples $\vec h = (h_1,\ldots, h_k)$ such that  for some $1\le i\le k$, $h_i$ and $h_i\inv$ have a common factor of length $\frac18\Min(\vec h)$,

\item the set $\calG_4$ of all tuples $\vec h = (h_1,\ldots, h_k)$ such that  for some $1\le i\le k$, $h_i$ has at least two occurrences of a factor of length $\frac18\Min(\vec h)$,
\end{itemize}
and we want to verify that $\P_n(\calG_1)$, $\P_n(\calG_2)$, $\P_n(\calG_3)$ and $\P_n(\calG_4)$ all tend to 0 when $n$ tends to infinity.

By Proposition~\ref{prop: heartsize}, the set $\calG_1$ is negligible as soon as $\lim_n\P_n(\Nbr  \alpha^{\frac\Min4} > \eta_n) = 0$. This is true under the hypothesis in Equation~(\ref{eq: generic condition malnormal}) since $\Nbr \alpha^{\frac\Min4} < \Nbr^2\Max^2 \alpha^{\frac\Min{8}}$, and hence $\P_n(\Nbr \alpha^{\frac\Min4} > \eta_n) \le \P_n(\Nbr^2 \Max^2 \alpha^{\frac\Min{8}} > \eta_n)$.

Let now $X_{k,\ell,M}$ be the set of tuples $\vec h\in X_{k,\ell}$ such that $\Max(\vec h) = M$.
Let $1 \le i < j \le k$ and $\vec h \in X_{k,\ell,M}$. By Corollary~\ref{lm: factors}, the probability that  $h_j$ has a given factor $v$ of length $\frac\ell8$ is at most equal to $C M \alpha^{\frac\ell8}$. Summing this probability over all words $v$ which occur as a factor of $h_i$ (at most $|h_i| \le M$ such words), it follows that the probability that $h_i$ and $h_j$ have a common factor of length $t = \frac\ell8$ is at most equal to $C M^2 \alpha^{\frac\ell8}$. Summing now over the possible values of $i$ and $j$, we find that $\P_n(\calG_2 \cap X_{k,\ell,M}) \le \min(1,k^2CM^2\alpha^{\frac\ell8})\ \P_n(X_{k,\ell,M})$ and therefore, as above
$$\P_n(\calG_2) \enspace\le\enspace \P_n(\Nbr^2\Max^2 \alpha^{\frac\Min8} > \eta_n) + C\ \eta_n.$$
It follows from Equation~(\ref{eq: generic condition malnormal}) that $\calG_2$ is negligible.

By Lemma~\ref{lm: factor repeated twice}, the probability that $h_i$ and $h_i\inv$ have a common factor of length $\frac\ell8$ is at most $CM^2\alpha^{\frac\ell8}$. Summing over all choices of $i$, we find that
$$\P_n(\calG_3) \enspace\le\enspace \P_n(\Nbr \Max^2 \alpha^{\frac\Min8} > \eta_n) + C\ \eta_n.$$
Since $\Nbr \Max^2 \alpha^{\frac\Min8} < \Nbr^2 \Max^2 \alpha^{\frac\Min{8}}$, we conclude that $\calG_3$ is negligible.

Finally, we have $\P_n(\calG_4) \le \frac C8\Nbr\Max\Min\alpha^{\frac\Min{8}}$ by Lemma~\ref{lm: overlapping factors}, and hence
$$\P_n(\calG_4) \enspace\le\enspace \P_n(\Nbr\Max\Min \alpha^{\frac\Min{8}} > \eta_n) + \frac C8\ \eta_n.$$
Since $\Nbr\Max\Min \alpha^{\frac\Min{8}} < \Nbr^2\Max^2 \alpha^{\frac\Min{8}}$, it follows as above that the set $\calG_4$ is negligible.
\end{proof}

\begin{theorem}[Small cancellations property]\label{thm: general small cancellations}
Let $(\PP_{n})_{n\geq 0}$ be a prefix-heavy sequence of measures on tuples of reduced words of parameters $(C,\alpha)$, such that $\liminf_n\RR_n(\calC_n) = p > 0$. For any $\lambda\in(0,\frac12)$, if there exists a sequence $(\eta_n)_{n\geq 0}$ of positive real numbers such that
\begin{equation}\label{eq: generic condition 2}
\lim_{n\rightarrow\infty}\PP_{n}\left(\Nbr^2 \Max^2 \alpha^{\lambda\Min} > \eta_n\right) = 0
\quad\text{ and }\quad
\lim_{n\rightarrow\infty} \eta_n = 0,
\end{equation}
then  the property $C'(\lambda)$ generically holds.

If the limits in Equation~\eqref{eq: generic condition 2} converge polynomially (resp. super-polynomially, exponentially) fast, then Property $C'(\lambda)$ holds polynomially (resp. super-polyn\-omially, exponentially) generically.
\end{theorem}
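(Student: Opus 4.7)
The plan is to show that failure of property $C'(\lambda)$ forces a word of length $t = \lambda\Min(\vec h)$ to occur as a cyclic factor of elements of $\vec h^\pm$ in at least two distinct ways, and then to bound the probability of such a repetition via Corollary~\ref{prop: repeated cyclic factors in cyclically reduced words}, following the case-analysis pattern of Theorems~\ref{thm: general small heart} and~\ref{thm: general malnormal}.

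First, I would unpack the definition: if $\vec h$ fails $C'(\lambda)$, there is a piece $u$ which is a prefix of some cyclic conjugate $w$ of a word $v \in \vec h^\pm$ with $|u| \ge \lambda |v| \ge \lambda\Min(\vec h)$; by definition $u$ is also a prefix of a second cyclic conjugate $w'$ of some $v' \in \vec h^\pm$. Thus the length-$t$ prefix of $u$ has two distinct occurrences as a cyclic factor of elements of $\vec h^\pm$. I would distinguish two cases: (A) the two occurrences live in distinct elements $h_i^{\pm 1}$ and $h_j^{\pm 1}$ with $i\ne j$; (B) they live in a single element $h_i^{\pm 1}$, either as a word and its inverse, or as two cyclic occurrences (overlapping or not) of the same word.

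Case (B) is handled directly by Corollary~\ref{prop: repeated cyclic factors in cyclically reduced words}, yielding a contribution bounded by $O(M_i^2 \alpha^t)$ for each index $i$ (where $M_i = |h_i|$), and hence $O(\Nbr \Max^2 \alpha^t)$ after summing. Case (A) is not directly covered by the lemmas of Section~\ref{sec: repeated cyclic factors}, but follows cleanly from independence of $h_i$ and $h_j$: conditionally on $h_j$, there are at most $M_j$ candidate length-$t$ cyclic factors; for each such word, Lemma~\ref{lm: general factors} combined with Lemma~\ref{lemma: conditional probability} gives probability at most $\frac{\delta}{p}\, 2C^2 M_i\, \alpha^t$ of being a cyclic factor of the independent $h_i$. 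A union bound over the $O(k^2)$ ordered pairs (with sign choices) then produces $O(\Nbr^2 \Max^2 \alpha^t)$. Adding the two contributions, the failure probability is at most $O(\Nbr^2 \Max^2 \alpha^{\lambda \Min})$, matching the shape of hypothesis~\eqref{eq: generic condition 2}.

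To pass from this pointwise bound to a genericity statement, I would reproduce verbatim the conditioning scheme of Theorems~\ref{thm: general small heart} and~\ref{thm: general malnormal}: partition tuples according to $(\Nbr,\Max,\Min)$ and split each stratum according to whether $\Nbr^2 \Max^2 \alpha^{\lambda\Min} > \eta_n$ or $\le \eta_n$; the first contribution is absorbed by the hypothesis, and the second by a constant multiple of $\eta_n$. Convergence rates (polynomial, super-polynomial, exponential) are preserved since all polynomial factors are absorbed by the exponential in $\alpha^{\lambda\Min}$. The main technical obstacle, I expect, will be Case (A): it is not subsumed by the single-word lemmas of Section~\ref{sec: repeated cyclic factors}, and requires explicit use of independence across coordinates together with Lemma~\ref{lemma: conditional probability} to transfer the reduced-word bounds to the cyclically reduced setting; everything else is routine bookkeeping of sign choices and integer parts.
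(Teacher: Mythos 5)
Your proposal is correct and follows essentially the same strategy as the paper's proof: the same decomposition of failure into a cross-word common factor (your Case (A), the paper's $\calG_1$) versus intra-word repetitions (your Case (B), the paper's $\calG_2\cup\calG_3\cup\calG_4$, which the paper splits explicitly into non-overlapping same / non-overlapping inverse / overlapping same); the same use of Corollary~\ref{prop: repeated cyclic factors in cyclically reduced words} for Case (B) and of the independence-plus-union-bound argument from Theorem~\ref{thm: general malnormal} for Case (A); and the same stratification by $(\Nbr,\Max,\Min)$ with the $\eta_n$ threshold. If anything your handling of Case (A) via Lemma~\ref{lm: general factors} is a touch more explicit than the paper's about straddling cyclic factors, where the paper merely says "as in the proof of Theorem~\ref{thm: general malnormal}" (which is phrased in terms of plain factors); this is a cosmetic improvement rather than a divergence, and the bounds produced are of the same form.
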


\begin{proof}
A sufficient condition for a tuple of cyclically reduced words $\vec h$ to satisfy $C'(\lambda)$ is for every piece in $\vec h$ to have length less than $\lambda\Min(\vec h)$. Then the set $\calG$ of tuples that fail to satisfy $C'(\lambda)$ is contained in the union $\calG_1\cup \calG_2 \cup \calG_3 \cup \calG_4$ of the following sets:
\begin{itemize}
\item the set $\calG_1$ of all tuples of cyclically reduced words $\vec h = (h_1,\ldots, h_k)$ such that for some $1\le i < j \le k$, a word of length $\lambda\Min(\vec h)$ occurs as a factor of $h_i$, and also of $h_j$ or $h_j\inv$,

\item the set $\calG_2$ of all tuples of cyclically reduced words $\vec h = (h_1,\ldots, h_k)$ such that  for some $1\le i\le k$, $h_i$ has two non-overlapping occurrences of a factor of length $\lambda\Min(\vec h)$,

\item the set $\calG_3$ of all tuples of cyclically reduced words $\vec h = (h_1,\ldots, h_k)$ such that  for some $1\le i\le k$, $h_i$ has non-overlapping occurrences of a factor of length $\lambda\Min(\vec h)$ and its inverse,

\item the set $\calG_4$ of all tuples of cyclically reduced words $\vec h = (h_1,\ldots, h_k)$ such that  for some $1\le i\le k$, $h_i$ has overlapping occurrences of a factor of length $\lambda\Min(\vec h)$,
\end{itemize}
and we want to verify that $\P_n(\calG_1)$, $\P_n(\calG_2)$, $\P_n(\calG_3)$ and $\P_n(\calG_4)$ all tend to 0 when $n$ tends to infinity.

As in the proof of Theorem~\ref{thm: general malnormal}, we find that the probability that a tuple of reduced words $\vec h$ is such that a word of length $\lambda\Min(\vec h)$ occurs as a factor of $h_i$, and also of $h_j$ or $h_j\inv$, for some $i < j$ is at most $\P_n(\Nbr^2\Max^2 \alpha^{\lambda\Min} > \eta_n) + C\ \eta_n$. Reasoning as in the proof of Corollary~\ref{prop: repeated cyclic factors in cyclically reduced words}, it follows that, for every $\delta > 1$,
$$\P_n(\calG_1) \le \frac\delta p\left(\P_n(\Nbr^2\Max^2 \alpha^{\lambda\Min} > \eta_n) + C\ \eta_n\right),$$
and it follows from Equation~(\ref{eq: generic condition 2}) that $\calG_1$ is negligible.

Now using Corollary~\ref{prop: repeated cyclic factors in cyclically reduced words}, we show that
\begin{align*}
\P_n(\calG_2), \P_n(\calG_3) &\le \frac\delta p \left( \P_n(\Nbr(\Max^2 + \Max\Min)\alpha^{\lambda\Min} > \eta_n) + C^2\eta_n\right),\\
\P_n(\calG_4) &\le\frac\delta p \left( \P_n(\Nbr(\Max\Min + \Min^2)\alpha^{\lambda\Min} > \eta_n) + 2 C^2\eta_n\right).
\end{align*}
Since $\Nbr\Max^2$, $\Nbr\Max\Min$ and $\Nbr\Min^2$ are less than $\Nbr^2\Max^2$, the hypothesis in Equation~(\ref{eq: generic condition 2}) shows that $\calG_2$, $\calG_3$ and $\calG_4$ are negligible, and this concludes the proof.
\end{proof}

\subsection{Applications to the uniform distribution case}\label{sec: applications to uniform}

The few-generator model and the density model, based on the uniform distribution on reduced words of a given length and discussed in Section~\ref{sec: two classical models}, are both instances of a prefix-heavy sequence of measures on tuples, for which the parameter $\alpha$ is $\alpha = \frac1{2r-1}$, see Examples~\ref{ex: uniform distribution on words} and~\ref{ex: uniform distribution on tuples}. In this section, the measure $\RR_n$ is the uniform distribution on $\Red_n$.

The results of Section~\ref{sec: general statements} above allow us to retrieve many of the results in Section~\ref{sec: two classical models} --- typically the results on the small cancellation property $C'(\lambda)$ up to density $\frac\lambda2$, whether one considers tuples of cyclically reduced words of length $n$ or of length at most $n$ ---, and to expand them. In particular, we show that the results on the central tree property and malnormality in the few-generator model can be extended to the density model, and that we have a phase transition theorem for the central tree property (at density $\frac14$).

\medskip\noindent\textbf{Small cancellation properties}\enspace
Let $0 < d < 1$. In the density model, at density $d$, we choose uniformly at random a $\nu(n)$-tuple of cyclically reduced words of length $n$, with $\nu(n) = |\calC_n|^d$. In particular, for every tuple $\vec h$ of that sort, we have $\Nbr(\vec h) = \nu(n)$ and $\Max(\vec h) = \Min(\vec h) = n$.

Let $0 < \lambda < \frac12$ and for each $n$, let
$$\eta_n \enspace=\enspace \left(\frac{2r}{2r-1}\right)^{2d}n^2\ (2r-1)^{-(\lambda - 2d)n} \enspace+\enspace \left(\frac{2r}{2r-1}\right)^{d}n^2\ (2r-1)^{-(\lambda - d)n}.$$
Note that $|\calC_n| < |\Red_n| = \frac{2r}{2r-1}(2r-1)^n$. Therefore $\Nbr^2\Max^2\alpha^{\lambda\Min} <  \eta_n$ with probability 1. Now observe that $\eta_n$ converges exponentially fast to 0 when $d < \frac\lambda2$. In view of Theorem~\ref{thm: general small cancellations}, this provides a proof of part of Theorem~\ref{thm: phase transitions}~(2), namely, of the fact that, at density less than $\frac\lambda2$, Property $C'(\lambda)$ holds exponentially generically.

It is unclear whether the more difficult property, that hyperbolicity holds generically at density less than $\frac12$, can be established with the same very general tools. 

Observe that the set $\Red_{\le n}$ of reduced words of length at most $n$ has cardinality $1+\sum_{i=1}^n|\Red_n| = \frac r{r-1}(2r-1)^n - \frac1{r-1}$. By the same reasoning as above, at density less than $\frac\lambda2$, a tuple of cyclically reduced words of length at most $n$ exponentially generically has Property $C'(\lambda)$.

\medskip\noindent\textbf{Properties of subgroups}\enspace
We now return to tuples of reduced words like in the few-generator model, but with a density type assumption on the size of the tuples. 
For $0 < d < 1$, we consider $|\Red_{\le n}|^d$-tuples of reduced words of length at most $n$, and the asymptotic properties of the subgroups generated by these tuples. For such tuples $\vec h$, we have $\Nbr(\vec h) \le \big(\frac{r}{r-1}\big)^d(2r-1)^{dn}$ and $\Max(\vec h) = n$.

In addition, for every $0 < \mu < 1$, Proposition~\ref{prop: few generators basic} shows that $\Min(\vec h) > \mu n$, exponentially generically.

We first establish the central tree property.

\begin{proposition}\label{prop: uniform ctp at density}
Let $0 < d < \frac14$. At density $d$, a tuple of reduced words of length at most $n$ chosen uniformly at random, exponentially generically has the central tree property, and in particular it is a basis of the subgroup it generates.

If $d>\frac14$, then at density $d$ the central tree property exponentially generically does not hold.
\end{proposition}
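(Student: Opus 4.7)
The plan is to prove the two halves separately: the positive direction by invoking Theorem~\ref{thm: general small heart}, and the negative direction by a direct birthday-paradox argument on long enough prefixes. For $d<\frac14$, I would apply Theorem~\ref{thm: general small heart}. The sequence is prefix-heavy with $\alpha=(2r-1)^{-1}$, we have $\Nbr(\vec h)=\Theta((2r-1)^{dn})$, and the discussion preceding the statement gives $\Min(\vec h)>\mu n$ exponentially generically for every $\mu\in(0,1)$. Picking $\mu\in(4d,1)$---possible exactly because $d<\frac14$---yields on this generic event
\[
\Nbr^2\,\alpha^{\Min/2}\ \le\ C\,(2r-1)^{(2d-\mu/2)n},
\]
whose exponent is strictly negative, so setting $\eta_n=(2r-1)^{(2d-\mu/2)n/2}$ verifies both conditions of \eqref{eq: small heart} with exponential convergence. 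Theorem~\ref{thm: general small heart} then delivers the central tree property exponentially generically, and Proposition~\ref{fact: central tree} ensures that $\vec h$ is a basis of $\langle\vec h\rangle$.

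For $d>\frac14$, my plan is to show directly that $\Lcp(\vec h)\ge\lfloor n/2\rfloor$ exponentially generically, which together with the trivial bound $\Min(\vec h)\le n$ forces $\Lcp(\vec h)\ge\Min(\vec h)/2$ and contradicts the central tree property. Set $t=\lfloor n/2\rfloor$ and $N=|\Red_{\le n}|^d$, and isolate the sub-tuple $\vec h'\le\vec h$ of coordinates of length exactly $n$. Since each coordinate of $\vec h$ has length $n$ independently with probability $p_n=|\Red_n|/|\Red_{\le n}|\to 2(r-1)/(2r-1)>0$, a Chernoff bound gives $|\vec h'|\ge p_nN/2$ with probability $1-e^{-\Omega(N)}$, hence more than exponentially generically. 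Conditional on the lengths, each element of $\vec h'$ is uniform on $\Red_n$, and a direct count shows that its length-$t$ prefix is uniform on $\Red_t$, whose size is $\Theta((2r-1)^{n/2})$. The classical birthday bound then yields
\[
\PP(\text{no two prefixes of $\vec h'$ coincide})\ \le\ \exp\!\Bigl(-\binom{|\vec h'|}{2}\big/|\Red_t|\Bigr)\ =\ \exp\!\Bigl(-\Omega\!\bigl((2r-1)^{(2d-1/2)n}\bigr)\Bigr),
\]
which is super-exponentially small for $d>\frac14$. Thus a prefix collision inside $\vec h'\subseteq\vec h$ occurs exponentially generically, giving $\Lcp(\vec h)\ge t\ge\Min(\vec h)/2$.

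The main technical ingredient is the lower bound $\Min(\vec h)>\mu n$ for arbitrary $\mu<1$ used in the positive direction: the threshold $\frac14$ arises precisely from being able to push $\mu$ close to $1$ in this bound, against the requirement $\mu/2>2d$ coming out of Theorem~\ref{thm: general small heart}. The negative direction is more robust and only relies on Chernoff concentration of the number of full-length coordinates together with a standard birthday bound, both of which converge doubly-exponentially fast.
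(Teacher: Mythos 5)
Your positive direction matches the paper's almost exactly: same invocation of Theorem~\ref{thm: general small heart}, same use of $\Min(\vec h) > \mu n$ from Proposition~\ref{prop: few generators basic} with $\mu$ chosen in $(4d,1)$; the only cosmetic difference is that you take $\eta_n$ to be the square root of the exponential bound, whereas the paper takes $\eta_n$ to be the bound itself, and both choices satisfy \eqref{eq: small heart}.

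For the negative direction the strategy (restrict to full-length coordinates, Chernoff to show there are exponentially many, then the birthday bound on length-$t$ prefixes) is the same as the paper's, and your treatment of the variable-length issue via Chernoff is actually spelled out more carefully than in the paper, which implicitly restricts to long-enough words. However there is a small but genuine gap in your choice of $t$. Taking $t=\lfloor n/2\rfloor$ only gives $\Lcp(\vec h)\ge\lfloor n/2\rfloor$, and when $n$ is odd and $\Min(\vec h)=n$ (the typical case) this yields $2\Lcp(\vec h)\ge n-1$, which is compatible with $2\Lcp(\vec h)<n=\Min(\vec h)$, so the central tree property is \emph{not} contradicted. Your inequality ``$t\ge\Min(\vec h)/2$'' simply fails in this case. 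The fix is trivial but must be made: take $t=\lceil(n+1)/2\rceil$, or, as the paper does, take $t=2d'n$ for some fixed $d'$ with $\frac14<d'<\min(d,\frac12)$ so that $4d'>1$ strictly; the birthday exponent $(2d-\frac12)n$ (resp.\ $(2d-2d')n$) remains positive and the conclusion $2\Lcp(\vec h)\ge 2t>n\ge\Min(\vec h)$ follows cleanly. With that adjustment the argument is complete and in line with the paper's.
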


\begin{proof}
For a fixed $\mu < 1$, the following inequality holds exponentially generically:
$$\Nbr^2\alpha^{\frac\Min2} \le \left(\frac{r}{r-1}\right)^{2d} (2r-1)^{-(\frac\mu2-2d)n}.$$
At every density $d < \frac14$, one can choose $\mu < 1$ such that $\frac\mu2-2d > 0$ (say, $\mu = \frac{1+4d}2$). For such a value of $\mu$, $\eta_n = \left(\frac{r}{r-1}\right)^{2d} (2r-1)^{-(\frac\mu2-2d)n}$ converges exponentially fast to 0 and, in view of Theorem~\ref{thm: general small heart}, this proves the first part of the proposition.

If $d>\frac14$, let $d'$ be such that $\frac14 < d' < \min(\frac12,d)$. By the classical Birthday Paradox\footnote{If $E$ is a set of size $M$ and $x$ is a uniform random tuple of $E^{m}$, the probability that the coordinates of $x$ are pairwise distinct is $(1-\frac1{M})(1-\frac2{M})\cdots(1-\frac{m-1}{M})$, which is
at most $\exp(-\frac{m(m-1)}{2M})$ by direct calculations.},  exponentially generically two words of the tuple share a prefix of length $2d'n$. This prove the second part of the proposition.
\end{proof}

Along the same lines, we also prove the following result.

\begin{proposition}\label{prop: uniform malnormal at density}
Let $0 < d < \frac1{16}$. At density $d$, a tuple of reduced words of length at most $n$ chosen uniformly at random, exponentially generically generates a malnormal subgroup.
\end{proposition}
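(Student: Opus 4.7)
The proposal is to apply Theorem~\ref{thm: general malnormal} directly, by checking the hypothesis~\eqref{eq: generic condition malnormal} in the uniform density model. The prefix-heaviness parameter for the uniform measures on reduced words is $\alpha = \tfrac{1}{2r-1}$ (Example~\ref{ex: uniform distribution on words}). For a uniform $\nu(n)$-tuple $\vec h$ of reduced words of length at most $n$ with $\nu(n) = |\Red_{\le n}|^d$, both $\Nbr(\vec h) = \nu(n) = \Theta((2r-1)^{dn})$ and $\Max(\vec h) \le n$ are deterministic, so the only random quantity to control is $\Min(\vec h)$.

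To bound $\Min(\vec h)$ from below, I would mimic the first half of the proof of Proposition~\ref{prop: uniform ctp at density}: for any $\mu \in (0, 1-d)$, a union bound gives
\[
\PP_n\bigl(\Min(\vec h) \le \mu n\bigr) \;\le\; \nu(n)\,\frac{|\Red_{\le \mu n}|}{|\Red_{\le n}|} \;=\; O\bigl((2r-1)^{(\mu + d - 1)n}\bigr),
\]
which is exponentially small. Since $d < \tfrac{1}{16}$, I would then select $\mu$ with $16 d < \mu$ and $\mu$ close enough to $1$ that the previous lower bound on $\Min$ remains exponentially generic. On the high-probability event $\{\Min(\vec h) > \mu n\}$ we would then have
\[
\Nbr(\vec h)^2\,\Max(\vec h)^2\,\alpha^{\Min(\vec h)/8} \;\le\; O\bigl(n^2 (2r-1)^{(2d - \mu/8)n}\bigr),
\]
which decays exponentially because $\mu/8 > 2d$. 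Setting $\eta_n$ equal to this upper bound verifies~\eqref{eq: generic condition malnormal} with exponential rate, and Theorem~\ref{thm: general malnormal} delivers the exponential-generic malnormality claim.

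The main obstacle is the simultaneous compatibility of $\mu > 16d$ and the constraint $\mu < 1-d$ forced by the $\Min$-estimate: taken at face value, this only gives $d < \tfrac{1}{17}$. Closing the gap up to $\tfrac{1}{16}$ requires replacing the crude $\Min/8$ threshold of Theorem~\ref{thm: general malnormal} by the sharper factor-length bound of Proposition~\ref{prop: sufficient malnormality}, namely $\tfrac12(\Min - 3\Lcp)$, after first establishing a generic estimate $\Lcp(\vec h) = O(dn)$ via a birthday-paradox argument in the spirit of the final part of the proof of Proposition~\ref{prop: uniform ctp at density}. This is consistent with the authors' remark that $\tfrac{1}{16}$ is unlikely to be optimal, so one should not expect the present argument to yield a tight bound.
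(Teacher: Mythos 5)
Your approach is the same as the paper's: apply Theorem~\ref{thm: general malnormal} after verifying hypothesis~\eqref{eq: generic condition malnormal} via a deterministic bound on $\Nbr,\Max$ and a union-bound estimate of $\Min$. But you have correctly spotted a constraint that the paper's printed proof leaves implicit, and it is a genuine soft spot. For a $|\Red_{\le n}|^d$-tuple of reduced words of length \emph{at most} $n$, the bound $\Min(\vec h) > \mu n$ holds exponentially generically only when $\mu < 1-d$: the failure probability is at most $\Nbr \cdot |\Red_{\le \mu n}|/|\Red_{\le n}| = \Theta\bigl((2r-1)^{(d+\mu-1)n}\bigr)$, and the matching lower bound shows the constraint is tight. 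The paper appeals to Proposition~\ref{prop: few generators basic} (which is stated for fixed $k$, not exponential tuples) and then selects $\mu = \tfrac{1+16d}{2}$; this choice satisfies $\mu < 1-d$ only when $d < \tfrac{1}{18}$, and no admissible $\mu$ can satisfy both $\mu > 16d$ and $\mu < 1-d$ once $d \ge \tfrac1{17}$. So your conclusion that the $\Min/8$ argument delivers only $d < \tfrac{1}{17}$ is correct, and the published proof of the $\tfrac1{16}$ bound is not quite complete as written. (If the tuples consisted of words of length exactly $n$, so that $\Min = n$ deterministically, then $d < \tfrac1{16}$ would indeed follow immediately from $\Nbr^2\Max^2\alpha^{n/8}\to 0$; the gap is specific to the length-at-most-$n$ model.) Your proposed repair — replacing the $\Min/8$ shortcut with the sharper sufficient condition of Proposition~\ref{prop: sufficient malnormality}, together with a generic $\Lcp(\vec h) = 2dn + o(n)$ estimate — is exactly the right move. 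If you carry it out, you find that $\Lcp \approx 2dn$ and $\Min \approx (1-d)n$ give the critical factor length $\tfrac12(\Min - 3\Lcp) \approx \tfrac12(1-7d)n$, and the condition $\tfrac12(1-7d) > 2d$ yields $d < \tfrac1{11}$, which exceeds the $\tfrac1{16}$ of the statement. In short: your argument is correct for what it proves, and your diagnosis of where the paper's proof needs tightening is accurate.
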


\begin{proof}
For a fixed $\mu < 1$, we have
$$\Nbr^2\Max^2\alpha^{\frac\Min8} \le \left(\frac{r}{r-1}\right)^{2d} n^2 (2r-1)^{-(\frac\mu8-2d)n},$$
exponentially generically.

If $d < \frac1{16}$, one can choose $\mu < 1$ such that $\frac\mu8 -2d > 0$ (say, $\mu = \frac{1+16d}2$), and we conclude as above, letting
$$\eta_n =  \left(\frac{r}{r-1}\right)^{2d} n^2 (2r-1)^{-(\frac\mu8-2d)n}$$
and using Theorem~\ref{thm: general malnormal}.
\end{proof}

\begin{remark}
Propositions~\ref{prop: uniform ctp at density} and~\ref{prop: uniform malnormal at density} above generalize Corollary~\ref{cor: few generators}~(1) and~(2), from the few generator case to an exponential number of generators --- up to density $\frac14$ and $\frac1{16}$, respectively (see Proposition~\ref{prop: inherited properties}).
\end{remark}

Proposition~\ref{prop: uniform ctp at density} can actually be radically refined if the tuples have less than exponential size and if we drop the requirement of exponential genericity.

\begin{proposition}\label{prop: very small central trees}
Let $f$ be an unbounded non-decreasing integer function. Let $k > 1$ be a fixed integer. Then a $k$-tuple $\vec h$ of reduced words of length at most $n$ chosen uniformly at random, generically has the central tree property, with $\Lcp(\vec h) \le f(n)$.

Let $c, c' > 0$ such that $c' \log(2r-1) > 2c$. Then an $n^c$-tuple $\vec h$ of reduced words of length at most $n$ chosen uniformly at random, generically has the central tree property, with $\Lcp(\vec h) \le c'\log n$.
\end{proposition}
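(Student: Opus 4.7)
The plan is to combine a union-bound estimate on prefix collisions with the lower bound on $\Min(\vec h)$ that separately controls the central tree property. Write $\alpha = \frac{1}{2r-1}$. An elementary counting argument in the spirit of Example~\ref{ex: uniform distribution on words} shows that under the uniform distribution on $\Red_{\le n}$, for every reduced word $w$ of length $t \le n$, a uniform random element of $\Red_{\le n}$ admits $w$ as a prefix with probability $O(\alpha^t)$; summing over $w \in \Red_t$, two independent uniform random elements of $\Red_{\le n}$ share a common prefix of length at least $t$ with probability $O(\alpha^t)$, and Corollary~\ref{lm: cyclically reduced} provides the same bound for a single uniform random word and its inverse.

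For the first statement, the $k$-tuple $\vec h$ has independent uniform coordinates in $\Red_{\le n}$, so a union bound over the $O(k^2)$ pairs of distinct signed words in $\vec h^\pm$ gives
$$\PP_n\left(\Lcp(\vec h) \ge f(n)\right) = O\left(k^2\, \alpha^{f(n)}\right),$$
which tends to $0$ since $f$ is unbounded. Intersecting with the exponentially generic event $2\Lcp(\vec h) < \Min(\vec h)$ from Corollary~\ref{cor: few generators}~(1) yields both the central tree property and $\Lcp(\vec h) \le f(n)$ generically.

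For the second statement, the same union bound over the $O(n^{2c})$ pairs of signed words in $\vec h^\pm$ yields
$$\PP_n\left(\Lcp(\vec h) \ge c'\log n\right) = O\left(n^{2c}\, \alpha^{c'\log n}\right) = O\left(n^{2c - c'\log(2r-1)}\right),$$
which tends to $0$ under the hypothesis $c'\log(2r-1) > 2c$. The central tree property on this event reduces to $\Min(\vec h) > 2c'\log n$: the probability that a uniform element of $\Red_{\le n}$ has length at most $2c'\log n$ is $O(\alpha^{n - 2c'\log n})$, and a union bound over the $n^c$ coordinates of $\vec h$ shows that $\PP_n(\Min(\vec h) \le 2c'\log n) = O(n^c\, \alpha^{n-2c'\log n})$, which decreases exponentially. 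Combining both estimates concludes the proof.

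There is no significant obstacle: once prefix-heaviness for the uniform measure on $\Red_{\le n}$ is in hand, everything reduces to elementary union bounds. The only notable feature is that, because $f$ may grow arbitrarily slowly, the convergence in the first statement gives no quantitative rate, which is precisely why the proposition only asserts genericity rather than polynomial or exponential genericity as in earlier results of the section.
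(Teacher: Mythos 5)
Your proof is correct, and it reaches the same estimates as the paper but by a more elementary route. The paper's proof invokes the general machinery of Proposition~\ref{prop: heartsize}: it observes that $\Min(\vec h) > \mu n$ exponentially generically, bounds $\Nbr^2 \alpha^{f(\Min)}$ (resp.\ $\Nbr^2 \alpha^{c'\log(\mu n)}$) by $\eta_n = k^2(2r-1)^{-f(\mu n)}$ (resp.\ $n^{2c}(2r-1)^{-c'\log n} = n^{-(c'\log(2r-1)-2c)}$), and applies the proposition. You instead unpack the same computation directly as a union bound over the $O(k^2)$ (resp.\ $O(n^{2c})$) pairs in $\vec h^\pm$, which is really what sits inside the proof of Proposition~\ref{prop: heartsize}. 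One small advantage of your route: Proposition~\ref{prop: heartsize} carries the hypothesis $f(\ell) \le \ell/2$, which the unbounded function $f$ in the statement need not satisfy, so the paper's one-line reduction implicitly needs to cap $f$ at $\ell/2$; your approach separates the estimate on $\Lcp$ from the verification of the central tree property (via Corollary~\ref{cor: few generators} in the first case, and a direct exponential bound on $\P(\Min \le 2c'\log n)$ in the second), so no such cap is needed. Both arguments correctly explain why one only gets plain genericity in the first statement (no rate, since $f$ may grow arbitrarily slowly) and polynomial genericity in the second.
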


\begin{proof}
If $k$ is a fixed integer, then as in the proof of Proposition~\ref{prop: uniform ctp at density}, we find that, for each $\mu < 1$, $\Nbr^2\alpha^{f(\Min)}$ is generically less than or equal to $\eta_n = k^2(2r-1)^{-f(\mu n)}$, which tends to $0$. This concludes the proof on the size of the central tree of random $k$-tuples by Proposition~\ref{prop: heartsize}.

If we now consider $n^c$-tuples, we find that, for each $\mu < 1$, $\Nbr^2\alpha^{c'\log(\mu n))}$ is generically less than or equal to $\eta_n = n^{2c}(2r-1)^{-c'\log n} = n^{-(c'\log(2r-1)-2c)}$, which tends to 0. By Proposition~\ref{prop: heartsize} again, this concludes the proof.
\end{proof}

\section{Markovian automata}\label{sec: automata probabilities}

We now switch from the very general settings of the previous section to a specific and computable
way to define prefix-heavy sequences of measures on reduced words. 

We introduce Markovian automata (Section~\ref{sec: Markovian definition}) which determine prefix-heavy sequences of measures under a simple and natural non-triviality assumption. These automata are a form of hidden Markov chain, and when they have a classical ergodicity property, then cyclically reduced words have asymptotically positive density. We are then able to generalize the results of Section~\ref{sec: applications to uniform} about central tree property and malnormality.

In the last part of the section, we give a generalization of Theorem~\ref{thm: phase transitions}~(2) and~(3) on small cancellation and the degeneracy of a finite presentation.

\subsection{Definition and examples}\label{sec: Markovian definition}

A \emph{Markovian automaton}\footnote{%
  This notion is different from the two notions of probabilistic
  automata, introduced by Rabin \cite{1963:Rabin} and Segala and Lynch
  \cite{1995:SegalaLynch}, respectively.}
$\calA$ consists of 
\begin{itemize}
\item a deterministic transition system $(Q, \cdot)$ on alphabet $X$,
  where $Q$ is a finite non-empty set called the \emph{state set}, and
  for each $q\in Q$, $x\in X$, $q\cdot x \in Q$ or $q\cdot x$ is
  undefined;
  
  \item an initial probability vector $\gamma_0\in [0,1]^Q$, that is, a positive vector such that $\sum_{q\in Q}\gamma_0(q) = 1$;
  
  \item for each $p\in Q$, a probability vector $(\gamma(p,x))_{x\in X} \in [0,1]^X$, such that $\gamma(p,x) = 0$ if and only if $p\cdot x$ is undefined.
\end{itemize}

If $u = x_0\cdots x_n \in X^*$ ($n\ge 0$), we write $\gamma(q,u) =
\gamma(q,x_0)   \gamma(q\cdot x_0,x_1) \cdots \gamma(q\cdot(x_0\cdots
x_{n-1}),x_n)$. We let $\gamma(q,u)=1$ if $u$ is the empty word. We also
write $\gamma_0(u) = \sum_{q\in Q}\gamma_0(q)  \gamma(q,u)$.

Markovian automata are very similar to hidden Markov chain models, except that symbols are output on transitions instead of on states. We will discuss this further in Section~\ref{sec: local} below. Markovian automata can be considered as more intuitive since sets of words (languages) are naturally described by automata.

We observe that, for each $n\ge 0$, $\sum_{|u| = n}\gamma(u) = 1$. Thus $\gamma$ determines a probability measure $\RR_n$ on the set of elements of $X^*$ of length $n$: if $|u| = n$, then $\RR_n(u) = \gamma(u)$.

In the sequel, we consider only Markovian automata on alphabet $\tilde A$, where only reduced words have non-zero probability. More precisely, the \emph{support} of a Markovian automaton $\calA$ is the set of words that can be read in $\calA$, starting from a state $q$ such that $\gamma_0(q) \ne 0$, that is, the set of all words $u$ such that $\gamma(u) \ne 0$: we assume that our Markovian automata are such that their support is contained in $\Red$.

\begin{example}\label{ex:uniform} 
\emph{Uniform distribution on reduced words of length $n$.}
It is immediately verified that the following Markovian automaton yields the uniform distribution on reduced words of each possible length. The state set is $Q = \tilde A$. For each $a\in \tilde A$, there is an $a$-labeled transition from every state except $a\inv$, ending in state $a$. All these transitions have the same probability, namely $\frac1{2r-1}$, and the initial probability vector is uniform as well, with each coordinate equal to $\frac1{2r}$.

One can also tweak these probabilities, to favor certain letters over others, or to favor positive letters (the letters in $A$) over negative letters.
\end{example}

\begin{example}\label{ex:psl2} \emph{Distributions on rational subsets of $F(A)$.}
The support of a Markovian automaton $\calA$ is always rational and closed under taking prefixes, but it does not have to be equal to the set of all reduced words. We can
consider a rational subset $L$ of $F(A)$, or rather a deterministic
transition system reading only reduced words, and impose probabilistic
weights on its transitions to form a Markovian automaton. The
resulting distribution gives non-zero weights only to prefixes of
elements of $L$.

\begin{figure}[htbp]
\begin{center}
\null\hfill
$(\calA)$\includegraphics[scale=1]{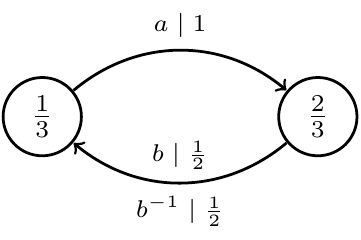}
\hfill
\includegraphics[scale=1]{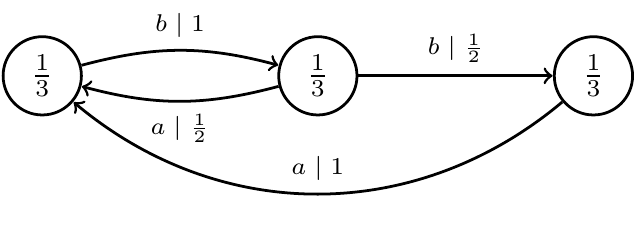}$(\calA')$
\hfill\null
\caption{Markovian automata $\calA$ and $\calA'$.\label{fig: 2 automata}}
\end{center}
\end{figure}
Figure~\ref{fig: 2 automata} represents two such automata (transitions are labeled by a letter and a probability, and each state is decorated with the corresponding initial probability), which are related with the modular group, $PSL(2,\Z) = \langle a,b \mid a^2, b^3\rangle$.

The support of the distribution defined by automaton $\calA$ is the
set of words over alphabet $\{a,b,b\inv\}$ without occurrences of the
factors $a^2$, $b^2$, $(b\inv)^2$, $bb\inv$ and $b\inv b$, and the
support of the distribution defined by $\calA'$ consists of the words
on alphabet $\{a,b\}$, without occurrences of $a^2$ or $b^3$. Both are
regular sets of unique representatives of the elements of $PSL(2,\Z)$:
the first is the set of geodesics of $PSL(2,\Z)$, and also the set of
Dehn-reduced words with respect to the given presentation of that
group; the second is a set of quasi-geodesics of $PSL(2,\Z)$.  Notice
that the distribution produced by $\calA'$ is not uniform on words of
length $n$ of its support.
\end{example}

Example~\ref{ex:uniform} shows that the sequence $(\RR_n)_n$ of uniform measures on reduced words, discussed in Sections~\ref{sec: two classical models} and~\ref{sec: applications to uniform} can be specified by a Markovian automaton. We also know that this sequence is prefix-heavy (Example~\ref{ex: uniform distribution on words}). This is a general fact, under mild assumptions on the Markovian automaton.

\begin{proposition}\label{prop: Markovian is prefix-heavy}
Let $\calA$ be a Markovian automaton and let $(\RR_n)_n$ the sequence of probability measures it determines. If $\calA$ does not have a cycle with probability 1, then $(\RR_n)_n$ is a prefix-heavy sequence of measures, with computable parameters $(C,\alpha)$.
\end{proposition}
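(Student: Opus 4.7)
The plan is to reduce the prefix-heaviness estimate to a uniform exponential bound on the one-point quantities $\gamma(p,v)$ and then to deduce such a bound via a pigeonhole + simple-cycle argument on the finite automaton $\calA$. First, using determinism and the identity $\sum_{|z|=k}\gamma(p,z) = 1$ valid for every state $p$ and every $k$, one computes $\RR_n(\Pref(w)) = \gamma_0(w)$ for every word $w$ with $|w|\le n$, a quantity independent of $n$. Consequently, whenever $\RR_n(\Pref(u)) \ne 0$,
$$\RR_n(\Pref(uv)\mid \Pref(u)) \;=\; \frac{\gamma_0(uv)}{\gamma_0(u)} \;=\; \frac{\sum_{q\in Q}\gamma_0(q)\gamma(q,u)\,\gamma(q\cdot u,v)}{\sum_{q\in Q}\gamma_0(q)\gamma(q,u)}$$
is a convex combination of the values $\gamma(q\cdot u, v)$, and is therefore bounded above by $\max_{p\in Q}\gamma(p,v)$. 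It thus suffices to produce $C \ge 1$ and $\alpha \in (0,1)$, computable from $\calA$, such that $\gamma(p,v) \le C\alpha^{|v|}$ for every $p\in Q$ and every reduced $v$.

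Set $N = |Q|$ and let $\sigma$ be the maximum probability of a simple cycle in the underlying deterministic transition system (a closed path visiting no intermediate vertex twice), with the convention $\sigma := 0$ when $\calA$ has no cycles. Because a simple cycle of probability $1$ is in particular a cycle of probability $1$, the hypothesis forces $\sigma < 1$. The key intermediate claim is: for every $p\in Q$ and every reduced word $v$ with $|v| = N$, $\gamma(p,v) \le \sigma$. Indeed, if $\gamma(p,v) > 0$, the $v$-labeled path from $p$ traverses $N+1$ states, and by pigeonhole two of these coincide; selecting the earliest repetition isolates an embedded \emph{simple} sub-cycle whose contribution to the product of transition probabilities is at most $\sigma$, while every edge outside this sub-cycle contributes a factor $\le 1$.

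To conclude, for arbitrary $v$ of length $|v| = qN + r$ with $0\le r < N$, I factor $v = w_1 w_2 \cdots w_q w'$ with $|w_i| = N$ and apply the multiplicative law
$$\gamma(p,v) \;=\; \gamma(p,w_1)\,\gamma(p\cdot w_1, w_2)\cdots\gamma(p\cdot w_1\cdots w_{q-1}, w_q)\,\gamma(p\cdot w_1\cdots w_q, w').$$
The preceding paragraph bounds each of the $q$ chunk factors by $\sigma$, while the tail factor is at most $1$, so $\gamma(p,v) \le \sigma^q$. Using $q \ge |v|/N - 1$ gives $\gamma(p,v) \le \sigma^{-(N-1)/N}\bigl(\sigma^{1/N}\bigr)^{|v|}$, and the pair $(C,\alpha) = \bigl(\sigma^{-(N-1)/N},\sigma^{1/N}\bigr)$ answers the question. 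It is effectively computable, since the finitely many simple cycles of $\calA$ can be enumerated and their probabilities compared to obtain $\sigma$.

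The only non-routine step, and the main obstacle, is the length-$N$ lemma in the middle paragraph: one must verify that extracting a \emph{simple} sub-cycle from the first repetition along the path produces a sub-cycle of probability at most $\sigma$, and that the hypothesis genuinely yields $\sigma < 1$. Both points are quick (the first by taking the first revisited vertex, the second by the trivial inclusion of simple cycles into cycles), but they are the sole place where the hypothesis of the proposition enters. The rest is bookkeeping.
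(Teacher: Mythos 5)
Your proof is correct and follows the same high-level plan as the paper's: first reduce the conditional prefix estimate to a uniform exponential bound $\gamma(p,v) \le C\alpha^{|v|}$, then obtain that bound by exploiting the fact that long paths in a finite automaton must traverse simple cycles. The intermediate decomposition differs, though. The paper writes an arbitrary path as a product of cycles plus a simple residual path of length at most $|Q|$, writes each cycle as a composition of at least $|\kappa|/\ell$ elementary cycles (with $\ell$ the maximum elementary-cycle length), and bounds $\gamma(q,u) \le \delta^{(|u|-|Q|)/\ell}$ with $\delta$ the maximum elementary-cycle probability; this yields $\alpha = \delta^{1/\ell}$, which is potentially smaller (hence a sharper bound) than your $\sigma^{1/|Q|}$ whenever $\ell < |Q|$, since $\delta = \sigma$. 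You instead chop $v$ into length-$|Q|$ blocks and invoke pigeonhole inside each block to find an embedded simple cycle, giving $\gamma(p,v) \le \sigma^{\lfloor |v|/|Q|\rfloor}$; this is a cleaner, more self-contained argument that avoids having to justify the cycle-into-elementary-cycles decomposition. One cosmetic slip: from $q \ge |v|/N - 1$ you would get $C = \sigma^{-1}$, not $\sigma^{-(N-1)/N}$; the latter requires the tighter $q \ge (|v| - (N-1))/N$, which does hold. Either constant is $\ge 1$ and serves equally well.
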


\begin{proof}
Let $\ell$ be the maximum length of an elementary cycle (one that does
not visit twice the same state) and let $\delta$ be the maximum value
of $\gamma(q,\kappa)$ where $\kappa$ is an elementary cycle at state
$q$. Under our hypothesis, $\delta < 1$.

Every cycle $\kappa$ can be represented as a composition of at least
${|\kappa|}/{\ell}$ elementary cycles (here, the composition takes the
form of a sequence of insertions of a cycle in another). Consequently
$\gamma(q,\kappa) \le \delta^{\frac{|\kappa|}{\ell}}$. Finally, every
path can be seen as a product of cycles and at most $|Q|$
individual edges. So, if $u$ is a word and $q\in Q$, then $\gamma(q,u) \le
\delta^{\frac{|u|-|Q|}{\ell}}$, that is $\gamma(q,u) \le C\alpha^{|u|}$ where $C = \delta^{\frac{-|Q|}{\ell}}$ and $\alpha = \delta^{\frac1\ell}$.

Let $u,v$ be reduced words such that $uv$ is reduced and let $n\ge |uv|$. We have
\begin{align*}
\RR_n(\Pref(uv)) = \gamma_0(uv) &= \sum_{p\in Q} \gamma_0(p)\gamma(p,u)\gamma(p\cdot u,v) \\
&\le \left(\sum_{p\in Q} \gamma_0(p)\gamma(p,u)\right)\ C\alpha^{|v|}\\
&= \gamma_0(u)\ C\alpha^{|v|} = \RR_n(\Pref(u))\ C\alpha^{|v|},
\end{align*}
and hence $\RR_n(\Pref(uv) \mid \Pref(u)) \le C\alpha^{|v|}$, which concludes the proof.
\eop

\begin{remark}
The parameters $C$ and $\alpha$ described in the proof of Proposition~\ref{prop: Markovian is prefix-heavy} may be far from optimal. If $\beta < 1$ is a uniform bound on the probabilities of the transitions of $\calA$, then $\gamma_0(v), \gamma(q,v) \le \beta^{|v|}$ for each word $v$, and the computation in the proof above shows that $\RR_n(\Pref(uv) \mid \Pref(u)) \le \beta^{|v|}$. We will see in Section~\ref{sec: local} that we can be more precise under additional hypotheses.
\end{remark}

Now let $\calA$ be a Markovian automaton without a probability 1 cycle, such that the sequence of probability measures it induces is prefix-heavy with parameters $(C,\alpha)$. If $0 < d < 1$, we say that a tuple $\vec h$ of reduced words of length at most (resp. exactly) $n$ is chosen at random according to $\calA$, \emph{at $\alpha$-density $d$} if $\vec h$ consists of $\alpha^{-dn}$ words. Observe that this generalizes the concept discussed in Section~\ref{sec: density model} and~\ref{sec: applications to uniform}.

With the same proofs as in Section~\ref{sec: applications to uniform}, we have the following generalization of Propositions~\ref{prop: uniform ctp at density} and~\ref{prop: uniform malnormal at density} related to central tree property and malnormality.

\begin{corollary}\label{cor: Markovian ctp}
Let $\calA$ be a Markovian automaton without a probability 1 cycle, such that the induced sequence of probability measures is prefix-heavy with parameters $(C,\alpha)$.  Then a tuple of reduced words of length at most $n$ chosen at random according to $\calA$, at $\alpha$-density $d < \frac14$, exponentially generically has the central tree property.

At $\alpha$-density $d < \frac1{16}$, it exponentially generically generates a malnormal subgroup.
\end{corollary}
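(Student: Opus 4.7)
The plan is to follow the same template as the proofs of Propositions~\ref{prop: uniform ctp at density} and~\ref{prop: uniform malnormal at density}, with the role of $\frac{1}{2r-1}$ replaced by the generic prefix-heaviness parameter $\alpha$. By Proposition~\ref{prop: Markovian is prefix-heavy}, the induced sequence $(\RR_n)_n$ is indeed prefix-heavy with parameters $(C,\alpha)$, so the resulting tuple distribution fits the framework of Section~\ref{sec: proba model}. Directly from the $\alpha$-density assumption, every such tuple $\vec h$ satisfies $\Nbr(\vec h) = \alpha^{-dn}$, and by construction $\Max(\vec h) \le n$.

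The essential preliminary is the Markovian analog of Proposition~\ref{prop: few generators basic}: for every fixed $\mu<1$, the event $\Min(\vec h) > \mu n$ holds exponentially generically. For the $\alpha^{-dn}$ coordinates drawn independently, this reduces by a union bound to showing that a single word of length at most $n$ drawn via $\calA$ has length at least $\mu n$ with probability at least $1 - O(\alpha^{(1-\mu)n})$. I expect this to be the main obstacle: unlike the uniform case, where the counting argument on $|\Red_n|$ is immediate, the Markovian setting requires specifying the length distribution $\TT_n$ and deriving its concentration near $n$. Under the natural Markovian analog of Example~\ref{ex: uniform distribution on tuples}, the desired concentration follows from the exponential decay in $|v|$ of $\gamma_0(\Pref(v))$ guaranteed by prefix-heaviness, but the precise verification must be carried out carefully.

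Once this is in place, both assertions follow by direct substitution into Theorems~\ref{thm: general small heart} and~\ref{thm: general malnormal}. For the central tree property at $d<\frac14$, choose $\mu = \frac{1+4d}{2}$ so that $\frac\mu2 - 2d > 0$; then exponentially generically $\Nbr^2\alpha^{\Min/2} \le \alpha^{(\mu/2 - 2d)n}$, and taking $\eta_n = \alpha^{(\mu/2 - 2d)n}$ verifies the hypothesis of Theorem~\ref{thm: general small heart} with exponential convergence. For malnormality at $d < \frac1{16}$, choose $\mu = \frac{1+16d}{2}$ so that $\frac\mu8 - 2d > 0$; then exponentially generically $\Nbr^2\Max^2\alpha^{\Min/8} \le n^2\alpha^{(\mu/8 - 2d)n}$, and setting $\eta_n = n^2\alpha^{(\mu/8 - 2d)n}$ verifies the hypothesis of Theorem~\ref{thm: general malnormal}, again with exponential convergence, yielding malnormality exponentially generically.
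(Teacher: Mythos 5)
Your proposal reproduces the paper's own route, which is simply to substitute the Markovian prefix-heaviness parameters into Theorems~\ref{thm: general small heart} and~\ref{thm: general malnormal}, exactly as was done for the uniform distribution in Propositions~\ref{prop: uniform ctp at density} and~\ref{prop: uniform malnormal at density}; the paper's ``proof'' is a one-line assertion that the uniform-case computation carries over. The exponent arithmetic in your last paragraph (choice of $\mu$, choice of $\eta_n$, application of the two theorems) matches the paper's and is correct.

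The flaw is in the paragraph where you try to supply the Markovian analogue of ``$\Min(\vec h) > \mu n$ exponentially generically.'' You propose to derive this from ``the exponential decay in $|v|$ of $\gamma_0(\Pref(v))$ guaranteed by prefix-heaviness.'' This cannot work: prefix-heaviness is a property of the conditional measures $(\RR_\ell)_\ell$, i.e.\ of the law of a word given its length, whereas $\Min(\vec h)$ depends only on the marginal length distribution $\TT_n$. The bound $\RR_n(\Pref(v)) \le C\alpha^{|v|}$ constrains how probability mass is spread among words of a fixed length $n$; it says nothing about how much mass $\TT_n$ puts on short lengths. In Example~\ref{ex: uniform distribution on tuples}, the concentration of $\Min$ near $n$ comes from the explicit choice $\TT_n(\ell) \propto |\Red_\ell|$, which is a counting fact about $\Red$ and not a consequence of prefix-heaviness. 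You are right to flag that $\TT_n$ is left implicit in the Markovian ``length at most $n$'' model; the way to close the gap is either to impose the same $\TT_n$ as in the uniform ball model (the length-counting argument then transfers verbatim, since it concerns only lengths and not the conditional measures $\RR_\ell$), or to work with words of length exactly $n$, in which case $\Min = n$ deterministically and the whole step disappears.

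A secondary point worth tracking, both in your draft and in the paper's template: since $\Nbr = \alpha^{-dn}$ is exponential, the statement ``$\Min > \mu n$ exponentially generically'' under a ball-type $\TT_n$ holds only for $\mu < 1-d$ (a union bound over $\alpha^{-dn}$ coordinates, each of which has a short length with probability roughly $\alpha^{(1-\mu)n}$). This must be reconciled with the requirement $\mu > 4d$ (resp.\ $\mu > 16d$) coming from the exponent; in particular $\mu = \frac{1+4d}{2}$ satisfies $\mu < 1-d$ only when $d < \frac16$. So either the ``length exactly $n$'' convention should be adopted, or the admissible window for $\mu$ should be made explicit when substituting.
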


\subsection{Irreducible Markovian automata and coincidence probability}\label{sec: irreducible}\label{sec: local}

An $(n,n)$-matrix $M$ is said to be \emph{irreducible} if it has non-negative coefficients and, for every $i,j \le n$, there exists $s\ge 1$ such that $M^s(i,j) > 0$. Equivalently, this means that $M$ is not similar to a block upper-triangular matrix. We record the following general property of irreducible matrices.

\begin{lemma}\label{lm:eigenvalue}
Let $M$ be an irreducible matrix. Then its spectral radius $\rho$ is a (positive) eigenvalue with a positive eingenvector. In particular, there exist positive vectors $\vec v_{\min}$ and $\vec v_{\max}$ such that, componentwise,
$$\rho^n \vec v_{\min} \enspace\leq\enspace M^{n}  \vec 1 \enspace\leq\enspace  \rho^n \vec v_{\max}\quad\textrm{for all $n > 0$}$$
where $\vec 1$ is the vector whose coordinates are all equal to 1. Moreover, there exist $c_{\min}, c_{\max} > 0$ such that
$$c_{\min} \rho^n \enspace\leq\enspace \vec 1^t  M^{n}   \vec 1 \enspace\leq\enspace c_{\max} \rho^n\quad\textrm{for all $n > 0$.}$$
\end{lemma}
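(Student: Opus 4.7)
The plan is to derive the entire statement from the Perron--Frobenius theorem for irreducible non-negative matrices, which is exactly tailored to this setting.

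First, I would invoke the classical Perron--Frobenius theorem: since $M$ is irreducible and has non-negative coefficients, its spectral radius $\rho$ is a positive (simple) eigenvalue, and there exists a strictly positive eigenvector $\vec w$, normalized however is convenient, such that $M \vec w = \rho \vec w$. Iterating gives $M^n \vec w = \rho^n \vec w$ for every $n \ge 0$. This takes care of the first assertion of the lemma.

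For the componentwise bounds on $M^n \vec 1$, I would exploit that $\vec w$ is strictly positive, so its coordinates are bounded away from $0$ and from infinity. Concretely, there exist constants $0 < a \le b$ such that $a \vec w \le \vec 1 \le b \vec w$ componentwise. Applying $M^n$, which preserves componentwise inequalities because $M \ge 0$, and using $M^n \vec w = \rho^n \vec w$, I obtain
\[
a \rho^n \vec w \;\le\; M^n \vec 1 \;\le\; b \rho^n \vec w.
\]
Setting $\vec v_{\min} = a \vec w$ and $\vec v_{\max} = b \vec w$, both of which are positive, yields the desired inequality $\rho^n \vec v_{\min} \le M^n \vec 1 \le \rho^n \vec v_{\max}$.

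The scalar bounds on $\vec 1^t M^n \vec 1$ then follow by taking the dot product with $\vec 1$ on the left in the previous inequality: since $\vec 1 \ge 0$, this preserves the ordering, yielding
\[
\rho^n (\vec 1^t \vec v_{\min}) \;\le\; \vec 1^t M^n \vec 1 \;\le\; \rho^n (\vec 1^t \vec v_{\max}),
\]
so $c_{\min} = \vec 1^t \vec v_{\min}$ and $c_{\max} = \vec 1^t \vec v_{\max}$ work, both being strictly positive since $\vec v_{\min}$ and $\vec v_{\max}$ are strictly positive vectors.

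There is essentially no obstacle beyond appealing to Perron--Frobenius; the only mild point to mention is that irreducibility is used \emph{precisely} to guarantee the existence of a strictly positive (rather than merely non-negative) eigenvector associated to $\rho$, which is what allows the two-sided componentwise comparison with $\vec 1$. Without strict positivity, the lower bound $a \vec w \le \vec 1$ would fail.
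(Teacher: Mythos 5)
Your argument is correct and essentially identical to the paper's: both invoke Perron--Frobenius to get a positive eigenvector $\vec w$ for $\rho$, scale it to sit below and above $\vec 1$, apply $M^n$ (using monotonicity of multiplication by a non-negative matrix), and sum coordinates for the scalar bounds. No discrepancy to report.
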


\begin{proof}
We refer the reader to \cite[chap. 13, vol. 2]{1959:Gantmacher} for a comprehensive
presentation of the properties of irreducible matrices and in particular for the Perron-Frobenius theorem, which establishes that the spectral radius of $M$ is an eigenvalue with a positive eigenvector: let $\vec v_0$ be such an eigenvector, and let $\vec v_{\min}$ (resp. $\vec v_{\max}$) be appropriate multiples of $\vec v_0$ with all coefficients less than $1$ (resp. greater than $1$). Then we have, componentwise, $\rho^n \vec v_{\min} = M^n\vec v_{\min} \leq \vec  M^{n} \vec 1 \leq M^n \vec v_{\max} =\rho^n \vec v_{\max}$.

Let $c_{\min}$ (resp. $c_{\max}$) be the sum of the coefficients of $\vec v_{\min}$ (resp. $\vec v_{\max}$). Then, summing over all components of $M^n\vec v_{\min}$ and $M^n\vec v_{\max}$, we get $c_{\min} \rho^n \leq \vec 1^t  M^{n}   \vec 1 \leq c_{\max} \rho^n$.
\end{proof}

Going back to automata, we note that a \emph{Markov chain} can be naturally associated with a Markovian automaton: if $\calA$
is a Markovian automaton on alphabet $\tilde A$, with state set $Q$, we
define the Markov chain $M(\calA)$ on $Q$ as follows: its transition
matrix is given by $M(p,q) = \sum_{a\in \tilde A\textrm{ s.t. }p\cdot a = q}
\gamma(p,a)$ for all $p,q\in Q$, and its initial vector is $\gamma_0$.

We say that the Markov chain $M(\calA)$ (or, by extension, the Markovian automaton $\calA$), is \emph{irreducible} if this transition matrix is irreducible, which is
equivalent to the strong connectedness of $\calA$.  We note that, in that case, if $\calA$ does not consist of a simple cycle, then $\calA$ does not have a cycle of probability 1. In view of Proposition~\ref{prop: Markovian is prefix-heavy}, this implies that the sequence of probability measures determined by $\calA$ is prefix-heavy. We will see below (Proposition~\ref{prop: parameter if irreducible}) that we can give a precise evaluation of the parameters of this sequence.

To this end, we introduce the notion of local Markovian automata, where labels can be read on states instead of edges.

More precisely a Markovian automaton is \emph{local} if all the incoming transitions into a given state are labeled by the same letter: for all states $p,q$ and letters $a, b$, if $p\cdot a = q\cdot b$ then $a=b$. If $\A$ is a Markovian
automaton, let $\A'$ denote the local Markovian automaton obtained as
follows.
\begin{itemize}
\item its set of states is $Q' = \{(q,a)\in Q\times \tilde{A}\mid \exists p\in Q,\ p\cdot a = q\}$;
\item its transition function $\star$ is given by $(p,a)\star b = (q,b)$ if $p\cdot b = q$;
\item its initial probability vector $\gamma'_{0}$ is given by
\[
\gamma_{0}'\big((p,a)\big) =
\begin{cases}
\gamma_{0}(p) & \text{if }a\text{ is the least label of the transitions into }p \\
0 &\text{otherwise}
\end{cases}
\]
 (we fix an arbitrary order on $\tilde{A}$)
\item its transition probability vectors are given by $\gamma'\big((p,a),b\big) = \gamma(p,b)$.
\end{itemize}
\begin{figure}[htbp]
\centering
\includegraphics[scale=1.2]{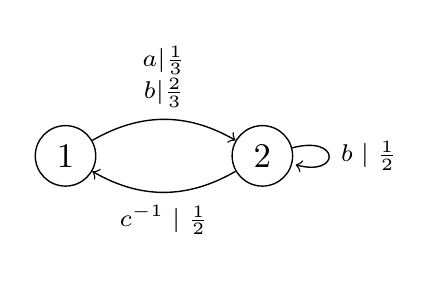}
\qquad
\includegraphics[scale=1.2]{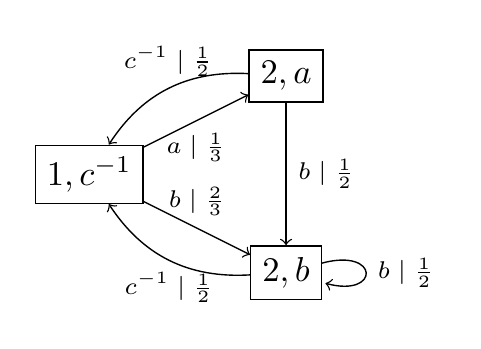}
\caption{A Markovian automaton and its associated local automaton.\label{fig:local}}
\end{figure}
\begin{proposition}\label{pro: local}
Let $\A$ be a Markovian automaton. Then the associated local Markovian automaton $\A'$ assigns
the same probability as $\A$ to every reduced word. Moreover, if $\A$ is irreducible, then so is
$\A'$.
\end{proposition}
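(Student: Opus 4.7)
The plan is to verify the two claims separately using one common observation: every path $p = p_0 \to p_1 \to \cdots \to p_n$ in $\A$ labeled by letters $c_1, \ldots, c_n$ lifts, for any choice of $a\in\tilde A$ with $(p,a)\in Q'$, to a unique corresponding path $(p,a), (p_1,c_1), (p_2,c_2), \ldots, (p_n,c_n)$ in $\A'$, with matching transition probabilities $\gamma'((p_{i-1}, c_{i-1}), c_i) = \gamma(p_{i-1}, c_i)$. Conversely, every path in $\A'$ projects to such a path in $\A$ by forgetting the second coordinate.

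For the first claim, I would compute $\gamma'_0(u)$ for a reduced word $u = b_1 \cdots b_n$ by summing over the starting states of $\A'$. Starting from $(p,a)$, the automaton $\A'$ deterministically reads $u$ along the sequence $(p,a), (p\cdot b_1, b_1), (p\cdot b_1 b_2, b_2), \ldots, (p\cdot u, b_n)$, contributing probability $\gamma(p,u)$ along the way. By the definition of $\gamma'_0$, for each $p\in Q$ exactly one state of the form $(p,a)$ carries a nonzero initial weight, namely $\gamma_0(p)$ at the pair where $a$ is the least label of an incoming transition to $p$. Summing the contributions then yields
\[
\gamma'_0(u) = \sum_{p\in Q} \gamma_0(p)\, \gamma(p,u) = \gamma_0(u),
\]
as desired.

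For the second claim, I would establish strong connectedness of the underlying directed graph of $\A'$, which is equivalent to irreducibility of the transition matrix of its associated Markov chain. Given two states $(p,a), (q,b)\in Q'$, the very definition of $Q'$ guarantees the existence of some $q'\in Q$ with $q'\cdot b = q$. Since $\A$ is strongly connected, there is a path in $\A$ from $p$ to $q'$, which by the lifting observation above produces a path in $\A'$ from $(p,a)$ to some state of the form $(q',c)$; appending the transition labeled $b$ then produces a path from $(p,a)$ to $(q,b)$. There is no serious obstacle in either step; the only subtle point is the bookkeeping convention of picking a single least label into each state $p$, which ensures that $p$ contributes $\gamma_0(p)$ exactly once to $\gamma'_0$ and avoids overcounting.
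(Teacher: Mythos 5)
Your proof is correct and takes essentially the same route as the paper's: you use the path-lifting/projection correspondence between $\A$ and $\A'$, note that the least-label convention makes each $p\in Q$ contribute $\gamma_0(p)$ exactly once to the sum, and for irreducibility you lift a path in $\A$ from $p$ to $q'$ (where $q'\cdot b=q$) and append the final $b$-transition. The paper states the first part more tersely ("by a simple induction on the length of the words") while you write out the sum $\gamma'_0(u)=\sum_{p}\gamma_0(p)\gamma(p,u)=\gamma_0(u)$ explicitly, but the argument is the same.
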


\begin{proof}
The first part of the statement follows directly from the definition, by a simple induction on the length of the words: indeed, we retrieve a path in $\A$ by forgetting
the second coordinate on the states of $\A'$; and every path of $\A$ starting at some state $q$, can be lifted uniquely to a path in $\A'$ starting at any vertex of the form $(q,a)$ of $\A'$.

Assume that $\A$ is irreducible and let $(p,a)$ and $(q,b)$ be states of $\A'$. By definition of $\A'$, there exists a state $q'$ of $\A$ such that $q'\cdot b = q$. Moreover, since $\A$ is irreducible, there exists
a path from $p$ to $q'$ in $\A$, say $p\xrightarrow{a_{1}}q_{1}\xrightarrow{a_{2}}\ldots\xrightarrow{a_t} q'$.
Then
$$(p,a)\xrightarrow{a_{1}}(q_{1},a_{1})\xrightarrow{a_{2}}\ldots\xrightarrow{a_t} (q',a_t) \xrightarrow{b} (q',b)$$
is a path in $\A'$ from $(p,a)$ to $(q,b)$, so $\A'$ is irreducible as well.
\end{proof}

If $\A$ is a Markovian automaton, we denote by $\MM_{\A}$ (or just $\MM$ when there is no ambiguity) the stochastic matrix associated with
its local automaton $\A'$:
\[
\MM\big((p,a),(q,b)\big) =
\begin{cases}
\gamma'\big((p,a),b\big) = \gamma(p,b) & \text{if $p\cdot b = q$}\\
0 &\text{otherwise.}
\end{cases}
\]
We also denote by $\MM_{[2]}$ and $\MM_{[3]}$ the matrices defined by
\begin{align*}
\MM_{[2]}\big((p,a),(q,b)\big) &= \Big(\MM\big((p,a),(q,b)\big)\Big)^2 \textrm{ and }\\
\MM_{[3]}\big((p,a),(q,b)\big) &=  \Big(\MM\big((p,a),(q,b)\big)\Big)^3,
\end{align*}
and by $\coincidence$ and $\alpha_{[3]}$ the largest eigenvalue of $\MM_{[2]}$ and $\MM_{[3]}$, respectively. The value $\coincidence$ is called the \emph{coincidence probability} of $\A$, and it will play an important role in the sequel.

Observe that if $\A$ is local, then $\A'$ is equal to $\A$, up to the name of the states. We are interested in local automata for the following properties.
\begin{lemma}\label{fact: local automaton}
Let $\A$ be a local Markovian automaton. Then the following holds%
\begin{itemize}
\item for all states $p,q$ there is at most one transition from $p$ to $q$;
\item two paths starting from the same state are labeled by the same word if and only if they go through the same states in the same order;

\item for every $\ell\ge 0$, we have $\MM^{\ell}(p,q) = \sum_{\substack{u\in\Red_{\ell}, p\cdot u = q}} \gamma(p,u)$, $\MM_{[2]}^{\ell}(p,q) = \sum_{\substack{u\in\Red_{\ell}, p\cdot u = q}} \gamma(p,u)^2$ and $\MM_{[3]}^{\ell}(p,q) = \sum_{\substack{u\in\Red_{\ell}, p\cdot u = q}} \gamma(p,u)^3$.
\end{itemize}
\end{lemma}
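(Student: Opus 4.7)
The plan is to handle the three bullets in sequence, the first two being structural and the third a straightforward matrix induction.

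For the first assertion, I would combine the two defining properties of a local Markovian automaton: \emph{locality} forces all transitions incoming to $q$ to share a common label $a$, while determinism of the underlying transition system forces $p\cdot a$ to be at most one state. Consequently, any transition $p\to q$, if it exists, is uniquely determined and must carry that label $a$.

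For the second assertion, the implication ``same states $\Rightarrow$ same labels'' follows directly by applying the first bullet to each pair of consecutive states along the path. The converse ``same labels $\Rightarrow$ same states'' is pure determinism of the transition system: two paths starting at the same state and reading the same letter reach the same successor, so iterating gives the same sequence of states.

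For the third assertion, I would argue by induction on $\ell$. The base case $\ell = 0$ is trivial, since $\MM^0$ is the identity and only the empty word contributes, with $\gamma(p,\epsilon) = 1$. For the inductive step, I would expand
\[
\MM^{\ell+1}(p,q) = \sum_{r \in Q} \MM^{\ell}(p,r)\,\MM(r,q),
\]
apply the induction hypothesis to $\MM^{\ell}(p,r)$, and observe that locality forces $\MM(r,q) = \gamma(r,b)$ for the unique letter $b$ with $r\cdot b = q$ (and is zero otherwise). The multiplicative identity $\gamma(p,u)\gamma(r,b) = \gamma(p,ub)$, valid when $r = p\cdot u$, then rearranges the double sum into $\sum_{v,\,p\cdot v = q,\,|v|=\ell+1} \gamma(p,v)$. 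For $\MM_{[2]}$ and $\MM_{[3]}$ the same argument goes through verbatim, using $\gamma(p,u)^{k}\gamma(r,b)^{k} = \gamma(p,ub)^{k}$ for $k = 2, 3$.

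The one point requiring care is to justify that the resulting sum is actually restricted to reduced words $v \in \Red_{\ell+1}$. I would invoke the standing assumption that the support of $\A$ is contained in $\Red$, which together with locality prevents any path out of $p$ from carrying a non-reduced label; combined with the path/word bijection established in the second bullet, this forces the sum to live in $\Red_{\ell+1}$. I do not expect this to be a real obstacle, but it is the sole place where the reduced-support hypothesis enters, and should be signalled explicitly.
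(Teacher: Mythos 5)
The paper does not supply a proof for this lemma — it is stated as a routine observation immediately before Proposition~\ref{prop: parameter if irreducible} — so there is no argument in the paper to compare against. Your proof is correct and fills the gap, and all three bullets are handled cleanly: locality plus determinism for the first, the first bullet plus determinism for the second, and an $\ell$-induction using $\gamma(p,u)\gamma(p\cdot u,b)=\gamma(p,ub)$ for the third.

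One refinement on the point you flag yourself. You justify restricting the sum to $\Red_{\ell+1}$ by the standing hypothesis that the support of $\calA$ lies in $\Red$. As the paper formally defines it, the \emph{support} is the set of $u$ with $\gamma_0(u)\ne 0$, i.e.\ words readable from a state $q$ with $\gamma_0(q)>0$. That does not literally say that every word readable from every state $p$ is reduced, which is what the induction actually needs. Locality does not close this gap (it constrains incoming labels, not cancellation along a path). What does close it is either strict positivity of $\gamma_0$ or, more robustly, irreducibility: if $u$ is readable from $p$ and $p$ is reachable from some $q_0$ with $\gamma_0(q_0)>0$ via a word $w$, then $\gamma_0(wu)>0$, so $wu$ is in the support and hence reduced, forcing $u$ to be reduced as well. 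Since the lemma is only invoked for irreducible (hence strongly connected) automata, this is harmless, but the justification should be routed through reachability rather than through locality.
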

We can now give an upper bound for the parameters of the sequence of probability measures determined by an irreducible Markovian automaton.

\begin{proposition}\label{prop: parameter if irreducible}
Let $\calA$ be an irreducible Markovian automaton with coincidence probability $\coincidence$, and let $(\RR_n)_n$ be the sequence of probability measures it determines. If $\calA$ does not consist of a single cycle, then there exists a constant $C > 0$ such that $(\RR_n)_n$ is prefix-heavy with parameters $(C,\coincidence^{1/2})$.
\end{proposition}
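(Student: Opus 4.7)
The plan is to first replace $\calA$ by its associated local automaton $\calA'$, which by Proposition~\ref{pro: local} induces the same measures, is irreducible if $\calA$ is, and (since the out-degree of each state is unchanged when passing to the local version) is a single cycle if and only if $\calA$ is. So I may assume $\calA$ is itself local, and then follow the template of the proof of Proposition~\ref{prop: Markovian is prefix-heavy}: writing $\gamma_0(uv) = \sum_p \gamma_0(p)\gamma(p,u)\gamma(p\cdot u,v)$ and dividing by $\gamma_0(u) = \RR_n(\Pref(u))$, it suffices to exhibit a uniform bound $\gamma(q,v) \le C\coincidence^{|v|/2}$ holding for every state $q$ and every reduced word $v$ with $q\cdot v$ defined.

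To get this sharper bound, I would invoke Lemma~\ref{fact: local automaton}: the quantity $\gamma(q,v)^2$ appears as a single summand in $\MM^{|v|}_{[2]}(q,q\cdot v) = \sum_{u\in\Red_{|v|},\, q\cdot u=q\cdot v}\gamma(q,u)^2$, hence $\gamma(q,v)^2 \le (\MM^{|v|}_{[2]}\vec 1)_q$. Since the irreducibility of $\calA$ makes $\MM$ irreducible, and $\MM_{[2]}$ has exactly the same zero/non-zero pattern as $\MM$, the matrix $\MM_{[2]}$ is itself an irreducible non-negative matrix with spectral radius $\coincidence$ by definition; applying Lemma~\ref{lm:eigenvalue} to it yields a positive vector $\vec v_{\max}$ with $\MM^n_{[2]}\vec 1 \le \coincidence^n \vec v_{\max}$ componentwise for every $n$. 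Setting $C = \sqrt{\|\vec v_{\max}\|_\infty}$ then gives the desired bound $\gamma(q,v) \le C\coincidence^{|v|/2}$, and substituting into the decomposition above produces prefix-heaviness with parameters $(C,\coincidence^{1/2})$.

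The main obstacle, and the only point where the ``not a single cycle'' hypothesis enters, is to ensure that $\coincidence < 1$; otherwise $\coincidence^{1/2}$ would not qualify as a decay rate for a prefix-heavy scheme. The row sum $\sum_q \MM(p,q)^2$ is at most $\bigl(\sum_q \MM(p,q)\bigr)^2 = 1$, with equality precisely when row $p$ of the stochastic matrix $\MM$ has a single non-zero entry (necessarily equal to $1$). If this were the case for every row, each state of $\calA'$ would have out-degree exactly one, and strong connectedness would force $\calA'$, and hence $\calA$, to consist of a single cycle, contradicting the hypothesis. So at least one row sum of $\MM_{[2]}$ is strictly less than $1$, and a standard Perron-Frobenius argument (looking at the set $I$ of indices where the positive Perron eigenvector attains its maximum, and showing that irreducibility together with the eigenvalue equation forces $I$ to be invariant under $\MM_{[2]}$ and thus to equal the whole index set, which in turn forces all row sums to be equal) shows that the spectral radius $\coincidence$ must be strictly less than $1$.
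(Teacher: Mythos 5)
Your proof is correct and follows the same approach as the paper's: reduce to the local automaton, bound $\gamma(q,v)^2$ by an entry of $\MM_{[2]}^{|v|}$ via Lemma~\ref{fact: local automaton}, invoke Lemma~\ref{lm:eigenvalue} for the $\coincidence^{|v|}$ estimate, and conclude as in Proposition~\ref{prop: Markovian is prefix-heavy}. Your final paragraph supplies a step the paper's proof leaves tacit, namely that the ``not a single cycle'' hypothesis is exactly what guarantees $\coincidence < 1$ (so that $\coincidence^{1/2}$ is a legitimate prefix-heaviness parameter), and your row-sum plus Perron--Frobenius argument for this is correct.
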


\begin{proof}
Let $v$ be a reduced word of length $\ell$ and let $q\in Q$ be a state of $\calA$. By Lemma~\ref{fact: local automaton}, we have
$$\gamma(q,v) = \sqrt{\gamma(q,v)^2} \le \sqrt{\MM_{[2]}^\ell(q,q\cdot v)} \le \sqrt{\vec 1^t \MM_{[2]}^\ell \vec 1}.$$
Lemma~\ref{lm:eigenvalue} then shows that there exists $C > 0$ such that $\gamma(q,v) \le C \coincidence^{\frac\ell2}$. We can now conclude as in the proof of Proposition~\ref{prop: Markovian is prefix-heavy}.
\end{proof}

This yields the following refinement of Corollary~\ref{cor: Markovian ctp}.

\begin{corollary}\label{cor: irreducible Markovian ctp}
Let $\calA$ be a Markovian automaton without a probability 1 cycle and with coincidence probability $\coincidence$. Then a tuple of reduced words of length at most $n$ chosen at random according to $\calA$, at $\coincidence$-density $d < \frac18$ (resp. $d < \frac1{32}$), exponentially generically has the central tree property (resp. generates a malnormal subgroup).
\end{corollary}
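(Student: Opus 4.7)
The plan is to deduce this corollary directly from Corollary~\ref{cor: Markovian ctp} after refining the prefix-heaviness parameter thanks to Proposition~\ref{prop: parameter if irreducible}. The content of the statement is a sharpening of the density thresholds, obtained by replacing the crude bound on $\alpha$ coming from Proposition~\ref{prop: Markovian is prefix-heavy} by the spectral bound $\alpha = \coincidence^{1/2}$ available when $\calA$ is irreducible (I assume the implicit irreducibility hypothesis suggested by the corollary's label and by the invocation of a coincidence probability; otherwise the statement should explicitly assume that $\calA$ is irreducible, since Proposition~\ref{prop: parameter if irreducible} requires it).

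First I would apply Proposition~\ref{prop: parameter if irreducible} to obtain a constant $C>0$ such that the induced sequence $(\RR_n)_n$ is prefix-heavy with parameters $(C,\coincidence^{1/2})$. Setting $\alpha := \coincidence^{1/2}$, this is exactly the hypothesis required by Corollary~\ref{cor: Markovian ctp}. The only remaining point is to reconcile the two notions of density: a tuple drawn at $\coincidence$-density $d$ consists, by definition, of $\coincidence^{-dn}$ words, and since
\[
\coincidence^{-dn} = (\coincidence^{1/2})^{-2dn} = \alpha^{-(2d)n},
\]
such a tuple is the same object as a tuple drawn at $\alpha$-density $2d$.

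It then suffices to plug the translated density into Corollary~\ref{cor: Markovian ctp}. The central tree property holds exponentially generically at $\alpha$-density strictly less than $\tfrac14$; the condition $2d < \tfrac14$ is equivalent to $d < \tfrac18$, which is the first assertion. Similarly, malnormality of the generated subgroup holds exponentially generically at $\alpha$-density strictly less than $\tfrac1{16}$; the condition $2d < \tfrac1{16}$ is equivalent to $d < \tfrac1{32}$, which is the second assertion.

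There is no real obstacle here: the proof is a one-line parameter substitution and an invocation of Corollary~\ref{cor: Markovian ctp}. The non-trivial work has already been done upstream, namely in the Perron--Frobenius estimate \`a la Lemma~\ref{lm:eigenvalue} used in Proposition~\ref{prop: parameter if irreducible}, which bounds $\gamma(q,v)$ by $C\,\coincidence^{|v|/2}$ through the matrix $\MM_{[2]}$, and in Theorems~\ref{thm: general small heart} and~\ref{thm: general malnormal}, on which Corollary~\ref{cor: Markovian ctp} itself rests.
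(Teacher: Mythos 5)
Your proof is correct and matches the paper's intended argument exactly: the corollary is stated without proof, as an immediate consequence of applying Proposition~\ref{prop: parameter if irreducible} to sharpen the prefix-heaviness parameter to $\alpha = \coincidence^{1/2}$, then invoking Corollary~\ref{cor: Markovian ctp} with the rescaled $\alpha$-density $2d$ (so $2d < \tfrac14$ gives $d < \tfrac18$, and $2d < \tfrac1{16}$ gives $d < \tfrac1{32}$). Your observation that the statement implicitly requires irreducibility — needed to apply Proposition~\ref{prop: parameter if irreducible} and reflected in the corollary's label — is a legitimate and careful reading of the hypotheses.
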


\subsection{Ergodic Markovian automata}\label{sec: ergodic}

If the Markovian automaton $\calA$ is irreducible and if, in addition,
for all large enough $n$, $M(\calA)^n(q,q) > 0$ for each $q\in Q$, we
say that $\calA$ (resp. $M(\calA)$) is \emph{ergodic}. This is
equivalent to stating that $\calA$ has a collection of loops of
relatively prime lengths, or also that all large enough integral powers
of $M(\calA)$ have only positive coefficients.
If $\calA$ is ergodic, we can apply a classical theorem on Markov chains, which states that there exists a \emph{stationary vector} $\tilde\gamma$ such that the
distribution defined by $\calA$ converges to that stationary vector
exponentially fast (see \cite[Thm 4.9]{2009:LevinPeresWilmer}). In the
vocabulary of Markovian automata, this yields the following theorem.

If $u\in \tilde A^*$ has length $n$, let $Q_n^p(u) = p\cdot u$ be the state of $\calA$
reached after reading the word $u$ starting at state $p$. We treat $Q_n^p$ as a random variable.

\begin{theorem}\label{thm ergodique}
  Let $\calA$ be an ergodic Markovian automaton on
  alphabet $\tilde A$, with state set $Q$ ($|Q|\geq 2$). For each $q\in Q$, the limit $\lim_{n\to\infty} \RR_n[Q_n^p = q]$ exists, and if we denote it by $\tilde\gamma(q)$, then $\tilde\gamma$ is a probability vector (called the \emph{stationary vector}). In addition, there exist $K>0$
  and $0 < c <1$, such that $|\RR_n[Q_n^p = q] - \tilde\gamma(q)| < K
  c^n$ for all $n$ large enough.
\end{theorem}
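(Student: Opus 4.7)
The plan is to reduce the statement to the classical convergence theorem for ergodic finite Markov chains. First I would observe that, by definition of the Markovian automaton, the quantity $\RR_n[Q_n^p = q]$ equals $M^n(p,q)$, where $M = M(\calA)$ is the stochastic transition matrix on $Q$ defined earlier (summing the $\gamma(p,a)$ over letters $a$ with $p\cdot a = q$). Indeed $Q_n^p = q$ is the event that the randomly sampled length-$n$ word takes $p$ to $q$, and summing $\gamma(p,u)$ over all such $u \in \Red_n$ gives precisely $M^n(p,q)$ since at each step the transition probabilities add up correctly regardless of the label. The ergodicity hypothesis on $\calA$ is exactly the statement that $M$ is irreducible and aperiodic.

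Next I would invoke the Perron--Frobenius theorem for ergodic stochastic matrices. Being stochastic, $M$ has $1$ as an eigenvalue with right eigenvector $\mathbf 1$; irreducibility of $M$ forces $1$ to be a simple eigenvalue; and aperiodicity guarantees that every other eigenvalue $\lambda$ of $M$ satisfies $|\lambda| < 1$. Let $\rho < 1$ denote the maximum modulus of these subdominant eigenvalues. The associated left eigenvector of $M$ for the eigenvalue $1$, normalized so that its entries sum to $1$, is the stationary vector $\tilde\gamma$; its entries are non-negative by Perron--Frobenius, and positive since $M$ is irreducible.

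Then I would use the spectral decomposition $M^n = \mathbf 1 \tilde\gamma^t + N_n$, where $N_n$ is the contribution from the invariant complement of $\mathrm{span}(\mathbf 1)$. Standard linear algebra (or a Jordan-block argument) yields $\|N_n\| \le K_0 \cdot n^{s}\,\rho^n$ for some integer $s \ge 0$ and constant $K_0 > 0$, where $s+1$ bounds the size of the largest Jordan block for a subdominant eigenvalue. Fixing any $c$ with $\rho < c < 1$, we have $n^s \rho^n = o(c^n)$, and hence $|N_n(p,q)| \le K c^n$ for some $K > 0$ and all $n$ large enough. Taking the $(p,q)$-entry gives
\[
\bigl|\RR_n[Q_n^p = q] - \tilde\gamma(q)\bigr| = |N_n(p,q)| \le K c^n,
\]
which proves both the convergence and the exponential rate. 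The fact that $\tilde\gamma$ is a probability vector follows by passing to the limit in the identity $\sum_q M^n(p,q) = 1$.

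The main obstacle, such as it is, is simply marshalling the Perron--Frobenius / convergence-to-equilibrium result in the right form; there is no genuine difficulty because the authors' definition of ergodicity for $\calA$ coincides with the classical one for $M(\calA)$. Since the paper already cites \cite[Thm 4.9]{2009:LevinPeresWilmer} for exactly this statement, the proof is essentially a translation step followed by a direct appeal to that theorem.
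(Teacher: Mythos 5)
Your proof is correct and follows the same route as the paper: translate $\RR_n[Q_n^p = q]$ into the $n$-step transition probabilities $M^n(p,q)$ of the associated chain $M(\calA)$, observe that the paper's definition of ergodicity is precisely irreducibility plus aperiodicity of $M(\calA)$, and then appeal to the classical convergence theorem for ergodic finite Markov chains. The paper's proof is exactly this citation (to \cite[Thm 4.9]{2009:LevinPeresWilmer}, whose own proof is a coupling argument); you additionally sketch a spectral/Perron--Frobenius argument with a Jordan-block estimate, which is a standard alternative and yields the same conclusion (and is consistent with the paper's subsequent remark that $c$ comes from the subdominant spectrum, once the possible polynomial factor from a nontrivial Jordan block is absorbed by taking $c$ strictly between $\rho$ and $1$, as you do).
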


\begin{remark}
  The constant $c$ in Theorem~\ref{thm ergodique} is the maximal
  modulus of the non-1 eigenvalues of $M(\calA)$.
\end{remark}

\begin{example}
The Markovian automaton discussed in Example~\ref{ex:uniform}, relative to the uniform distribution on reduced words of length $n$, is ergodic. Its stationary vector $\tilde\gamma$ is equal to $\gamma_0$ ($\tilde\gamma(q) = \frac1{2r}$ for every state $q$), and the constant $c$ is $\frac1{2r-1}$.

On the other hand, the Markovian automaton $\calA$ in Example~\ref{ex:psl2} is irreducible but not ergodic (loops have even lengths), and it does not have a stationary vector.
\end{example}

We use Theorem~\ref{thm ergodique} to show that, under a very mild additional hypothesis, an ergodic Markovian automaton yields a prefix-heavy sequence of measures $(\RR_n)_n$ such that $\liminf\RR_n(\calC) > 0$.

\begin{proposition}\label{Markovian, liminf C}
Let $\calA$ be an ergodic Markovian automaton, with initial vector $\gamma_0$ and stationary vector $\tilde\gamma$ and let $(\RR_n)_n$ be the sequence of measures it induces on reduced words. If $\sum_{a\in\tilde A}\gamma_0(a)\tilde\gamma(a\inv) \ne 1$, then $\liminf\RR_n(\calC) > 0$.
\end{proposition}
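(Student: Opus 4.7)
The plan is to exploit the mixing property of ergodic Markov chains to show that the first and last letters of a random word of length $n$ are asymptotically independent, and to compute $\lim_n \RR_n(\calC_n)$ explicitly. The starting observation is that, for $n\ge 2$, a reduced word $u = u_1\cdots u_n$ is cyclically reduced precisely when $u_1 \ne u_n^{-1}$, so
\[
\RR_n(\calC_n) \;=\; 1 \;-\; \sum_{a\in\tilde A} \RR_n[u_1 = a,\ u_n = a^{-1}].
\]

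First I would decompose the joint probability $\RR_n[u_1 = a,\, u_n = b]$ by conditioning on the initial state $q$ and on the state reached just before reading the last letter. Writing $Q_{n-2}^{p}$ for the state reached after $n-2$ transitions starting from state $p$, the Markov property gives
\[
\RR_n[u_1 = a,\, u_n = b] \;=\; \sum_{q\in Q} \gamma_0(q)\,\gamma(q,a) \sum_{q'\in Q} \RR_{n-2}\big[Q_{n-2}^{q\cdot a} = q'\big]\,\gamma(q',b).
\]
Then I would invoke Theorem~\ref{thm ergodique}, which yields $\RR_{n-2}[Q_{n-2}^{p} = q'] \to \tilde\gamma(q')$ exponentially fast as $n\to\infty$; the convergence is automatically uniform in $p$ since $Q$ is finite. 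Introducing the extension of $\tilde\gamma$ to the alphabet $\tilde A$ by $\tilde\gamma(b) := \sum_{q'\in Q} \tilde\gamma(q')\,\gamma(q',b)$, in exact analogy with the extension $\gamma_0(a) = \sum_q \gamma_0(q)\,\gamma(q,a)$ used in Section~\ref{sec: Markovian definition}, this leads to
\[
\lim_n \RR_n[u_1 = a,\, u_n = b] \;=\; \gamma_0(a)\,\tilde\gamma(b).
\]

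Setting $b = a^{-1}$ and summing over $a\in\tilde A$, I would conclude that
\[
\lim_n \RR_n(\calC_n) \;=\; 1 \;-\; \sum_{a\in\tilde A} \gamma_0(a)\,\tilde\gamma(a^{-1}).
\]
Both $(\gamma_0(a))_{a\in\tilde A}$ and $(\tilde\gamma(a))_{a\in\tilde A}$ are probability vectors on $\tilde A$ (each sums to $1$ because the rows $(\gamma(q,\cdot))$ do), hence $\sum_a \gamma_0(a)\,\tilde\gamma(a^{-1}) \le \sum_a \gamma_0(a) = 1$. The hypothesis forces this sum to be strictly less than $1$, so the limit above is strictly positive, giving $\liminf_n \RR_n(\calC) > 0$.

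The only subtle point in this plan is notational rather than technical: one must make precise the meaning of $\tilde\gamma(a^{-1})$ for a letter (as the extension of the stationary vector from $Q$ to $\tilde A$ in the same manner $\gamma_0$ is extended) and verify that this extension is a probability vector. Once this convention is fixed, the body of the argument is a standard mixing computation for finite-state ergodic Markov chains, with uniformity in the starting state provided directly by Theorem~\ref{thm ergodique} together with the finiteness of $Q$.
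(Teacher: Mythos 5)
Your proof is correct and takes essentially the same approach as the paper: decompose $\RR_n(\calC_n)$ via the first and last letters of a word, condition on the initial state and the state reached after $n-2$ transitions, and invoke Theorem~\ref{thm ergodique} to show $\lim_n \RR_n(\calC_n) = 1 - \sum_{a\in\tilde A}\gamma_0(a)\tilde\gamma(a\inv)$. Your explicit definition of the extension $\tilde\gamma(b) := \sum_{q'\in Q}\tilde\gamma(q')\gamma(q',b)$ to letters, and the verification that it is a probability vector, make visible a convention the paper leaves implicit, but this is a presentational refinement rather than a different argument.
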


Observe that the sum $\sum_{a\in\tilde A}\gamma_0(a)\tilde\gamma(a\inv)$ is less than 1, since we are dealing with probability vectors, unless there exists a (necessarily single) letter $a$ such that $\gamma_0(a) = \tilde\gamma(a\inv) = 1$.

\begin{proof}
The set $\calC$ of cyclically reduced words is the complement in $\Red$ of the disjoint union of the sets $a\tilde A^*a\inv$ ($a\in \tilde A$). Now we have
\begin{align*}
\RR_n(a\tilde A^*a\inv) &=  \sum_{p\in Q} \gamma_0(p)\gamma(p,a) \left(\sum_{|u| = n-2}\gamma(p\cdot a,u)\gamma(p\cdot(au),a\inv)\right) \\
&= \sum_{p\in Q} \gamma_0(p)\gamma(p,a) \left(\sum_{q\in Q} \RR_n(Q_{n-2}^{p\cdot a} = q) \gamma(q,a\inv)\right) \\
&= \sum_{p\in Q} \gamma_0(p)\gamma(p,a) \left(\sum_{q\in Q} (\tilde\gamma(q) + \epsilon(q,n)) \gamma(q,a\inv)\right),
\end{align*}
where $|\epsilon(q,n)| \le Kc^{n-2}$, with $K$ and $c$ given by Theorem~\ref{thm ergodique}. Then we have
$$\RR_n(a\tilde A^*a\inv) =  \gamma_0(a)\tilde\gamma(a\inv) + \gamma_0(a)\left(\sum_{q\in Q}\epsilon(q,n)\gamma(q,a\inv)\right)$$
and $\lim\RR_n(a\tilde A^*a\inv) = \gamma_0(a)\tilde\gamma(a\inv)$. It follows that
$$\lim\RR_n(\calC) = 1 - \sum_{a\in\tilde A}\gamma_0(a)\tilde\gamma(a\inv),$$
thus concluding the proof.
\eop

Proceeding as in Section~\ref{sec: applications to uniform}, we can use Proposition~\ref{Markovian, liminf C}, Corollary~\ref{prop: repeated cyclic factors in cyclically reduced words} and the results of Section~\ref{sec: general statements}, to generalize part of Theorem~\ref{thm: phase transitions}~(2), and show that, up to $\coincidence$-density $\frac\lambda4$, a tuple of cyclically reduced words of length at most $n$ chosen at random according to $\calA$, exponentially generically satisfies the small cancellation property $C'(\lambda)$. We will now see (Theorem~\ref{thm: phase}) that we can improve this bound, and go up to $\coincidence$-density $\frac\lambda2$.

\subsection{Phase transitions for the Markovian model}\label{sec: phase transitions}

We can now state a phase transition theorem, which generalizes parts of Theorem~\ref{thm: phase transitions}. Let us say that an ergodic Markovian automaton is \emph{non-degenerate} if its initial distribution $\gamma_0$ and its stationary vector $\tilde\gamma$ satisfy $\sum_{a\in\tilde A}\gamma_0(a)\tilde\gamma(a\inv) \ne 1$.

\begin{theorem}\label{thm: phase}
Let $\A$ be a non-degenerate ergodic Markovian automaton with coincidence probability $\coincidence$. Let $0 < d < 1$ and let $G$ be the group presented by a tuple $\vec h$ of cyclically reduced words of length $n$, chosen independently and at random according to $\A$, at $\coincidence$-density $d$. Then we have the following phase transitions:
\begin{itemize}
\item if $0 < \lambda < \frac12$ and $0 < d < \frac\lambda2$, then exponentially generically $\vec h$ satisfies the small cancellation property $C'(\lambda)$; if $\lambda = \frac16$, then $G$ is generically infinite and hyperbolic;
\item if $d > \frac\lambda2$ then exponentially generically $\vec h$ does not satisfy the small cancellation property $C'(\lambda)$;
\item if $d > \frac12$ then exponentially generically $G$ is degenerated in a sense that is made precise in Proposition~\ref{prop: degenerate subgroups}, and which implies that $G$ is a free group or the free product of a free group with $\Z/2\Z$.
\end{itemize} 
\end{theorem}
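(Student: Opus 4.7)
The plan is to treat the three statements separately, refining the general bounds of Section~\ref{sec: general statements} by exploiting the Markovian structure through the matrices $\MM_{[2]}$ and $\MM_{[3]}$ and their spectral radii $\coincidence$ and $\alpha_{[3]}$, rather than relying only on the prefix-heavy parameter $\alpha = \coincidence^{1/2}$ of Proposition~\ref{prop: parameter if irreducible}.

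For the positive small cancellation statement (case~(1)), the naive application of Theorem~\ref{thm: general small cancellations} with $\alpha = \coincidence^{1/2}$ would yield $C'(\lambda)$ only up to $\coincidence$-density $\lambda/4$. To gain the missing factor of $2$, I would estimate directly the probability that two independent random words share a common factor $v$ of length $t$ at prescribed positions. Passing to the local automaton (Proposition~\ref{pro: local}), this probability takes the form $\sum_v P_1(v) P_2(v)$ where $P_i(v)$ is a linear functional of the weights $\gamma(p,v)$. By Cauchy--Schwarz combined with Lemma~\ref{fact: local automaton} (which identifies $\sum_v \gamma(p,v)^2$ with the $\MM_{[2]}$-entries), this sum is at most $C\, \coincidence^t$, so the gain over the prefix-heavy bound comes from using $\MM_{[2]}$ directly. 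Summing over $O(\Nbr^2)$ pairs of words, $O(n^2)$ pairs of positions, and handling cyclic conjugates via Proposition~\ref{Markovian, liminf C} and Lemma~\ref{lemma: conditional probability}, the probability of a length-$\lambda n$ piece is bounded by $O(n^2 \coincidence^{(\lambda - 2d)n})$, which is exponentially small exactly when $d < \lambda/2$. When $\lambda = 1/6$, Proposition~\ref{prop: small cancellation properties} then yields the infinite hyperbolic conclusion.

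For the matching lower bound in case~(2), I would use a second moment argument applied to the number $X$ of length-$\lambda n$ pieces in $\vec h$. The upper bound above already gives $\E[X] = O(\coincidence^{(\lambda-2d)n})$; ergodicity combined with the positive Perron--Frobenius eigenvectors of $\MM_{[2]}$ supplies a matching lower bound of the same order, so $\E[X] \to \infty$ when $d > \lambda/2$. Applying the Paley--Zygmund inequality then reduces the task to a variance estimate: joint probabilities of two matching events at disjoint positions factor into a product of single-event probabilities and contribute $\E[X]^2$, while matching events at overlapping or near-overlapping positions are governed by $\MM_{[3]}$ and its spectral radius $\alpha_{[3]}$; a case analysis in the spirit of Lemma~\ref{lm: multiple overlap} is needed to bound the contribution of overlapping events by $o(\E[X]^2)$. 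For case~(3), I would follow the Gromov--Ollivier strategy: at $\coincidence$-density $d > 1/2$, a birthday-paradox argument (using $\coincidence$ in place of uniform probability) shows that with overwhelming probability, every cyclically reduced word $u$ of length slightly less than $n/2$ occurs as a prefix of two distinct cyclic conjugates $uv$ and $uv'$ of elements of $\vec h^\pm$; the quotient $v(v')^{-1}$ is then a short consequence of the relators, and iterating collapses all but a controlled residual structure, giving the presentation of Proposition~\ref{prop: degenerate subgroups}.

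The main obstacle will be the variance computation in case~(2): unlike the uniform case, the joint distribution of matching events at near-overlapping positions depends genuinely on the intermediate automaton states, so the spectral gap between $\coincidence$ and $\alpha_{[3]}$ (and the ergodicity of $\MM_{[2]}$) must be invoked rather precisely to keep the cross terms subdominant. Case~(3) will likewise require delicate tracking of the short residual words that survive the collapse argument, in order to distinguish the free quotient from the free product with $\Z/2\Z$ alternatives allowed by Proposition~\ref{prop: degenerate subgroups}.
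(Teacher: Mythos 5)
Your treatment of the first statement is essentially the paper's: Propositions~\ref{lm: Markovian factor} and~\ref{lm: Markovian factor and inverse} carry out exactly the Cauchy--Schwarz / $\MM_{[2]}$ argument you sketch (after passing to the local automaton via Proposition~\ref{pro: local}, identifying $\sum_u \gamma(p,u)^2$ with entries of $\MM_{[2]}^\ell$ via Lemma~\ref{fact: local automaton}, and conditioning on $\calC$ via Lemma~\ref{lemma: conditional probability}); this gives the $\coincidence^\ell$ bound for the two-word case, while the single-word case is handled with the cruder $\coincidence^{\ell/2}$ bound, which is already sufficient because that sum carries only a factor $N$ rather than $N^2$.

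For the second statement your plan is also a second moment method, but the paper's choice of random variable is different and substantially simpler. Rather than counting \emph{pieces} at all positions (which, as you note, forces you to control joint occurrences at overlapping positions), the paper restricts attention to common \emph{prefixes}: Proposition~\ref{pro: collisions} looks only at the $\binom N2$ random variables $X_{i,j} = |\mathrm{lcp}(h_i,h_j)|$ and uses Szpankowski's version of the second moment method for the maximum of such variables. Because every pair of variables either shares an index or is fully independent, the only cross term one must control is the triple-collision probability, which Lemma~\ref{lm: three prefixes} bounds by $\alpha_{[3]}^\ell$; the Karlin--Ost monotonicity $\alpha_{[3]}^{1/3}\le\coincidence^{1/2}$ then closes the estimate. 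A common prefix of length $\lambda n$ already gives a piece of length $\lambda n$, so this suffices to disprove $C'(\lambda)$. Your piece-counting variant would in principle work but buys nothing while making the variance bookkeeping considerably heavier: the prefix-based variable eliminates by design the dependency structure (distinct positions within a single word, overlapping windows) that you flag as the main obstacle.

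The third statement is where your proposal has a genuine gap. You want to import the Gromov--Ollivier strategy of showing that ``every cyclically reduced word of length close to $n/2$ occurs as a prefix of two cyclic conjugates''; but in the Markovian setting this is simply false, because the support of $\RR_n$ is restricted to the paths of $\calA$, so most words have probability zero and will never occur as a factor. This is why the paper's conclusion is weaker than in the uniform case (one cannot kill the letters in $D = A\setminus(E\cup E^{-1})$) and why the method must be different. The paper's route (Proposition~\ref{prop: getting degenerate subgroups}) is to show instead that any two transition labels $a,b$ of $\calA$ satisfy $a =_G b$: one conditions on the initial state, finds, via Proposition~\ref{pro: collisions} and repeated Chernoff bounds, a positive proportion of collision pairs $(h_i,h_j)$ with common prefix of length $n - t - s$ ending in a prescribed state $q$, then forces the two endings $x,y$ of length $s$ onto the pair, yielding $x =_G y$ letter by letter; finally irreducibility and the Chinese remainder theorem (loops of coprime length, from ergodicity) connect the $a$- and $b$-transitions by paths of equal length out of a common state. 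Your ``iterating the collapse'' step is too vague to recover this: it does not explain how to align paths starting in different states, nor how to track the residual free group $F(D)$ and the parity dependence ($\Z/2\Z$ versus trivial). The collision engine is the right one, but the combinatorics of the automaton must be used in the way the paper does, not in the uniform-distribution ``cover all words'' way.
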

The rest of the paper is devoted to the proof of Theorem~\ref{thm: phase}. The first statement is established in Proposition~\ref{pro: Markovian small cancel}, while  the second and third statements are  proved respectively in Propositions~\ref{prop: no small cancellation}  and~\ref{prop: degenerate subgroups}.

\subsection{Long common factors at low density}\label{sec: long factors}

In this section we estimate the probability that random words share a long common factor. More precisely, we show the following statement, the first part of Theorem~\ref{thm: phase}.

\begin{proposition}\label{pro: Markovian small cancel}
Let $\A$ be a non-degenerate ergodic Markovian automaton with coincidence probability $\coincidence$. Let $\lambda\in(0,\frac12)$ and let $d\in(0,\frac{\lambda}{2})$. A tuple of cyclically
reduced words of length $n$ taken independently and randomly according to $\A$, at $\coincidence$-density $d$, exponentially generically satisfies the small cancellation property $C'(\lambda)$.
\end{proposition}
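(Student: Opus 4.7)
The plan is to apply Theorem~\ref{thm: general small cancellations}, but with a sharpening that uses the coincidence probability $\coincidence$ directly rather than the weaker prefix-heavy parameter $\alpha = \coincidence^{1/2}$ provided by Proposition~\ref{prop: parameter if irreducible}. First, since $\A$ is ergodic it is irreducible, so (barring the trivial case of a single cycle, which is incompatible with ergodicity when $|Q| \ge 2$) Proposition~\ref{prop: parameter if irreducible} gives prefix-heaviness with parameters $(C, \coincidence^{1/2})$, and the non-degeneracy hypothesis combined with Proposition~\ref{Markovian, liminf C} ensures $\liminf_n \RR_n(\calC_n) > 0$. Plugging these directly into Theorem~\ref{thm: general small cancellations} with $\Nbr = \coincidence^{-dn}$ and $\Max = \Min = n$ yields a control quantity $\Nbr^2 \Max^2 \alpha^{\lambda \Min} = n^2\, \coincidence^{(\lambda/2 - 2d)n}$, which decays exponentially only for $d < \lambda/4$. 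To reach the advertised threshold $d < \lambda/2$, we must recover one factor of $1/2$ in the exponent by bypassing the generic bound.

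The recovery comes from a sharper estimate, specific to the Markovian setting, for the probability that two random length-$t$ factors coincide. The key quantity is $\Sigma_t(p,p') := \sum_{w \in \Red_t} \gamma(p, w)\, \gamma(p', w)$ for two states $p, p'$ of the associated local Markovian automaton (Proposition~\ref{pro: local}). This sum is the $((p,p'),\cdot)$ row-sum of the $t$-th power of the synchronization matrix $N$ with $N((p,p'),(q,q')) = \sum_{a\colon p\cdot a = q,\ p'\cdot a = q'}\gamma(p,a)\gamma(p',a)$; on the diagonal $p = p'$ this matrix coincides with $\MM_{[2]}$ (by Lemma~\ref{fact: local automaton}), and by an irreducibility/domination argument its spectral radius is at most $\coincidence$. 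Lemma~\ref{lm:eigenvalue} then yields a constant $C' > 0$ such that $\Sigma_t(p,p') \le C'\coincidence^t$ uniformly in $p, p'$. A parallel argument, applied to the synchronization of the chain with its time-reversal, gives the same bound $\O(\coincidence^t)$ for $\sum_{w \in \Red_t}\gamma(p,w)\gamma(p', w^{-1})$.

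Equipped with these estimates, I replay the proof of Theorem~\ref{thm: general small cancellations}. By direct adaptation of Lemmas~\ref{lm: factor repeated twice} and~\ref{lm: overlapping factors} (conditioning on the states reached at the two positions in question and summing over all such states), the probability that two specified non-overlapping positions in a random reduced word of length $n$ carry the same length-$t$ factor (or a factor and its inverse) is at most $C'\coincidence^t$; the probability of any non-overlapping repeat of a length-$t$ factor in a single word is at most $C'n^2\coincidence^t$; and the probability that two independently drawn words share a prescribed factor at prescribed positions is also $\O(\coincidence^t)$. The straddling/cyclic variants only add polynomial factors (cf.\ Section~\ref{sec: repeated cyclic factors}), and Corollary~\ref{prop: repeated cyclic factors in cyclically reduced words} transfers everything from $\Red_n$ to $\calC_n$ at the cost of the bounded constant $1/\liminf_n \RR_n(\calC_n)$.

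Taking $t = \lambda n$, $\Nbr = \coincidence^{-dn}$, $\Max = \Min = n$, each of the four bad events enumerated in the proof of Theorem~\ref{thm: general small cancellations} (common factor between $h_i$ and $h_j^{\pm 1}$, $i<j$; non-overlapping repeat within one $h_i$; factor and its inverse within one $h_i$; overlapping repeat within one $h_i$) is bounded by $\mathrm{poly}(n)\,\coincidence^{(\lambda-2d)n}$, which is exponentially small exactly when $d < \lambda/2$; summing gives $\vec h$ satisfying $C'(\lambda)$ exponentially generically. The main obstacle is the clean derivation of the $\coincidence^t$-bound, in particular justifying that the synchronization matrix $N$ and its ``inverse'' analogue both have spectral radius $\le \coincidence$, and handling the straddling cyclic factors; once these are in place, the density threshold $\lambda/2$ follows from a direct exponent count.
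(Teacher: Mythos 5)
You correctly diagnose that a direct application of Theorem~\ref{thm: general small cancellations} with the parameter $\alpha=\coincidence^{1/2}$ from Proposition~\ref{prop: parameter if irreducible} only reaches $d<\lambda/4$, and that the right sharper estimate is $\Sigma_t(p,p')=\sum_{u\in\Red_t}\gamma(p,u)\gamma(p',u)\le C'\coincidence^t$. But your justification of this bound is flawed, and your overall plan does more than the paper needs.

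On the bound: you claim $\rho(N)\le\coincidence$ for the synchronization matrix $N$ on state pairs ``by an irreducibility/domination argument''. That argument runs in the wrong direction. The diagonal pairs $(p,p)$ span an $N$-invariant subspace on which $N$ restricts to $\MM_{[2]}$, and the spectral radius of a nonnegative matrix is at least that of any principal submatrix --- so domination yields only $\rho(N)\ge\coincidence$, not the inequality you need. The paper gets the estimate without introducing $N$ at all: the Cauchy-Schwarz inequality gives $\Sigma_t(p,p')\le\|\vec\gamma_p(t)\|_2\,\|\vec\gamma_{p'}(t)\|_2$, and $\sum_{q}\|\vec\gamma_q(t)\|_2^2=\vec 1^t\MM_{[2]}^t\vec 1\le K\coincidence^t$ by Lemma~\ref{lm:eigenvalue}; this is precisely the content of Propositions~\ref{lm: Markovian factor} and~\ref{lm: Markovian factor and inverse}, which also treat explicitly the straddling cases you leave to ``direct adaptation''.

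On the overall plan: the paper does not prove a $\coincidence^\ell$ bound for within-word repeats at all. It applies the sharp Cauchy-Schwarz estimate only to the $\binom{N}{2}$ pairs of \emph{distinct} words sharing a cyclic factor (Corollary~\ref{cor: two cyclically reduced words multiple occurrences}). For repeats inside a \emph{single} word, Proposition~\ref{lm: two occurrences} simply invokes the generic prefix-heavy bound with parameter $\coincidence^{1/2}$, yielding $\O(\ell^2n^2\coincidence^{\ell/2})$; since there are only $N$ words, the union bound gives $\O(N\ell^2n^2\coincidence^{\ell/2})=\O(n^4\coincidence^{(\lambda/2-d)n})$, which is already exponentially small for every $d<\lambda/2$. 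Your attempt to push the $\coincidence^t$ estimate into the within-word case --- including its straddling and overlapping subcases --- is harder work that the structure of the bad-event decomposition makes unnecessary, since the $N^2$-versus-$N$ counting asymmetry is exactly what lets the paper mix a sharp bound in one case with a coarse bound in the other and still land on the threshold $\lambda/2$.
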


The structure of the proof of Proposition~\ref{pro: Markovian small cancel} resembles that of the proof of Theorem~\ref{thm: general small cancellations}, and requires the consideration of several cases. This is the object of the rest of Section~\ref{sec: long factors}.

To this end, we introduce additional notation: let $\vec\gamma_q(n)$ be the vector of coordinates $\gamma(q,u)$ when $u$ ranges over $\Red_n$ in lexicographic order, and let $\|\vec\gamma_q(n)\|_{k} = (\sum_{u\in\Red_{n}}\gamma(q,u)^{k})^{1/k}$ be the $\ell_{k}$-norm of this vector.
We start with an elementary result.

\begin{lemma}\label{fact: probability of a factor}
Let $\A$ be a Markovian automaton, let $0 < i,\ell < n$ be integers, and let $u \in \Red_\ell$. The probability $\gothicp$ that $u$ occurs as a cyclic factor at position $i$ in a reduced word of length $n$ is bounded above by
\[ 
\left \{
  \begin{array}{ll}
   \sum_{q\in Q} \gamma(q,u) & \text{ if  } i \le n-\ell+1 \\
 \sum_{q,q'\in Q} \gamma(q,u_1)\gamma(q',u_2)  & \text{ if  } i > n-\ell+1 \text{ and } u = u_1u_2 \text{ with } |u_1| = n-i+1
  \end{array}
\right.
\]
%
\end{lemma}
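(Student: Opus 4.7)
The plan is a direct expansion of $\gothicp$ using the Markovian chain rule $\gamma(p,uv)=\gamma(p,u)\gamma(p\cdot u,v)$, treating the two cases separately. The only nontrivial observation is the structure of the words that contribute to the event.

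In case $i\le n-\ell+1$, $u$ fits as an (ordinary) factor starting at position $i$, so every word contributing to $\gothicp$ decomposes uniquely as $w=w_1 u w_2$ with $|w_1|=i-1$ and $|w_2|=n-i-\ell+1$. Expanding gives
\[
\gothicp=\sum_{p\in Q}\gamma_0(p)\sum_{w_1\in\Red_{i-1}}\gamma(p,w_1)\,\gamma(p\cdot w_1,u)\sum_{w_2\in\Red_{n-i-\ell+1}}\gamma(p\cdot w_1\cdot u,w_2).
\]
I would then group the $w_1$-sum by the endpoint $q=p\cdot w_1$: the factor $\sum_{w_1:\,p\cdot w_1=q}\gamma(p,w_1)$ is an $(i-1)$-step transition probability, hence $\le 1$; similarly the innermost sum over $w_2$ is the probability of generating any valid continuation of length $n-i-\ell+1$ from $p\cdot w_1\cdot u$, hence $\le 1$. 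Combined with $\sum_p\gamma_0(p)=1$, this collapses the expression to the claimed bound $\gothicp\le\sum_q\gamma(q,u)$.

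In case $i>n-\ell+1$, $u$ must straddle, so writing $u=u_1u_2$ with $|u_1|=n-i+1$, the contributing words are exactly those of the form $w=u_2\, v\, u_1$ with $v\in\Red_{n-\ell}$, the suffix $u_1$ occupying positions $i,\dots,n$ and the prefix $u_2$ positions $1,\dots,|u_2|$. The same chain rule yields
\[
\gothicp=\sum_{p\in Q}\gamma_0(p)\,\gamma(p,u_2)\sum_{v\in\Red_{n-\ell}}\gamma(p\cdot u_2,v)\,\gamma(p\cdot u_2\cdot v,u_1).
\]
Grouping the $v$-sum by $q'=p\cdot u_2\cdot v$, the inner sum rewrites as a weighted sum of $\gamma(q',u_1)$ with weights (the $(n-\ell)$-step transition probabilities from $p\cdot u_2$ to $q'$) in $[0,1]$, so it is bounded by $\sum_{q'}\gamma(q',u_1)$. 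Bounding $\gamma_0(p)\le 1$ in the outer sum turns the remaining factor into $\sum_p\gamma(p,u_2)=\sum_q\gamma(q,u_2)$. Multiplying the two gives exactly $\sum_{q,q'}\gamma(q,u_1)\gamma(q',u_2)$.

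There is essentially no obstacle: the content reduces to correctly identifying the two-part decomposition forced by a straddling occurrence and then exploiting the fact that $\gamma_0$-values and $m$-step transition probabilities lie in $[0,1]$. The only care needed is bookkeeping of the indices $|u_1|=n-i+1$, $|u_2|=\ell+i-n-1$ and $|v|=n-\ell$ in the straddling case.
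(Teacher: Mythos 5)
Your proposal is correct and follows essentially the same argument as the paper: expand $\gothicp$ via the Markovian chain rule, regroup by the state reached before $u$ (resp.\ before $u_1$), and bound the resulting $m$-step transition probabilities and $\gamma_0$-weights by $1$. The only cosmetic difference is that in the non-straddling case you carry the explicit continuation sum over $w_2$ and note it is at most $1$, whereas the paper writes the event as $\RR_n(\tilde A^{i-1}u\tilde A^{n-\ell-i+1})$ so that sum never appears; the underlying computation is identical.
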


\begin{proof}
If $i \le n-\ell+1$, then $\gothicp = \RR_n(\tilde A^{i-1}u\tilde A^{n - \ell - i +1})$ is equal to
\begin{align*}
\sum_{p\in Q}\gamma_0(p)\sum_{w\in\Red_{i-1}}\gamma(p,w)\gamma(p\cdot w,u) &= \sum_{p\in Q}\gamma_0(p)\sum_{q\in Q}\sum_{\substack{w\in\Red_{i-1}\\p\cdot w = q}}\gamma(p,w)\gamma(q,u) \\
&= \sum_{p\in Q}\gamma_0(p) \sum_{q\in Q} \RR_{i-1}[Q_{i-1}^{p} = q]\gamma(q,u) \\
&\le \sum_{p,q\in Q} \gamma_0(p)\gamma(q,u) = \sum_{q\in Q} \gamma(q,u).
\end{align*}
If $i > n-\ell+1$ and $u = u_1u_2$ with $|u_1| = n-i+1$, then
\begin{align*}
\gothicp = \RR_n(u_2\tilde A^{n-\ell}u_1) &= \sum_{q'\in Q} \gamma_0(q')\gamma(q',u_2) \sum_{w\in\Red_{n-\ell}} \gamma(q'\cdot u_2,w) \gamma(q'\cdot u_2w, u_1)\\
&= \sum_{q'\in Q} \gamma_0(q')\gamma(q',u_2) \sum_{q\in Q}\sum_{\substack{w\in\Red_{n-\ell}\\ q'\cdot u_2w = q}} \gamma(q'\cdot u_2,w) \gamma(q, u_1) \\
&= \sum_{q'\in Q} \gamma_0(q')\gamma(q',u_2) \sum_{q\in Q} \RR_{n-\ell}[Q_{n-\ell}^{q'\cdot u_2} = q] \gamma(q, u_1) \\
&\le \sum_{q,q'\in Q} \gamma(q,u_1) \gamma(q', u_2),
\end{align*}
which concludes the proof.
\end{proof}

\begin{proposition}\label{lm: Markovian factor}
Let $\A$ be an irreducible Markovian automaton with coincidence probability $\coincidence$. Let $n$, $\ell$, $i$ and $j$ be positive integers such that $\ell\leq n$ and $i,j\leq n$.  Denote by $L(n,\ell,i,j)$ the probability
that two reduced words of length $n$ share a common cyclic factor of length $\ell$ at positions respectively  $i$ and $j$. Then there exists a positive constant $K$ such that
\[
L(n,\ell,i,j) \leq K  \coincidence^{\ell}.
\]
\end{proposition}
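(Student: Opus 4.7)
The plan is to reduce the two-word estimate $L(n,\ell,i,j)$ to a one-word $\ell^2$-sum via Cauchy--Schwarz, and then control that sum through the spectral properties of $\MM_{[2]}$ via the local automaton. Concretely, let $p_i(u)$ denote the probability that a single random word of length $n$ admits $u$ as a cyclic factor at position $i$. Since the two words are drawn independently,
\[
L(n,\ell,i,j) \enspace=\enspace \sum_{u\in\Red_\ell} p_i(u)\,p_j(u) \enspace\le\enspace \sqrt{S(n,\ell,i)\,S(n,\ell,j)},
\]
where $S(n,\ell,i) := \sum_{u\in\Red_\ell} p_i(u)^2$. It thus suffices to establish the uniform bound $S(n,\ell,i) \le K'\,\coincidence^\ell$.

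To bound $S(n,\ell,i)$ I would invoke Lemma~\ref{fact: probability of a factor}. In the non-straddling case $i \le n-\ell+1$, $p_i(u)\le \sum_q \gamma(q,u)$; squaring and interchanging sums yields
\[
S(n,\ell,i) \enspace\le\enspace \sum_{q,q'\in Q}\ \sum_{v\in\Red_\ell}\gamma(q,v)\,\gamma(q',v).
\]
In the straddling case $i > n-\ell+1$, writing $u = u_1u_2$ with $|u_1| = n-i+1$ and using $p_i(u) \le \sum_{q,q'}\gamma(q,u_1)\,\gamma(q',u_2)$, squaring and separating the sums over $u_1$ and $u_2$ produces a product of two expressions of exactly the same form $\sum_{v\in\Red_m}\gamma(p,v)\,\gamma(p',v)$, one for $m = |u_1|$ and one for $m = |u_2|$. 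The factorization is clean precisely because $|u_1|+|u_2| = \ell$, so the target exponent $\coincidence^\ell$ will assemble from $\coincidence^{|u_1|}\cdot\coincidence^{|u_2|}$.

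For each such expression I would apply Cauchy--Schwarz a second time,
\[
\sum_{v\in\Red_m}\gamma(p,v)\,\gamma(p',v) \enspace\le\enspace \|\vec\gamma_p(m)\|_2\,\|\vec\gamma_{p'}(m)\|_2,
\]
and the decisive step is then to bound $\|\vec\gamma_p(m)\|_2^2 = \sum_{v\in\Red_m}\gamma(p,v)^2$ by $c_p\,\coincidence^m$. For this I would pass to the local automaton $\calA'$, whose word probabilities agree with those of $\calA$ and which remains irreducible (Proposition~\ref{pro: local}). By Lemma~\ref{fact: local automaton} the quantity $\sum_v\gamma(p,v)^2$ is a row-sum of $\MM_{[2]}^m$; since entrywise squaring preserves the zero/non-zero pattern, $\MM_{[2]}$ is irreducible with spectral radius $\coincidence$, so Lemma~\ref{lm:eigenvalue} delivers $\MM_{[2]}^m\,\vec 1 \le \coincidence^m\,\vec v_{\max}$. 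Collecting the constants over the finitely many states yields the uniform bound $S(n,\ell,i)\le K'\coincidence^\ell$, hence $L(n,\ell,i,j)\le K\coincidence^\ell$.

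The principal obstacle is the straddling case: a bound keeping $u$ intact would not let the exponent split as $\coincidence^{|u_1|}\coincidence^{|u_2|}$ and recombine to $\coincidence^\ell$. Doing Cauchy--Schwarz twice — once between the two words and once inside each factor — is exactly what decouples $u_1$ from $u_2$ and allows the spectral estimate on $\MM_{[2]}$ to act on each piece independently. The only other point requiring care is checking that $\MM_{[2]}$ inherits irreducibility from $\MM_{\calA'}$, which is immediate since squaring a nonnegative matrix entrywise preserves its support.
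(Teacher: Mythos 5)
Your proof is correct, and it takes a genuinely cleaner route than the paper's. The paper bounds $L(n,\ell,i,j)$ directly by splitting into three cases according to whether $i$, $j$, or both are straddling positions; the both-straddling case requires decomposing $u$ into three pieces $u_1u_2u_3$ coordinated across the two words, which leads to a triple Cauchy--Schwarz argument and some bookkeeping. You instead apply Cauchy--Schwarz once at the outset, between the two independent words, to reduce everything to the single-word quantity $S(n,\ell,i)=\sum_u p_i(u)^2$. This decouples the two positions: you then only need to handle two cases (non-straddling vs. straddling) for a single word, and the straddling case splits into exactly two pieces whose exponents recombine as $\coincidence^{|u_1|}\coincidence^{|u_2|}=\coincidence^\ell$, as you note. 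The ingredients are identical — Lemma~\ref{fact: probability of a factor} for the factor probabilities, the local automaton of Proposition~\ref{pro: local} so that $\sum_u\gamma(p,u)^2$ becomes a row sum of $\MM_{[2]}^\ell$, and Lemma~\ref{lm:eigenvalue} applied to the irreducible matrix $\MM_{[2]}$ — but your factorization eliminates the need to align the straddling decompositions of the two words, which is the source of the paper's most intricate case. Two small points to make fully precise if you write this up: the exact equality $L(n,\ell,i,j)=\sum_u p_i(u)p_j(u)$ holds because, for a fixed position, the events ``$u$ is the cyclic factor of length $\ell$ at position $i$'' are mutually exclusive over $u\in\Red_\ell$ and the two words are drawn independently; and when passing to the local automaton you should work with the states $(p,a)$ of $\calA'$ rather than the states of $\calA$ (the paper elides this in the same way), noting that $\gamma'\bigl((p,a),-\bigr)=\gamma(p,-)$ so the $\ell_2$-norms coincide.
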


\begin{proof} 
Without loss of generality (see Proposition~\ref{pro: local}), we may assume that $\A$ is local.
The proof is based on a case study.

\noindent {\it Case 1: $i, j \le n-\ell+1$.} Using Lemma~\ref{fact: probability of a factor}, we have
\[
L(n,\ell,i,j) \le \sum_{p,q\in Q} \sum_{u\in\Red_{\ell}} \gamma(p,u)  \gamma(q,u).
\]
By a repeated application of the Cauchy-Schwarz inequality, we get
\begin{equation}\label{eq: upper L}
L(n,\ell,i,j) \enspace\leq\enspace \sum_{p,q\in Q}  \|\vec\gamma_{p}(\ell)\|_{2} \|\vec\gamma_{q}(\ell)\|_{2} \enspace\le\enspace \sum_{q\in Q}\|\vec\gamma_q(\ell)\|_2^2.
\end{equation}
Now, in view of Lemma~\ref{fact: local automaton} and since $\A$ is local, we have
\begin{equation}\label{eq: sum of coefficients}
\sum_{q\in Q}\|\vec\gamma_q(\ell)\|_2^2 =  \sum_{q\in Q}\sum_{\substack{u\in\Red_{\ell}}} \gamma(q,u)^{2} = \sum_{p\in Q}\sum_{q\in Q}\sum_{\substack{u\in\Red_{\ell}\\p\cdot u = q}} \gamma(p,u)^{2} = \vec 1^{t}  \MM^{\ell}_{[2]}   \vec 1.
\end{equation}
Since  $\MM$ is irreducible, Lemma~\ref{lm:eigenvalue} shows that there exists a positive constant $K > 0$ such that, for $\ell$ large enough, we have
$$L(n,\ell,i,j) \enspace \le \enspace \sum_{q\in Q}\|\vec\gamma_q(\ell)\|_2^2 \enspace=\enspace \vec 1^{t}  \MM^{\ell}_{[2]}   \vec 1 \enspace\leq\enspace K \coincidence^{\ell},$$
which concludes the proof of the statement in that case.

\noindent {\it Case 2: $i > n-\ell+1$ and $j \le n-\ell+1$.} (The case where $i \le n-\ell+1$ and $j > n-\ell+1$ is symmetrical.) Let $k = n-i+1$ (so $1\le k < \ell$). By Lemma~\ref{fact: probability of a factor}, we have
\begin{align*}
L(n,\ell,i,j) &\le \sum_{\substack{u_1\in\Red_k \\ u_2\in \Red_{\ell-k}}}\sum_{p,p',q\in Q}  \gamma(p, u_1)\gamma(p',u_{2}) \gamma(q,u_1u_2) \\
&\le \sum_{\substack{u_1\in\Red_k \\ u_2\in \Red_{\ell-k}}}\sum_{p,p',q,q'\in Q}  \gamma(p, u_1)\gamma(p',u_{2}) \gamma(q,u_1)\gamma(q',u_2) \\
&\le \left(\sum_{u_1\in\Red_k}\sum_{p,q\in Q}  \gamma(p, u_1) \gamma(q,u_1)\right)\ \left(\sum_{u_2\in\Red_{\ell-k}}\sum_{p',q'\in Q}  \gamma(p', u_2) \gamma(q',u_2)\right).
\end{align*}
By Cauchy-Schwarz, it follows that
\begin{align*}
L(n,\ell,i,j) &\le \left(\sum_{p,q\in Q}  \|\vec\gamma_p(k)\|_2 \|\vec\gamma_{q}(k)\|_2\right)\ \left(\sum_{p',q'\in Q} \|\vec\gamma_{p'}(\ell-k)\|_2 \|\vec\gamma_{q'}(\ell-k)\|_2\right) \\
&\le \left(\sum_{q\in Q}  \|\vec\gamma_q(k)\|_2^2\right)\ \left(\sum_{q\in Q} \|\vec\gamma_q(\ell-k)\|_2^2\right) \\
&\le \left(\vec 1^t\ \MM_{[2]}^k\ \vec 1\right)\ \left(\vec 1^t\ \MM_{[2]}^{\ell-k}\ \vec 1\right)\textrm{ by Equation~\eqref{eq: sum of coefficients}}.
\end{align*}
By Lemma~\ref{lm:eigenvalue}, there exists a constant $K_1$ such that these two factors are bounded above, respectively, by $K_1 \coincidence^k$ and $K_1 \coincidence^{\ell-k}$. Therefore
$$L(n,\ell,i,j) \le K_1^2 \coincidence^\ell$$
as announced.

\noindent {\it Case 3: $i,j > n-\ell+1$.} Without loss of generality, we may assume that $i < j$, and we let $k = n-j+1$ and $k' = \ell-(n-i+1)$. Then a word $u$ of length $\ell$ occurs as a cyclic factor in two reduced words $w_1$ and $w_2$ of length $n$, at positions $i$ and $j$ respectively, if $u = u_1u_2u_3$ with $|u_1| = k$, $|u_2| = j-i$ and $|u_3| = k'$, and if $w_1 \in u_3\tilde A^{n-\ell}u_1u_2$ and $w_2 \in u_2u_3\tilde A^{n-\ell}u_1$. Then we have
\begin{align*}
L(n,\ell,i,j) &\le \sum_{\substack{u_1\in\Red_k\\u_2\in\Red_{j-i}\\u_3\in\Red_{k'}}} \sum_{\substack{p,p'\in Q \\q,q''\in Q}} \gamma(q,u_1u_2)\gamma(q'',u_3)\ \gamma(p,u_1)\gamma(p',u_2u_3)\\
&\le \sum_{\substack{u_1\in\Red_k\\u_2\in\Red_{j-i}\\u_3\in\Red_{k'}}} \sum_{\substack{p,p',p''\in Q \\q,q',q''\in Q}} \gamma(q,u_1)\gamma(q',u_2)\gamma(q'',u_3)\ \gamma(p,u_1)\gamma(p',u_2)\gamma(p'',u_3) \\
&\le \sum_{\substack{u_1\in\Red_k\\p',q\in Q}}\gamma(q,u_1)\gamma(p',u_1) \sum_{\substack{u_2\in\Red_{j-i}\\p,q''\in Q}}\gamma(q'',u_2)\gamma(p,u_2) \sum_{\substack{u_3\in\Red_{k'}\\p'',q'\in Q}}\gamma(q',u_3)\gamma(p'',u_3).
\end{align*}
By the Cauchy-Schwarz inequality, $L(n,\ell,i,j)$ is at most equal to
$$\sum_{p,q\in Q}\|\vec\gamma_p(k)\|_2 \|\vec\gamma_q(k)\|_2\ \sum_{p,q\in Q}\|\vec\gamma_p(j-i)\|_2 \|\vec\gamma_q(j-i)\|_2\ \sum_{p,q\in Q}\|\vec\gamma_p(k')\|_2 \|\vec\gamma_q(k')\|_2$$
and hence to
$$\sum_{q\in Q}\|\vec\gamma_q(k)\|_2^2\ \sum_{q\in Q}\|\vec\gamma_q(j-i)\|_2^2\ \sum_{q\in Q}\|\vec\gamma_q(k')\|_2^2.$$
Lemma~\ref{lm:eigenvalue} shows that these three factors are bounded above, respectively, by $K_1 \coincidence^k$, $K_1 \coincidence^{j-i}$ and $K_1 \coincidence^{k'}$ for some constant $K_1$. Therefore
$$L(n,\ell,i,j) \le K_1^3 \coincidence^{k+j-i+k'} = K_1^3 \coincidence^\ell,$$
as announced.
\end{proof}

\begin{proposition}\label{lm: Markovian factor and inverse}
Let $\A$ be an irreducible Markovian automaton with coincidence probability $\coincidence$.
Denote by $L^{(2)}(n,\ell,i,j)$ the probability for two reduced words of length $n$ to have an occurrence of a factor of length $\ell$ in the first word at position $i$, and an occurrence of its inverse in the second word, at position $j$, with $\ell\leq n$ and $i,j\leq n-\ell+1$. Then there exists a positive
constant $K$ such that
\[
L^{(2)}(n,\ell,i,j) \leq K  \coincidence^{\ell}.
\]
\end{proposition}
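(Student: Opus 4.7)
The plan is to mimic Case 1 of the proof of Proposition~\ref{lm: Markovian factor}, with the added twist that the two occurrences are of a word $u$ and its inverse $u\inv$ in two independent reduced words. The restriction $i,j\le n-\ell+1$ means neither occurrence is straddling, so only one case needs to be handled. As in the previous proof, we may assume that $\calA$ is local (Proposition~\ref{pro: local}).

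First, by Lemma~\ref{fact: probability of a factor}, the probability that a given reduced word $u \in \Red_\ell$ occurs as a factor at position $i$ of a random word of length $n$ is bounded above by $\sum_{p\in Q}\gamma(p,u)$, and similarly, the probability that $u\inv$ occurs at position $j$ of an independent random word of length $n$ is bounded by $\sum_{q\in Q}\gamma(q,u\inv)$. Since the two words are drawn independently, summing over $u\in \Red_\ell$ gives
\[
L^{(2)}(n,\ell,i,j) \enspace\le\enspace \sum_{u\in\Red_\ell}\Bigl(\sum_{p\in Q}\gamma(p,u)\Bigr)\Bigl(\sum_{q\in Q}\gamma(q,u\inv)\Bigr).
\]

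Next, I apply the Cauchy-Schwarz inequality to this sum over $u$, obtaining
\[
L^{(2)}(n,\ell,i,j) \enspace\le\enspace \sqrt{\sum_{u\in\Red_\ell}\Bigl(\sum_{p\in Q}\gamma(p,u)\Bigr)^2}\ \sqrt{\sum_{u\in\Red_\ell}\Bigl(\sum_{q\in Q}\gamma(q,u\inv)\Bigr)^2}.
\]
The involution $u\mapsto u\inv$ is a bijection of $\Red_\ell$, so the second factor is equal to the first after re-indexing. Then, applying Cauchy-Schwarz once more on the inner sum (or using $(\sum_{p\in Q}x_p)^2\le |Q|\sum_{p\in Q}x_p^2$), each factor is bounded by $\sqrt{|Q|\ \sum_{p\in Q}\sum_{u\in\Red_\ell}\gamma(p,u)^2}$, which by Equation~\eqref{eq: sum of coefficients} equals $\sqrt{|Q|\ \vec 1^t \MM^\ell_{[2]} \vec 1}$.

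Finally, since $\calA$ is irreducible, so is $\MM_{[2]}$ (or at least its restriction to states reachable with non-zero probability), and Lemma~\ref{lm:eigenvalue} yields a constant $K_1>0$ such that $\vec 1^t \MM^\ell_{[2]} \vec 1 \le K_1 \coincidence^\ell$ for all $\ell$ large enough. Combining the two factors, we get $L^{(2)}(n,\ell,i,j)\le K \coincidence^\ell$ with $K = |Q| K_1$, which completes the proof. The only delicate point in the argument is the double application of Cauchy-Schwarz needed to reduce the mixed sum involving $u$ and $u\inv$ to the quantity $\vec 1^t\MM^\ell_{[2]}\vec 1$ that is controlled by the spectral radius $\coincidence$; everything else is bookkeeping.
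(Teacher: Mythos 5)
Your proof is correct for the statement as literally written, and for that case it follows essentially the same route as the paper: Lemma~\ref{fact: probability of a factor} gives the bound by $\sum_{p,q}\sum_u\gamma(p,u)\gamma(q,u\inv)$, the involution $u\mapsto u\inv$ on $\Red_\ell$ lets one replace $\gamma(\cdot,u\inv)$ with $\gamma(\cdot,u)$ inside a squared sum, Cauchy--Schwarz reduces everything to $\vec 1^t\MM_{[2]}^\ell\vec 1$, and Lemma~\ref{lm:eigenvalue} finishes. The paper applies Cauchy--Schwarz to the inner sum over $u$ for fixed $p,q$ and then sums over $p,q$, whereas you apply it first to the outer sum over $u$ and then on the inner sum over $Q$; both orders are valid and lead to the same spectral quantity up to a harmless factor of $|Q|$ that disappears into the constant $K$.

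There is, however, a mismatch you should be aware of. The restriction $i,j\le n-\ell+1$ in the statement is almost certainly a typo for $i,j\le n$ (with ``factor'' meaning ``cyclic factor''), since the result is invoked in Corollary~\ref{cor: two cyclically reduced words multiple occurrences} for cyclic factors at arbitrary positions $i,j\le n$, and the paper's own proof of this proposition explicitly works through the second and third cases of Proposition~\ref{lm: Markovian factor}, namely one straddling occurrence ($i>n-\ell+1$, $j\le n-\ell+1$) and two straddling occurrences ($i,j>n-\ell+1$), with a further subcase split depending on whether $|u_3|<|u_2|$. So while your argument handles exactly what the displayed statement asks for, the proposition as actually used requires the straddling cases too, and you would need to adapt Cases 2 and 3 of Proposition~\ref{lm: Markovian factor} --- factoring $u$ around the wrap point and applying Cauchy--Schwarz block by block --- to get the full result.
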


\begin{proof} 
The proof follows the same steps as that of Proposition~\ref{lm: Markovian factor}. In the first case ($i,j\le n-\ell+1$), Lemma~\ref{fact: probability of a factor} shows that
\[
L^{(2)}(n,\ell,i,j) \le \sum_{p,q\in Q}  \sum_{u\in\Red_{\ell}}\gamma(p,u)  \gamma(q,u\inv).
\]
Since the set of reduced words of length $\ell$  and the set of their inverses are equal, we get, by the Cauchy-Schwarz inequality,
\[
L^{(2)}(n,\ell,i,j) \leq \sum_{p,q\in Q}  \|\vec\gamma_{p}(\ell)\|_{2} \|\vec\gamma_{q}(\ell)\|_{2},
\]
and the proof proceeds as in the corresponding case of Lemma~\ref{lm: Markovian factor}.

In the second case ($i > n-\ell+1$ and $j \le n-\ell+1$), if $k = n-i+1$, then we have
\begin{align*}
L^{(2)}(n,\ell,i,j) &\le \sum_{\substack{u_1\in\Red_k \\ u_2\in \Red_{\ell-k}}}\sum_{p,p',q\in Q}  \gamma(p, u_1)\gamma(p',u_{2}) \gamma(q,u_2\inv u_1\inv) \\
&\le \sum_{\substack{u_1\in\Red_k \\ u_2\in \Red_{\ell-k}}}\sum_{p,p',q,q'\in Q}  \gamma(p, u_1)\gamma(p',u_{2}) \gamma(q,u_2\inv)\gamma(q',u_1\inv) \\
&\le \Big(\sum_{u_1\in\Red_k}\sum_{p,q'\in Q}  \gamma(p, u_1) \gamma(q',u_1\inv)\Big)\Big(\sum_{u_2\in\Red_{\ell-k}}\sum_{p',q\in Q}  \gamma(p', u_2) \gamma(q,u_2\inv)\Big)
\end{align*}
and as in the previous case, the proof proceeds as in Lemma~\ref{lm: Markovian factor}.

The situation is a little more complex in the last case ($i,j > n-\ell+1$). Without loss of generality, we may assume that $i < j$. With the same notation as in the proof of Lemma~\ref{lm: Markovian factor}, we distinguish two cases. If $|u_3| < |u_2|$ (that is, $\ell-k < k'$, or $\ell+i+j < 2n+2$), we let $u_2 = u'_2u''_2$ with $|u'_2| = |u_3|$. Then $w_1 \in u_3\tilde A^{n-\ell}u_1u'2u''_2$ and $w_2 \in {u'_2}\inv u_1\inv\tilde A^{n-\ell}u_3\inv{u''_2}\inv$ and, as in the previous proof, we find that $L^{(2)}(n,\ell,i,j)$ is at most equal to the sum of the
$$\gamma(p,u_1)\gamma(q,u_1\inv) \gamma(p',u'_2)\gamma(q',{u'_2}\inv) \gamma(p'',u''_2)\gamma(q'',{u''_2}\inv) \gamma(p''',u_3)\gamma(q''',u_3\inv)$$
with $u_1\in\Red_{j-i}, u'_2\in\Red_{\ell-k}, u''_2\in\Red_{k'-(\ell-k)}, u_3\in\Red_{\ell-k}$, and $p,p',p'',p''',q,q',q'',q'''$ are states in $Q$. The proof then proceeds as before, with multiple applications of the Cauchy-Schwarz inequality.

The case where $|u_3| \ge |u_2|$ (that is, $\ell+i+j \ge 2n+2$) is handled in the same fashion.
\end{proof}

\begin{corollary}\label{cor: two cyclically reduced words multiple occurrences}
Let $\A$ be a non-degenerated ergodic Markovian automaton with coincidence probability $\coincidence$. Let $n, \ell, i, j$ be positive integers such that $\ell\le n$ and $i,j\le n$. There exists a constant $K > 0$ such that the probability $\gothicp$ that two cyclically reduced words of length $n$ have occurrences of the same word of length $\ell$ (resp. of a word of length $\ell$ and its inverse) as cyclic factors at positions respectively $i$ and $j$, satisfies $\gothicp \le K\coincidence^\ell$.
\end{corollary}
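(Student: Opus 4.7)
The plan is to derive this corollary directly from the reduced-word analogues already established in Propositions~\ref{lm: Markovian factor} and~\ref{lm: Markovian factor and inverse}, via a simple conditioning argument on the event that both words are cyclically reduced. Since the two words are drawn independently, the natural strategy is to bound the relevant joint probability on $\Red_n\times\Red_n$ under the product measure $\PP_n=\RR_n\otimes\RR_n$, and then divide by $\RR_n(\calC_n)^2$.

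First I would invoke Proposition~\ref{Markovian, liminf C}: the non-degeneracy hypothesis together with ergodicity guarantees $p:=\liminf_n\RR_n(\calC_n)>0$. Fixing any $\delta>1$, for every $n$ large enough $\RR_n(\calC_n)\ge p/\delta$, so $\RR_n(\calC_n)^2\ge p^2/\delta^2$.

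Next, let $X\subseteq\Red_n\times\Red_n$ denote the event in question: a common word of length $\ell$ (resp.\ a word and its inverse) occurs as a cyclic factor at position $i$ in the first word and at position $j$ in the second. By Propositions~\ref{lm: Markovian factor} and~\ref{lm: Markovian factor and inverse} one has $\PP_n(X)\le K_0\,\coincidence^{\ell}$ for some constant $K_0$. (Proposition~\ref{lm: Markovian factor and inverse} is stated only for $i,j\le n-\ell+1$, but the straddling cases are handled by the same Cauchy--Schwarz splitting into $\ell_2$-norms of the vectors $\vec\gamma_q(\cdot)$ as in Cases~2 and~3 of the proof of Proposition~\ref{lm: Markovian factor}, using that $\Red_\ell\inv=\Red_\ell$ to keep the norms in play.) Combining these two ingredients,
$$
\gothicp \enspace=\enspace \frac{\PP_n\bigl(X\cap(\calC_n\times\calC_n)\bigr)}{\RR_n(\calC_n)^{2}} \enspace\le\enspace \frac{\PP_n(X)}{\RR_n(\calC_n)^{2}} \enspace\le\enspace \frac{\delta^2}{p^2}\,K_0\,\coincidence^{\ell},
$$
so we may take $K=\delta^{2}K_0/p^{2}$, which concludes the argument.

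I do not expect a genuine obstacle here: all the heavy probabilistic work has been done in the previous propositions, and what remains is essentially to transfer Lemma~\ref{lemma: conditional probability} to the product space, which costs only the factor $(\delta/p)^{2}$. The one bookkeeping point that deserves attention is the straddling extension of Proposition~\ref{lm: Markovian factor and inverse}; this requires no new idea, only the observation that a straddling occurrence factors as a pair of standard occurrences whose joint probability is again controlled by products of $\ell_2$-norms of the form $\|\vec\gamma_q(k)\|_2$, each bounded by Lemma~\ref{lm:eigenvalue} applied to $\MM_{[2]}$.
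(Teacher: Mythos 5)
Your proof is correct and follows essentially the same route as the paper: bound the reduced-word probability via Propositions~\ref{lm: Markovian factor} and~\ref{lm: Markovian factor and inverse}, invoke Proposition~\ref{Markovian, liminf C} to get $\liminf_n\RR_n(\calC_n)=p>0$, and condition on both words being cyclically reduced, which is exactly the two-variable version of Lemma~\ref{lemma: conditional probability} that the paper applies (at the cost of $(\delta/p)^2$ rather than $\delta/p$). Your remark about the straddling cases is apt: the hypotheses of Proposition~\ref{lm: Markovian factor and inverse} as stated restrict to $i,j\le n-\ell+1$, yet its proof (Cases~2 and~3) already covers the straddling positions, which is precisely what the corollary needs.
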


\begin{proof}
The hypothesis on $\A$ guarantees that $\liminf\RR_n(\calC) = p > 0$ by Proposition~\ref{Markovian, liminf C}. Our statement then follows from Propositions~\ref{lm: Markovian factor} and~\ref{lm: Markovian factor and inverse}, in view of Lemma~\ref{lemma: conditional probability}.
\end{proof}

We now consider the case of multiple occurrences of a length $\ell$ cyclic factor (or of such a word and its inverse) within a single reduced word.

\begin{proposition}\label{lm: two occurrences}
Let $\A$ be a non-degenerate ergodic Markovian automaton with coincidence probability $\coincidence$. There exists a constant $K > 0$ such that the probability that a cyclically reduced word of length $n$ has two occurrences of a length $\ell$ word as cyclic factors, or occurrences of a length $\ell$ word and its inverse as cyclic vactors, is at most $K  \ell^2   n^{2} \coincidence^{\ell/2}$.
\end{proposition}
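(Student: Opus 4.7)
The plan is to reduce the statement to Corollary~\ref{prop: repeated cyclic factors in cyclically reduced words}, which, for any prefix-heavy sequence of measures $(\RR_n)_n$ satisfying $\liminf_n \RR_n(\calC_n) > 0$, gives exactly the three collision-probability bounds we need. Both prerequisites hold automatically for a non-degenerate ergodic Markovian automaton: Proposition~\ref{prop: parameter if irreducible} yields prefix-heaviness with parameters $(C,\coincidence^{1/2})$ for some constant $C>0$, since ergodicity entails aperiodicity and hence rules out $\calA$ being a non-trivial simple cycle; and Proposition~\ref{Markovian, liminf C} gives $\liminf_n \RR_n(\calC_n) = p > 0$ from the non-degeneracy hypothesis $\sum_{a\in\tilde A}\gamma_0(a)\tilde\gamma(a\inv)\neq 1$.

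The core step is then to substitute $\alpha = \coincidence^{1/2}$ and $t = \ell$ into Corollary~\ref{prop: repeated cyclic factors in cyclically reduced words} and to fix any $\delta > 1$. This directly produces three bounds, valid for all sufficiently large $n$: at most $\tfrac{\delta}{p}(Cn^2 + C^2 n\ell)\coincidence^{\ell/2}$ for two non-overlapping occurrences of the same cyclic factor of length $\ell$, the same bound for occurrences of a length-$\ell$ word and its inverse (which can only be non-overlapping, since no reduced word overlaps with its own inverse), and $\tfrac{3\delta}{p}C^2 n\ell\,\coincidence^{\ell/2}$ for two overlapping cyclic occurrences. Summing these three contributions and using $\ell \le n$ yields an overall bound of order $n^2\coincidence^{\ell/2}$, which is in particular at most $K\ell^2 n^2\,\coincidence^{\ell/2}$ for a suitable constant $K$; the $\ell^2$ factor is strictly slack and serves only to give a uniform statement that plays well with later applications.

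The one subtlety worth flagging is the degenerate case where $\calA$ consists of a single state with a self-loop: Proposition~\ref{prop: parameter if irreducible} does not directly apply there because $\calA$ is then a simple cycle, but in that case $\coincidence = 1$ and the claimed bound $K\ell^2 n^2\coincidence^{\ell/2} = K\ell^2 n^2$ is vacuous once $K$ is chosen large enough. Outside this corner case, the argument is a routine assembly of the estimates from Section~\ref{sec: repeated cyclic factors} with the Markovian parameter substitution $\alpha = \coincidence^{1/2}$; no genuinely new combinatorics is required, and the main ``obstacle'' is purely bookkeeping, namely absorbing $\delta/p$, $C$, and the finite constants arising from small $n$ into a single $K$.
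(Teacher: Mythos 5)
Your proof is correct and follows exactly the same route as the paper: invoke Proposition~\ref{prop: parameter if irreducible} to get prefix-heaviness with parameters $(C,\coincidence^{1/2})$, then apply Corollary~\ref{prop: repeated cyclic factors in cyclically reduced words} with $t=\ell$, $\alpha = \coincidence^{1/2}$, and absorb all constants into $K$. You are slightly more careful than the published proof: you flag (and correctly dispose of) the single-state self-loop edge case, in which $\calA$ \emph{is} a single cycle so Proposition~\ref{prop: parameter if irreducible} does not apply verbatim but the bound is vacuous since $\coincidence=1$, and you note that the $\ell^2$ factor in the statement is slack rather than needed.
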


\begin{proof}
By Proposition~\ref{prop: parameter if irreducible}, the sequence $(\R_n)_n$ induced by $\A$ is prefix-heavy with parameters $(C,\coincidence^{1/2})$ for some $C$. The result then follows from Corollary~\ref{prop: repeated cyclic factors in cyclically reduced words}.
\end{proof}

\noindent We can now proceed with the {\bf proof of Proposition~\ref{pro: Markovian small cancel}.}
Let $N =  \coincidence^{-dn}$. An $N$-tuple of cyclically reduced words which fails to satisfy $C'(\lambda)$, must satisfy one of the following conditions: either two words in the tuple have occurrences of the same cyclic factor of length $\ell = \lambda n$ or occurrences of such a word and its inverse; or a word in the tuple has two occurrences of the same cyclic factor of length $\ell$ or occurrences of such a word and its inverse.

By Corollary~\ref{cor: two cyclically reduced words multiple occurrences}, the first event occurs with probability at most
$$K\binom{N}{2}n^2\coincidence^{\ell} \le K n^2 \coincidence^{(\lambda-2d)n}$$
for some $K > 0$. By Proposition~\ref{lm: two occurrences}, the second event occurs with probability at most
$$KN\ell^2n^2\coincidence^{\frac\ell2} \le K n^4 \coincidence^{(\frac\lambda2 - d)n},$$
for some $K >0$. Thus both events occur with probabilities that vanish exponentially fast, and this concludes the proof of Proposition~\ref{pro: Markovian small cancel}.

\subsection{Long common prefixes at high density}\label{sec: long prefixes}

In this section, we establish the following propositions corresponding
respectively to the second and third statement of Theorem~\ref{thm:
  phase}.

\begin{proposition}\label{prop: no small cancellation}
Let $\A$ be a non-degenerate ergodic Markovian automaton with coincidence probability $\coincidence$. Let $\lambda\in(0,\frac12)$ and let $d\in(\frac{\lambda}{2},1)$. A tuple of cyclically
reduced words of length $n$ taken independently and randomly according to $\A$, at density $d$, generically does not satisfy the small cancellation property $C'(\lambda)$.
\end{proposition}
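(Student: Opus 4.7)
Let $N = \coincidence^{-dn}$ denote the tuple size and set $\ell = \lceil \lambda n \rceil$. The plan is to show that, exponentially generically, two distinct words in the tuple $\vec h = (w_1, \ldots, w_N)$ share a common prefix of length $\ell$; any such shared prefix is then a piece of length $\ell \ge \lambda n$, which witnesses the failure of $C'(\lambda)$. For each pair $i < j$, let $X_{ij}$ be the indicator of this event, and set $X = \sum_{i<j} X_{ij}$. It suffices to show that $\P_n(X = 0) \to 0$ exponentially fast, which I will do via a second moment calculation.

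Let $\tilde p(u)$ denote the probability that a random cyclically reduced word of length $n$ has $u \in \Red_\ell$ as a prefix. A first technical step is the two-sided uniform estimate $c_1\gamma_0(u) \le \tilde p(u) \le C_1\gamma_0(u)$. The upper bound follows from Lemma~\ref{lemma: conditional probability} and Proposition~\ref{Markovian, liminf C} (using $\RR_n(\Pref(u)) = \gamma_0(u)$). For the lower bound, one computes $\RR_n(\Pref(u)\setminus\calC_n)$, which is the probability that a word starting with $u$ has last letter $u_1\inv$; Theorem~\ref{thm ergodique} shows that this tends to $\gamma_0(u)\,\tilde\pi(u_1\inv)$, where $\tilde\pi(a) = \sum_q \tilde\gamma(q)\gamma(q,a)$ is the stationary edge distribution, and the non-degeneracy hypothesis guarantees $\tilde\pi(a) < 1$ uniformly in $a$. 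Next I would establish $q_\ell := \sum_{u\in\Red_\ell}\gamma_0(u)^2 \asymp \coincidence^\ell$: expanding $\gamma_0(u)^2 = \bigl(\sum_p \gamma_0(p)\gamma(p,u)\bigr)^2$, using $(\sum a_i)^2 \ge \sum a_i^2$ for the lower bound and Cauchy--Schwarz for the upper bound, the problem reduces to estimating $\vec 1^{t}\MM_{[2]}^\ell \vec 1$, which is of order $\coincidence^\ell$ by Lemma~\ref{lm:eigenvalue} (applied to the irreducible matrix $\MM_{[2]}$) together with Lemma~\ref{fact: local automaton}. Combining these yields $\E[X_{ij}] = \sum_u \tilde p(u)^2 \asymp \coincidence^\ell$, hence
\[
\E[X] \asymp N^2\coincidence^\ell = \coincidence^{(\lambda - 2d)n},
\]
which blows up exponentially since $d > \lambda/2$.

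For the variance, the non-trivial contributions come from pairs of indices sharing exactly one element, giving
\[
\mathrm{Var}(X) \le \binom{N}{2}\E[X_{12}] + O(N^3)\sum_{u\in\Red_\ell}\tilde p(u)^3.
\]
Bounding $\sum_u \tilde p(u)^3 \le \bigl(\max_u \tilde p(u)\bigr)\cdot q_\ell$, and using $\max_u \gamma_0(u) \le C\,\coincidence^{\ell/2}$, which comes from the prefix-heaviness with parameters $(C, \coincidence^{1/2})$ given by Proposition~\ref{prop: parameter if irreducible}, I obtain
\[
\frac{\mathrm{Var}(X)}{\E[X]^2} = O\!\bigl(\coincidence^{(2d-\lambda)n} + \coincidence^{(d-\lambda/2)n}\bigr),
\]
both terms vanishing exponentially since $d > \lambda/2$. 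Chebyshev's inequality then yields $\P_n(X = 0) \to 0$ exponentially.

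The main obstacle will be the uniform lower bound $\tilde p(u) \ge c_1\gamma_0(u)$: one must control the conditional probability of cyclic reducibility given a prescribed long prefix $u$, uniformly over all such $u$. This is precisely where the non-degeneracy assumption enters: a pathological ergodic automaton with $\tilde\pi$ concentrated on a single letter $a$ would force $\tilde p(u)$ to be very small whenever $u$ starts with $a\inv$, destroying uniformity. Once the uniform estimate is in hand, the remainder is a standard Paley--Zygmund/Chebyshev counting argument, with all the powers of $\coincidence$ produced cleanly by the Perron--Frobenius spectral estimates on $\MM_{[2]}$ and the prefix-heaviness bound on $\max_u \gamma_0(u)$.
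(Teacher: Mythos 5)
Your proof is correct but follows a genuinely different route from the paper. The paper reduces Proposition~\ref{prop: no small cancellation} in one line to Proposition~\ref{pro: collisions}, whose proof invokes Szpankowski's Lemma~3 (on right-infinite reduced words, then truncated), together with Lemma~\ref{lm: prefixes} and Lemma~\ref{lm: three prefixes}. You instead run a self-contained second-moment/Chebyshev argument directly on $X = \sum_{i<j}X_{ij}$, which avoids both the excursion into $\tilde A^{\mathbb N}$ and the reference to Szpankowski's framework. Two further differences are worth noting. First, the paper's lower bound for cyclically reduced prefix collisions (Lemma~\ref{lm: prefixes}) is obtained by explicitly grafting a length-$t$ suffix that forces cyclic reducibility; you instead establish the uniform pointwise comparison $\tilde p(u)\asymp\gamma_0(u)$, which is more information than the paper extracts but requires the ergodic convergence argument you sketch. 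Second, the paper controls the three-way collision term via $\alpha_{[3]}$ and the Karlin--Ost inequality $\alpha_{[3]}^{1/3}\le\coincidence^{1/2}$; you replace this by the more elementary $\sum_u\tilde p(u)^3\le(\max_u\tilde p(u))\sum_u\tilde p(u)^2$ combined with the prefix-heaviness bound $\gamma_0(u)\le C\coincidence^{\ell/2}$ from Proposition~\ref{prop: parameter if irreducible}; both give the same $O(\coincidence^{3\ell/2})$ order. As a bonus your computation yields exponential genericity, whereas Proposition~\ref{pro: collisions} is stated only as ``generically''.

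One small misattribution: the uniform bound $\tilde\pi(a)<1$ for every $a\in\tilde A$ is not really a consequence of non-degeneracy. It follows from ergodicity together with $\A$ not being a single cycle: since the stationary vector of an ergodic chain has full support, $\tilde\pi(a)=1$ would force every state of the local automaton to carry label $a$, i.e., $\A$ is a single cycle on the letter $a$. Non-degeneracy (the condition $\sum_{a}\gamma_0(a)\tilde\gamma(a\inv)\ne 1$) enters for a different purpose, namely via Proposition~\ref{Markovian, liminf C} to ensure $\liminf_n\RR_n(\calC_n)>0$, which you also need (in the denominator of $\tilde p(u)$) but pass over silently. This doesn't affect correctness, but the role of the hypothesis should be stated accurately.
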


\begin{proposition}\label{prop: degenerate subgroups}
Let $\calA$ be a non-degenerate ergodic Markovian automaton with coincidence probability $\coincidence$. Let $E$ be the set of letters of $\tilde A$ which label a transition in $\calA$ and let $D = A \setminus (E \cup E\inv)$. Let $d > \frac12$ and $N \ge \coincidence^{-dn}$, and let $G$ be a group presented by an $N$-tuple of cyclically reduced words chosen independently at random according to $\calA$.

If $E \cap E\inv = \emptyset$, then $G = F(|D|+1)$ exponentially generically.

If $E \cap E\inv \ne \emptyset$, then exponentially generically $G = F(D) \ast \ZZ/2\ZZ$ (if $n$ is even) or $G = F(D)$ (if $n$ is odd).
\end{proposition}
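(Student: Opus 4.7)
The plan is to adapt Ollivier's argument for Theorem~\ref{thm: phase transitions}~(3) to the Markovian setting, carefully tracking which letters can appear in the relators. As a first step, I would observe that since every transition in $\calA$ is labeled by a letter of $E$, every word in the support of $\calA$, and in particular every relator $h_i$, uses only letters from $E$. Consequently letters in $D$ never appear in any $h_i$, and $G$ splits as a free product $G = F(D) * G'$, where $G' = \langle A \setminus D \mid \vec h\rangle$. The problem thus reduces to showing that, generically, $G' = F(1)$ when $E \cap E^{-1} = \emptyset$, and $G' = \ZZ/2\ZZ$ (for even $n$) or $G' = \{1\}$ (for odd $n$) when $E \cap E^{-1} \ne \emptyset$.

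The next step relies on a birthday-type collision argument. Combining Proposition~\ref{prop: parameter if irreducible} with a Cauchy--Schwarz argument analogous to the proof of Proposition~\ref{lm: Markovian factor}, one can show that for $\ell \le n$, the probability that two independent cyclically reduced words of length $n$ drawn according to $\calA$ share a common prefix of length $\ell$ ending at a prescribed state $q$ is at most $K \coincidence^{\ell}$ for some constant $K$ (passing from reduced to cyclically reduced via Proposition~\ref{Markovian, liminf C} and Lemma~\ref{lemma: conditional probability}). With $N = \coincidence^{-dn}$ relators and $d > \frac{1}{2}$, taking $\ell = \lceil n/2 \rceil$ makes the expected number of pairs with a common length-$\ell$ prefix at least $\binom{N}{2}\, K \coincidence^\ell \approx \coincidence^{(\frac{1}{2} - 2d)n}$, which grows exponentially; a second-moment argument then yields concentration around this mean.

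The third step would be an inductive shortening procedure. Each collision $h_i = u v_i$, $h_j = u v_j$ yields a new relator $v_i v_j^{-1}$ in the normal closure of $\vec h$, of reduced length at most $2(n-\ell) \approx n$. I would iterate the birthday argument on these derived relators to produce even shorter relations, approximately halving the length at each stage, until after finitely many iterations the lengths drop to $O(1)$. The proof would then conclude by a finite case analysis on the resulting length-$1$ and length-$2$ relations: when $E \cap E^{-1} = \emptyset$ they take the form $a b^{-1}$ for various letters $a, b \in A \setminus D$, identifying all generators of $G'$ with a single infinite-order element, giving $G' = F(1)$ and hence $G = F(|D|+1)$; when $E \cap E^{-1} \ne \emptyset$, relations of the form $a^2 = 1$ arise (with $a$ such that $a, a^{-1} \in E$), and the parity of $n$ determines whether odd-length relators of the form $a = 1$ can be derived, yielding $G' = \ZZ/2\ZZ$ for even $n$ and $G' = \{1\}$ for odd $n$.

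The main obstacle will be step three: I would have to carefully control the correlations between the original and the derived relators to justify iterating the birthday argument and to make sure the coincidence bounds from Section~\ref{sec: long factors} continue to apply to the derived family. Invoking the ergodicity of $\calA$ and the exponential convergence rate from Theorem~\ref{thm ergodique} should be essential to ensure that, at intermediate prefix lengths, the distribution on states remains close to its stationary value, so that the collision estimates hold uniformly across iterations. The parity argument in the final case analysis is the other delicate point, since it crucially uses that all the original relators share the common length $n$.
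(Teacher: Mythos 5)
Your outline correctly identifies the free-factor decomposition $G = F(D) * G'$ and the general goal of showing that $G'$ degenerates, and the use of a second-moment birthday collision argument is indeed the engine the paper uses. However, your step three — the iterative shortening of relators — contains both a concrete error and an unresolved gap that the paper's actual argument is specifically designed to avoid.

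The error: with $\ell = \lceil n/2\rceil$, a derived relator $v_i v_j^{-1}$ coming from a collision $h_i = u v_i$, $h_j = u v_j$ has reduced length up to $2(n-\ell) \approx n$, so the lengths do \emph{not} halve — they remain of order $n$. For an actual decrease you would need $\ell$ well above $n/2$, and even then the shrinkage is slow and would have to be controlled uniformly over iterations.

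The gap: you would need to control the distribution of, and correlations among, the derived relators at every stage, which you flag as the main obstacle but do not resolve. The paper sidesteps iteration entirely. Proposition~\ref{prop: getting degenerate subgroups} is proved in one shot: for any two words $x, y$ of the same (small, fixed) length $s$ that label paths out of a common state, the tuple $\vec h$ exponentially generically contains a pair $(h_i, h_j) = (wx, wy)$ with a common prefix $w$ of length $n - s$, and this single pair already gives $x =_G y$. This is achieved by applying Proposition~\ref{pro: collisions} with $\ell_n = n - t - s$ (a prefix of length $n - O(1)$, not $n/2$) to exponentially many disjoint batches of size $\coincidence^{-(d-e)n}$ with $\frac12 < d - e < d$, producing exponentially many collisions; then restricting, via a chain of Chernoff bounds, to a positive fraction of collisions landing in a prescribed intermediate state, then a positive fraction whose next $t$ letters are a prescribed connecting word $v$ (here $t$ is chosen so that $\MM^t$ has all entries positive, which is where ergodicity enters), and finally a positive fraction finishing with the desired suffixes $x, y$. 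The Chinese-remainder construction in the second step of that proof produces equal-length words ending in any two prescribed transition labels $a, b$, so one obtains $a =_G b$ for all $a,b\in E$ directly, with no produced chain of short relators and no case analysis on relations of length one or two. Your final parity argument is in the right spirit but is applied to the wrong objects; in the paper it is applied to the images of the \emph{original} length-$n$ relators in the quotient where all of $E$ has been identified, which is cleaner and does not depend on the iteration reaching length $O(1)$.
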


Both proofs rely heavily on the methodology introduced by Szpankowski~\cite{1991:Szpankowski} to study the typical heigth of a random trie. We first establish simple lower and upper bounds for words to share a common prefix (Lemmas~\ref{lm: prefixes} and~\ref{lm: three prefixes}).

\begin{lemma}\label{lm: prefixes}
Let $\A$ be an irreducible Markovian automaton with coincidence probability $\coincidence$. Let $P(n,\ell)$ (\emph{resp.}  $P'(n,\ell)$) be the probability that two reduced (resp. cyclically reduced) words of length $n$ share a common prefix
of length $\ell$. There exists a constant $K > 0$ such that $P(n,\ell) \geq K  \coincidence^{\ell}$.

If $\A$ is non-degenerate and ergodic and $t$ is large enough for all the coefficients of $\MM^{t}$ to be positive, then $K$ can be chosen such that $P'(n,\ell) \geq K  \coincidence^{\ell}$ when $n \ge \ell+t+1$.
\end{lemma}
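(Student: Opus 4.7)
The plan is to reduce to the local version of $\calA$ via Proposition~\ref{pro: local}, then express both probabilities directly via the prefix weights $\gamma_0(u)$. For any reduced word $u$ with $|u|\leq n$, one has $\RR_n(\Pref(u)) = \gamma_0(u)$, since summing $\gamma_0(uv)$ over all length-$(n-|u|)$ continuations $v$ recovers $\gamma_0(u)$ by stochasticity. Thus
\[
P(n,\ell) = \sum_{u\in\Red_\ell}\gamma_0(u)^2,\qquad P'(n,\ell) = \RR_n(\calC_n)^{-2} \sum_{u\in\Red_\ell}\RR_n(\Pref(u)\cap\calC_n)^2.
\]

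For the first bound, I would pick a state $p_0$ with $\gamma_0(p_0)>0$, use $\gamma_0(u)\geq\gamma_0(p_0)\gamma(p_0,u)$, and apply Lemma~\ref{fact: local automaton} to obtain
\[
P(n,\ell)\geq \gamma_0(p_0)^2 \sum_{u\in\Red_\ell}\gamma(p_0,u)^2 = \gamma_0(p_0)^2\,(\MM_{[2]}^\ell\vec 1)(p_0).
\]
Since $\MM_{[2]}$ has the same zero/non-zero pattern as the irreducible matrix $\MM$, it is irreducible itself, and Lemma~\ref{lm:eigenvalue} gives $(\MM_{[2]}^\ell\vec 1)(p_0)\geq v_{\min}(p_0)\coincidence^\ell$, yielding the first bound with $K=\gamma_0(p_0)^2 v_{\min}(p_0)$.

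For the second bound, the plan is to show that conditioning on cyclic reducedness costs only a uniform multiplicative factor, reducing matters to the first bound. For $u\in\Red_\ell$ with first letter $a$, I would decompose $\RR_n(\Pref(u)\cap\calC_n) = \gamma_0(u)(1-p(u,n))$, where $p(u,n)$ is the conditional probability that a length-$n$ word extending $u$ ends in $a\inv$. By the Markov property in the local automaton, $p(u,n)$ is a convex combination of sums $\sum_{s\,:\,\mathrm{label}(s)=a\inv}\MM^{n-\ell}(p\cdot u,s)$. Once $\MM^t$ has all positive entries, the stochasticity of $\MM$ propagates this lower bound to every power $\MM^s$ with $s\geq t$, yielding a uniform $m_t>0$ bounding every entry of $\MM^{n-\ell}$ from below whenever $n-\ell\geq t$. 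Hence $1-p(u,n)\geq m_t\cdot|S_{\ne a\inv}|$, where $S_{\ne a\inv}$ is the set of local states whose label differs from $a\inv$.

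The main obstacle is ensuring that $S_{\ne a\inv}$ is non-empty. This follows from a purely structural observation: if every state of $\calA'$ had label $a\inv$, the automaton would only output words in $(a\inv)^*$, forcing $u$ itself to begin with $a\inv$, contradicting the fact that a letter and its formal inverse are distinct in the free group. Thus $1-p(u,n)\geq m_t$ uniformly in $u$ with $\gamma_0(u)>0$, and since $\RR_n(\calC_n)\leq 1$,
\[
P'(n,\ell)\geq \sum_u \RR_n(\Pref(u)\cap\calC_n)^2 \geq m_t^2 \sum_u\gamma_0(u)^2 = m_t^2 P(n,\ell)\geq m_t^2 K\coincidence^\ell.
\]
The non-degeneracy hypothesis enters only implicitly, via Proposition~\ref{Markovian, liminf C}, to guarantee $\RR_n(\calC_n)>0$ so that the ratio defining $P'(n,\ell)$ is well-posed.
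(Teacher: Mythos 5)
Your argument for $P(n,\ell)$ is essentially identical to the paper's: fix a state $p_0$ with $\gamma_0(p_0)>0$, restrict to words readable from $p_0$, write $\sum_{u\in\Red_\ell}\gamma(p_0,u)^2$ as a component of $\MM_{[2]}^\ell\vec 1$, and apply Lemma~\ref{lm:eigenvalue}.

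Your treatment of $P'(n,\ell)$ is correct but genuinely different from the paper's. The paper lower-bounds $\RR_n(X_p)$ directly by exhibiting an explicit family of cyclically reduced pairs: it takes a common prefix $u$ of length $\ell$ starting with some letter $a$, appends arbitrary continuations $u_1,u_2$, then uses ergodicity to append length-$t$ words $u_1',u_2'$ returning to the state $p$, and finally the letter $a$ again; since both words start and end with $a$ they are automatically cyclically reduced, and the contribution of this family is already of order $\coincidence^\ell$. Your route instead factors through $P(n,\ell)$: you write $\RR_n(\Pref(u)\cap\calC_n)=\gamma_0(u)(1-p(u,n))$ and show that $1-p(u,n)$ is uniformly bounded below by a constant $m_t$ (the minimum entry of $\MM^t$, propagated to all higher powers by stochasticity), using the easy observation that at least one local state has label different from $a\inv$ (indeed the very state reached after reading the first letter $a$ of $u$ has label $a$). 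This gives $P'(n,\ell)\geq m_t^2 P(n,\ell)$, a slightly more modular argument, and notice it never actually invokes the non-degeneracy hypothesis beyond guaranteeing $\RR_n(\calC_n)>0$ — which your uniform bound $1-p(u,n)\geq m_t$ yields on its own. The paper's version gives an explicit constant $K$ in terms of $\gamma_0(p)$, $p_{\min}$, $\bar p_{\min}$ and $\beta$; yours gives $m_t^2 K$ with $K$ coming from the first bound. Both are perfectly valid; yours is cleaner in the way it isolates the conditioning step, while the paper's is more constructive.
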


\begin{proof}
Let $p$ be a state such that $\gamma_{0}(p)>0$. To establish the announced lower bounds, we only need to consider the words that can be read from state $p$. More precisely, when considering reduced words, we have
\[
P(n,\ell) \enspace\geq\enspace \gamma_{0}(p)^{2}  \sum_{u\in\Red_{\ell}}\gamma(p,u)^{2}.
\]
We observe that $\sum_{u\in\Red_{\ell}}\gamma(p,u)^{2}$ is the $p$-component of $\MM^\ell_{[2]} \vec 1$, and by Lemma~\ref{lm:eigenvalue}, it is greater than or equal to $\beta \coincidence^\ell$, where $\beta$ is the minimal component of $\vec v_{\min}$ (in the notation of Lemma~\ref{lm:eigenvalue}). This completes the proof of the statement concerning $P(n,\ell)$.

We now consider cyclically reduced words, under the hypothesis that $\A$ is non-degenerate and ergodic. Let $t$ be such that all the coefficients of $\MM^t$ are positive, let $\bar p_{\min}$ be the least coefficient of this matrix, and let $p_{\min}$ be the least positive coefficient of $\MM$. Finally, let $\gothicp = \liminf \RR_n(\calC)$, which is positive by Proposition~\ref{Markovian, liminf C}. Let $X$ (resp. $X_p$) be the set of pairs of cyclically reduced words of length $n$ that have a common prefix of length $\ell$ (resp. which can be read from state $p$). We note that
$$P'(n,\ell) \enspace=\enspace \frac{\RR_n(X)}{\RR_n(\calC)^2} \enspace\ge\enspace \frac1{\gothicp^2}\RR_n(X) \enspace\ge\enspace \frac1{\gothicp^2}\RR_n(X_p),$$
so we only need to find a lower bound for $\RR_n(X_p)$.

Suppose that $n \ge \ell + t + 1$. Then $X_p$ contains the set of pairs of reduced words of the form $(uu_1u'_1a,uu_2u'_2a)$ which can be read from $p$, where $a$ is the first letter of $u$, and $u'_1$ and $u'_2$ are words of length $t$ such that $p\cdot (uu_1u'_1) = p\cdot (uu_2u'_2) = p$. Since these words start and end with the same letters, they are guaranteed to be cyclically reduced. Thus we have
\[
\RR_n(X_p) \enspace\ge\enspace \gamma_{0}(p)^{2} \sum_{u\in\Red_{\ell}} \gamma(p,u)^{2}\  p_{\min}^{2}\ \bar p_{\min}^{2} \enspace\ge\enspace \beta\ \gamma_0(p)^2\ p_{\min}^{2}\ \bar p_{\min}^{2}\ \coincidence^\ell,
\]
and this concludes the proof.
\end{proof}

\begin{lemma}\label{lm: three prefixes}
Let $\A$ be an irreducible Markovian automaton with coincidence probability $\coincidence$. There exists a constant $K > 0$ such that the probability that three reduced words share the same prefix of length $\ell$ is at most $K \alpha_{[3]}^{\ell}$.

If $\A$ is non-degenerate and ergodic, the same holds for triples of cyclically reduced words.
\end{lemma}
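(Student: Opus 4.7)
The plan is to mirror the strategy of Proposition~\ref{lm: Markovian factor}, replacing the Cauchy--Schwarz step on pairs (which produced $\MM_{[2]}$ and $\coincidence$) by a three-variable H\"older step that will naturally produce $\MM_{[3]}$ and $\alpha_{[3]}$. By Proposition~\ref{pro: local}, I may assume without loss of generality that $\A$ is itself local (this preserves probabilities of every reduced word and preserves irreducibility). The probability that three independent reduced words of length $n$ share a common prefix of length $\ell$ is then
\[
T(n,\ell) \enspace=\enspace \sum_{u\in\Red_\ell}\gamma_0(u)^3 \enspace=\enspace \sum_{u\in\Red_\ell}\Bigl(\sum_{p\in Q}\gamma_0(p)\,\gamma(p,u)\Bigr)^{\!3},
\]
and expanding the cube gives $T(n,\ell) = \sum_{p,q,r\in Q}\gamma_0(p)\gamma_0(q)\gamma_0(r)\sum_{u\in\Red_\ell}\gamma(p,u)\gamma(q,u)\gamma(r,u)$.

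The main step is H\"older's inequality with three exponents equal to $3$, which yields
\[
\sum_{u\in\Red_\ell}\gamma(p,u)\gamma(q,u)\gamma(r,u) \enspace\le\enspace \prod_{s\in\{p,q,r\}}\Bigl(\sum_{u\in\Red_\ell}\gamma(s,u)^3\Bigr)^{\!1/3}.
\]
Since $\A$ is local, the obvious analog of Lemma~\ref{fact: local automaton} for $\MM_{[3]}$ gives $\sum_{u\in\Red_\ell}\gamma(s,u)^3 = (\MM_{[3]}^\ell\vec 1)_s$. The matrix $\MM_{[3]}$ has exactly the same zero pattern as $\MM$ (both entries vanish precisely off the transitions of $\A$), so $\MM_{[3]}$ inherits irreducibility from $\MM$. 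Lemma~\ref{lm:eigenvalue} applied to $\MM_{[3]}$ (whose spectral radius is $\alpha_{[3]}$ by definition) then furnishes a positive vector $\vec v_{\max}$ with $\MM_{[3]}^\ell\vec 1 \le \alpha_{[3]}^\ell\vec v_{\max}$ componentwise. Injecting these bounds back gives
\[
T(n,\ell) \enspace\le\enspace \alpha_{[3]}^\ell\,\Bigl(\sum_{p\in Q}\gamma_0(p)\,v_{\max}(p)^{1/3}\Bigr)^{\!3} \enspace=\enspace K\,\alpha_{[3]}^\ell,
\]
with $K$ a constant depending only on $\A$. This settles the reduced-word statement.

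For the cyclically reduced version, I use that $\A$ being non-degenerate and ergodic ensures $\liminf_n \RR_n(\calC_n) = p > 0$ by Proposition~\ref{Markovian, liminf C}. Since the three words are drawn independently, the event that all three lie in $\calC_n$ has probability $\RR_n(\calC_n)^3$, which is bounded below by a positive constant for large $n$. Thus the conditional probability that three cyclically reduced words share a prefix of length $\ell$ is at most $T(n,\ell)/\RR_n(\calC_n)^3$, which is again of the form $K'\alpha_{[3]}^\ell$ for a suitable constant $K'$ (this is the three-coordinate analog of Lemma~\ref{lemma: conditional probability}). I do not expect any real obstacle: the computation is a routine generalization of the $\MM_{[2]}$ argument used in Proposition~\ref{lm: Markovian factor}, with H\"older replacing Cauchy--Schwarz. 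The only point that deserves explicit mention is that $\MM_{[3]}$ is irreducible, which follows at once from sharing the support of $\MM$.
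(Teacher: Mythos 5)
Your proof is correct and follows essentially the same route as the paper: pass to the local automaton, expand the third moment over starting states, bound the inner sum by a product of $\ell_3$-norms, identify $\sum_u\gamma(s,u)^3$ with $(\MM_{[3]}^\ell\vec 1)_s$, and invoke the Perron--Frobenius bound from Lemma~\ref{lm:eigenvalue}. The only cosmetic difference is that you apply the three-exponent H\"older inequality in one shot, whereas the paper applies two-exponent H\"older followed by Cauchy--Schwarz to reach the same $\bigl(\sum\gamma(p_i,u)^3\bigr)^{1/3}$ factorization; and you make explicit two points the paper leaves tacit, namely that $\MM_{[3]}$ is irreducible (same zero pattern as $\MM$) and that the cyclically reduced case divides by $\RR_n(\calC_n)^3$ rather than $\RR_n(\calC_n)$.
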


\begin{proof}
The probability $\gothicp(u)$ that three reduced words have a common prefix $u$ is
\[
\gothicp(u) = \sum_{p_{1},p_{2},p_{3}\in Q}\gamma_{0}(p_{1})\ \gamma_{0}(p_{2})\ \gamma_{0}(p_{3}) \ \gamma(p_{1},u)\ \gamma(p_{2},u) \ \gamma(p_{3},u).
\]
The probability we are interested in is obtained by summing over all $u\in\Red_{\ell}$. It is bounded above by
\[
\sum_{p_{1},p_{2},p_{3}\in Q}\ \sum_{u\in\Red_{\ell}} \gamma(p_{1},u)\ \gamma(p_{2},u) \ \gamma(p_{3},u).
\]
By the H\"older and Cauchy-Schwarz inequalities, we have
\begin{align*}
\sum_{u\in\Red_{\ell}}\gamma(p_{1},u) \ &\gamma(p_{2},u) \ \gamma(p_{3},u) \\
& \leq \left(\sum_{u\in\Red_{\ell}}\gamma(p_{1},u)^{3}\right)^{\frac13}\ \left(\sum_{u\in\Red_{\ell}}\gamma(p_{2},u)^{\frac32} \ \gamma(p_{3},u)^{\frac32}\right)^{\frac23} \\
&\leq \left(\sum_{u\in\Red_{\ell}}\gamma(p_{1},u)^{3}\right)^{\frac13}\ \left(\sum_{u\in\Red_{\ell}}\gamma(p_{2},u)^{3}\right)^{\frac13}\ \left(\sum_{u\in\Red_{\ell}}\gamma(p_{3},u)^{3}\right)^{\frac13}.
\end{align*}
Moreover, we have
\[
\sum_{p\in Q}\sum_{u\in\Red_{\ell}}\gamma(p_{1},u)^{3} = \vec 1^t\ \MM^{\ell}_{[3]} \ \vec 1.
\]
We now get the announced result using Lemma~\ref{lm:eigenvalue}, Lemma~\ref{fact: local automaton} and the spectral properties of $\MM^{\ell}_{[3]}$. The generalisation to cyclically reduced words follows from Lemma~\ref{lemma: conditional probability}.
\end{proof}

We now build on the previous lemmas to show that, exponentially generically, large tuples of cyclically reduced words contain pairs of words with a common prefix of a prescribed length.

\begin{proposition}\label{pro: collisions}
Let $\A$ be an irreducible Markovian automaton with coincidence probability $\coincidence$. Let $(\ell_n)_n$ be an unbounded, monotonous sequence of positive integers such that  $\ell_n \le n$ for each $n$, and let $d > \frac12$. Then an $\coincidence^{-d\ell_n}$-tuple of reduced words of length $n$ drawn randomly according to $\A$ generically contains two words with the same prefix of length $\ell_n$.

If $\A$ is non-degenerate and ergodic, the same holds for $\coincidence^{-d\ell_n}$-tuples of cyclically reduced words.
\end{proposition}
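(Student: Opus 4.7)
The approach is a second-moment (Chebyshev) argument applied to the random variable counting collisions. Write $N = \coincidence^{-d\ell_n}$ and let $\vec h = (h_1,\ldots,h_N)$ be the random tuple. For $1\le i<j\le N$, let $I_{ij}$ be the indicator that $h_i$ and $h_j$ share a common prefix of length $\ell_n$, and put $X = \sum_{i<j}I_{ij}$. The proposition claims $\PP(X=0)\to 0$, which I will deduce from Chebyshev's inequality $\PP(X=0)\le \mathrm{Var}(X)/\E[X]^2$.

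\textbf{Expectation.} Since the words are independent, $\E[X] = \binom{N}{2}\PP(I_{12}=1)$, and by Lemma~\ref{lm: prefixes} (its first statement for the reduced case, its second for the cyclically reduced case under the non-degenerate ergodic hypothesis, using $n\ge \ell_n+t+1$ which eventually holds), we get $\PP(I_{12}=1) \ge K\coincidence^{\ell_n}$ for some $K>0$. Hence
\[
\E[X] \ \ge\ \tfrac{K}{3}\, N^2\, \coincidence^{\ell_n} \ =\ \tfrac{K}{3}\,\coincidence^{(1-2d)\ell_n},
\]
which tends to $+\infty$ since $d>\tfrac12$, $0<\coincidence<1$ and $\ell_n\to\infty$.

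\textbf{Variance.} Expanding $\E[X^2] = \sum_{i<j,\,k<l}\E[I_{ij}I_{kl}]$, the pairs $(\{i,j\},\{k,l\})$ fall into three classes. Disjoint pairs are independent and contribute exactly $(\binom{N}{2}^2 - O(N^3))\E[I_{12}]^2 \le \E[X]^2$. Equal pairs contribute at most $\E[X]$. Pairs sharing exactly one index correspond to triples of distinct words sharing a common prefix of length $\ell_n$; there are $O(N^3)$ such ordered pairs, each contributing at most $K'\alpha_{[3]}^{\ell_n}$ by Lemma~\ref{lm: three prefixes}. Therefore
\[
\mathrm{Var}(X) \ \le\ \E[X] \,+\, K''\, N^3\, \alpha_{[3]}^{\ell_n}.
\]

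\textbf{Controlling $\alpha_{[3]}$ and concluding.} The decisive ingredient is the spectral inequality $\alpha_{[3]} \le \coincidence^{3/2}$, which is precisely where the threshold $d>\tfrac12$ enters. I would obtain it by combining a pointwise Cauchy-Schwarz bound, $\sum_{u\in\Red_\ell}\gamma(p,u)^3 \le (\max_u \gamma(p,u))\cdot \sum_u\gamma(p,u)^2$, with the prefix-heavy estimate $\max_u\gamma(p,u)=O(\coincidence^{\ell/2})$ from Proposition~\ref{prop: parameter if irreducible} and with $\vec 1^t\MM_{[2]}^\ell\vec 1 = O(\coincidence^\ell)$ from Lemma~\ref{lm:eigenvalue}; this yields $\vec 1^t\MM_{[3]}^\ell\vec 1 = O(\coincidence^{3\ell/2})$, hence $\alpha_{[3]}\le \coincidence^{3/2}$ by Perron-Frobenius. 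Plugging everything into Chebyshev,
\[
\frac{\mathrm{Var}(X)}{\E[X]^2} \ \le\ \frac{1}{\E[X]} \,+\, O\!\left(\frac{N^3\coincidence^{3\ell_n/2}}{N^4\coincidence^{2\ell_n}}\right) \ =\ o(1) \,+\, O\!\left(\coincidence^{(d-1/2)\ell_n}\right),
\]
which tends to $0$ since $d>\tfrac12$ and $\coincidence<1$. The main obstacle is the spectral comparison between $\MM_{[3]}$ and $\MM_{[2]}$: it is this $\coincidence^{3/2}$ bound that ties the Hölder-type ``three-word'' estimate to the ``two-word'' estimate and determines the sharp critical density $d=\tfrac12$.
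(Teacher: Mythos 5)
Your proof is correct, and it is the same second-moment method the paper uses, but your implementation is cleaner and more self-contained in three ways. First, the paper invokes a ready-made lemma from Szpankowski's work on random tries (his Lemma~3), which requires setting up an auxiliary density $d'\in(\tfrac12,d)$, a threshold $r_m\sim\tfrac{d}{d'}\ell_n$, and a ratio $\nu(r_m)$; you instead run a direct Chebyshev bound $\PP(X=0)\le \mathrm{Var}(X)/\E[X]^2$ on the collision count $X=\sum_{i<j}I_{ij}$, which reaches the same conclusion with less machinery and without the auxiliary density. Second, and more substantively, where the paper obtains $\alpha_{[3]}^{1/3}\le\coincidence^{1/2}$ by citing Karlin and Ost for the monotonicity of $m\mapsto\alpha_{[m]}^{1/m}$, you derive the needed bound $\alpha_{[3]}\le\coincidence^{3/2}$ internally: pulling out $\max_u\gamma(p,u)\le C\coincidence^{\ell/2}$ (from Proposition~\ref{prop: parameter if irreducible}) in front of $\sum_u\gamma(p,u)^2$ and using Lemma~\ref{lm:eigenvalue} on both $\MM_{[2]}$ and $\MM_{[3]}$. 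This is a genuine simplification that removes an external reference, though it is worth noting that the inequality you use is a Hölder/max-out step rather than Cauchy--Schwarz, and that irreducibility of $\MM_{[3]}$ (inherited from $\MM$) is what lets you apply the lower bound in Lemma~\ref{lm:eigenvalue} to extract $\alpha_{[3]}\le\coincidence^{3/2}$ from $\vec 1^t\MM_{[3]}^\ell\vec 1=O(\coincidence^{3\ell/2})$. Third, the paper works on right-infinite reduced words and truncates at the end, mainly to match Szpankowski's framework; since your events depend only on prefixes of length $\ell_n\le n$, you can stay with finite words throughout, which is simpler. For the cyclically reduced case, your reliance on the second parts of Lemmas~\ref{lm: prefixes} and~\ref{lm: three prefixes} is exactly what the paper does via Lemma~\ref{lemma: conditional probability}; your parenthetical caveat ``$n\ge\ell_n+t+1$ eventually holds'' should really read ``whenever $n-\ell_n\to\infty$'', which is what the paper implicitly assumes and what holds in every application, but is not literally forced by the hypothesis $\ell_n\le n$ — this minor slackness is present in the paper's own statement too, not an error you introduced.
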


\begin{proof}
We use the so-called \emph{second moment method}, as developed in~\cite{1991:Szpankowski}, and we introduce the following notation to this end.  Since the results of~\cite{1991:Szpankowski} are established for right-infinite words, we need to considered such words first; the result on words of length $n$ directly follows by truncation.
A right-infinite reduced word is an element $u$ of $\tilde{A}^{\mathbb{N}}$ such that for every $i\in\mathbb{N}$, $u_{i}\neq u_{i+1}\inv$. We define the probability distribution $\RR_{\infty}$ on right-infinite words induced by the Markovian automaton 
$\A$ by first setting $\RR_{\infty}(\Pref_{\infty}(u)) = \gamma(u)$, where 
$\Pref_{\infty}(u)$ is the set of right-infinite reduced words $w$ such that the finite reduced word $u$ is a prefix of $w$. The probability is then extended to 
the $\sigma$-algebra generated by the $\Pref_{\infty}(u)$, when $u$ ranges over
all finite reduced words (see~\cite{2009:ValleeClementFillFlajolet} for more details on this kind of constructions). 
Let $N = \coincidence^{-d\ell_n}$ and consider an $N$-tuple $\vec h = (h_i)_{1\le i\le N}$ of right-infinite reduced words, independently and randomly generated according to $\A$.

For $1\leq i < j \leq N$, let $X_{i,j}$ be the random variable computing the length of the longest common prefix of $h_i$ and $h_j$. We want to show that, exponentially generically,
$$\max_{1\leq i < j \leq N}X_{i,j} \geq \ell_n.$$
Let us relabel the random variables $X_{i,j}$ ($i\ne j$) as $Y_1,\ldots, Y_m$, with $m=\binom{N}2$ and, say, $Y_1 = X_{1,2}$. We are therefore computing the maximum of $m$ random variables, which are identically distributed but not independent. Fortunately, they behave almost
as if they were independent, as we will see. 

Let $d'$ be such that $\frac12 < d' < d$ and for each $m\ge 1$, let
$$r_{m} = \log_{\coincidence^{-2d'}}(m) = \frac{\log\binom{N}{2}}{\log \coincidence^{-2d'}} \sim \frac{\log\coincidence^{-2d\ell_n}}{\log \coincidence^{-2d'}} = \frac{d\ell_n}{d'}.$$
In particular, $r_m$ is asymptotically greater than $\ell_n$, and we only need to show that
\begin{equation}\label{eq:lim P(Y)=1}
\lim_{n\rightarrow\infty} \RR_\infty\left(\max_{k\in[m]}Y_{k}\geq r_{m}\right) = 1.
\end{equation}
Let $\nu(r_{m})$ denote the quantity
$$\nu(r_{m}) = \sum_{k=2}^{m}\frac{\RR_\infty(Y_{1}\geq r_{m},Y_{k}\geq r_{m})}{m\,\RR_\infty(Y_{1}\geq r_{m})^{2}}.$$
We use Lemma~3 in \cite{1991:Szpankowski}, which states that the desired equation~\eqref{eq:lim P(Y)=1} holds if
$$\lim_{n\rightarrow\infty}m\,\RR_\infty(Y_{1} > r_{m}) = +\infty\textrm{ and }\lim_{n\rightarrow\infty}\nu(r_{m}) = 1.$$
We now proceed with the proof of these two equalities.
By Lemma~\ref{lm: prefixes}, we have $\RR_\infty(Y_{1}\geq r_{m})\geq K\,\coincidence^{r_{m}}$. Then
\begin{align*}
\log\left(m\RR_\infty(Y_{1}\geq r_{m})\right) & \geq \log m + \log K + r_{m}\log\coincidence \\
& = r_{m}\log(\coincidence^{-2d'})+ \log K + r_{m}\log\coincidence \\
& = r_{m}\log(\coincidence^{1-2d'}) + \log K,
\end{align*}
which tends to $+\infty$, since $1-2d'<0$ and $\coincidence<1$. Therefore,
$$\lim_{n\rightarrow\infty}m\,\RR_\infty(Y_{1}\geq r_{m}) = +\infty.$$

Let us now consider $\nu(r_{m})$. Note that, if the $Y_i$ were independent random variables, we would have $\nu(r_m) = \frac{m-1}m$, which tends to 1 when $n$ tends to $\infty$.

Observe that if $2<i<j\leq N$, then $X_{1,2}$ and $X_{i,j}$ are independant and identically distributed, so
$$\RR_\infty(X_{1,2}\geq r_{m}, X_{i,j}\geq r_{m}) =  \RR_\infty(X_{1,2}\geq r_{m})\,\RR_\infty(X_{i,j}\geq r_{m})  = \RR_\infty(Y_{1}\geq r_{m})^{2}.$$
Also, since $h_1$ and $h_2$ are drawn independently, we have $\RR_\infty(X_{1,2}\geq r_{m},X_{1,k}\geq r_{m}) = \RR_\infty(X_{1,2}\geq r_{m},X_{2,k}\geq r_{m})$ for each $k\geq 3$. Therefore
$$\nu(r_{m}) = 2\ \sum_{k=3}^{N}\frac{\RR_\infty(X_{1,2}\geq r_{m},X_{1,k}\geq r_{m})}{m\,\RR_\infty(Y_{1}\geq r_{m})^{2}} + \binom{N-2}{2}\frac1m.$$
Since $m=\binom{N}2$, we have $\lim_n\binom{N-2}{2}\frac1m =1$. Moreover, the joint probability $\RR_\infty(X_{1,2}\geq r_{m},X_{1,k}\geq r_{m})$ is exactly the probability that three random reduced words share a common prefix of length
$r_{m}$: by Lemma~\ref{lm: three prefixes}, this is at most equal to $K\,\alpha_{[3]}^{r_{m}}$ for some constant $K > 0$. Together with Lemma~\ref{lm: prefixes}, this yields
$$\sum_{k=3}^{N}\frac{\RR_\infty(X_{1,2}\geq r_{m},X_{1,k}\geq r_{m})}{m\,\RR_\infty(Y_{1}\geq r_{m})^{2}} \leq \frac{K'}{N}\left(\frac{\alpha_{[3]}}{\coincidence^{2}}\right)^{r_{m}},$$
for some $K' > 0$. In~\cite{1987:KarlinOst} it is proved that $(\alpha_{[m]})^{1/m}$ is a decreasing sequence, so we have $\alpha_{[3]}^{1/3}\leq \coincidence^{1/2}$ and hence
$$\left(\frac{\alpha_{[3]}}{\coincidence^{2}}\right)^{r_{m}} \leq \left(\frac{\coincidence^{3/2}}{\coincidence^{2}}\right)^{r_{m}} \leq \coincidence^{-\frac{r_{m}}2}.$$
Therefore
$$\log\left(\frac1N\left(\frac{\alpha_{[3]}}{\coincidence^{2}}\right)^{r_{m}}\right) = -\log N - \frac{r_{m}}2 \log \coincidence \leq -\frac12 \log m + K'' -  \frac{r_{m}}2 \log \coincidence$$
for some constant $K''$. By definition of $r_m$, we have $\log m = -2d' r_m \log\coincidence$ and it follows that
$$\log\left(\frac1N\left(\frac{\alpha_{[3]}}{\coincidence^{2}}\right)^{r_{m}}\right) \leq \frac{r_{m}}2 (2d'-1)\log \coincidence + K''.$$
This quantity tends to $-\infty$ when $n$ tends to $\infty$ since $2d' - 1 > 0$ and $\coincidence<1$.
This proves finally that $\lim_{m\rightarrow\infty}\nu(r_{m})=1$ and establishes Equation~\eqref{eq:lim P(Y)=1}. That is, the desired statement is proved for tuples of infinite reduced words. As $\ell_{n}\leq n$, considering right-infinite words and truncating then at their prefix of length $n$ yields the same result. By construction, the probability distribution induced on this truncated words is exactly $\RR_{n}$, concluding the proof.

The generalisation to cyclically reduced words follows from Lemma~\ref{lemma: conditional probability}.
\end{proof}

We now use Proposition~\ref{pro: collisions} to prove Proposition~\ref{prop: no small cancellation}.

\medskip\noindent\textsc{Proof of Proposition~\ref{prop: no small cancellation}}\enspace
Let $0 < \lambda < \frac12$. Proposition~\ref{pro: collisions}, applied to $\ell_n = \lambda n$ shows that, if $\frac12 < d < 1$, then a random $\coincidence^{-d\lambda n}$-tuple $\vec h$ of cyclically reduced words of length $n$, generically has two components $h_i$ and $h_j$ with the same prefix of length $\lambda n$, which is sufficient to show that $\vec h$ does not satisfy Property $C'(\lambda)$.
\cqfd

We now translate the result of Proposition~\ref{pro: collisions} into a result on the group presented by a random $\coincidence^{-dn}$-tuple, when $d > \frac12$.
We will use repeatedly Chernoff bounds \cite[Th. 4.2 p.70]{1995:MotwaniRaghavan}, which state that, in a binomial distribution with parameters $(k,p)$ --- that is: $X_k$ is the sum of $k$ independent draws of 0 or 1 and $p$ is the probability of drawing 1 ---, 
$$\proba{X_k \leq \frac{kp}{2}} \leq \ \exp\left(-\frac{kp}8\right).$$
In other words, 
\begin{equation}\label{eq:uspensky}
\proba{X_k \ge \frac{kp}2}\geq 1 - \exp\left(-\frac{kp}8\right).
\end{equation}

If $\vec h$ is a vector of cyclically reduced words, $G$ is the group presented by $G = \langle A \mid \vec h\rangle$ and $u,v$ are reduced words, we write that $u =_G v$ if $u$ and $v$ have the same projection in $G$ (that is: if $uv\inv$ lies in the normal closure of $\vec h$).

\begin{proposition}\label{prop: getting degenerate subgroups}
Let $\calA$ be an ergodic Markovian automaton with coincidence probability $\coincidence$ and let $a,b\in \tilde A$ be labels of transitions in $\calA$. Let $d > \frac12$ and $N \ge \coincidence^{-dn}$, and let $G$ be a group presented by an $N$-tuple of cyclically reduced words chosen at random according to $\calA$. Then $a =_G b$ exponentially generically.
\end{proposition}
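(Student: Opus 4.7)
The plan is to locate indices $i \neq j$ in $[N]$ and cyclic conjugates $a u_i$ of $h_i$ and $b v_j$ of $h_j$ such that $u_i = v_j$ as elements of $\Red_{n-1}$. Since $h_i$ and $h_j$ are cyclically reduced relators, every cyclic conjugate is trivial in $G$, so $a u_i =_G 1$ and $b v_j =_G 1$, whence $a =_G u_i^{-1} =_G v_j^{-1} =_G b$. The whole difficulty is thus to show that, exponentially generically, such a matching pair of rotations exists.

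To produce candidate rotations, I would fix a large constant $k_0$ (larger than the mixing time of $\calA$), so that ergodicity together with Proposition~\ref{Markovian, liminf C} guarantees positive constants $p_a, p_b > 0$ for which, for all large $n$, a word drawn according to $\calA$ conditioned on being cyclically reduced has its letter at position $k_0$ equal to $a$ (resp. $b$) with probability at least $p_a$ (resp. $p_b$); positivity of $p_a$ and $p_b$ uses that $\tilde\gamma(q) > 0$ for all $q$ by ergodicity and that $a, b$ label transitions in $\calA$. Since the $h_1, \ldots, h_N$ are independent, the Chernoff bound~\eqref{eq:uspensky} yields that, outside an event of probability $\exp(-\Omega(N))$, the sets
\[ S_a = \{ i \le N : (h_i)_{k_0} = a\}, \qquad S_b = \{ i \le N : (h_i)_{k_0} = b\}\]
have cardinalities $|S_a| \ge \tfrac12 p_a N$ and $|S_b| \ge \tfrac12 p_b N$. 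For each $i \in S_a$, I would set $u_i \in \Red_{n-1}$ to be the cyclic conjugate of $h_i$ starting at position $k_0+1$ (equivalently, the rotation of $h_i$ beginning with the letter $a$ at position $k_0$, with that leading $a$ removed), and define $v_j$ analogously for $j \in S_b$.

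The core step is a second-moment argument mirroring the proof of Proposition~\ref{pro: collisions}, applied to the bipartite collision count $Z = \sum_{i \in S_a,\ j \in S_b} \mathbf{1}_{u_i = v_j}$. Adapting Lemma~\ref{lm: prefixes} through the spectral analysis of $\MM_{[2]}$ via Lemma~\ref{lm:eigenvalue}, one expects a lower bound $\Pr(u_i = v_j) \ge K \coincidence^{n-1}$, yielding
\[ \mathbb{E}[Z] \ge K' N^2 \coincidence^{n-1} = K' \coincidence^{(1-2d)n - 1},\]
which grows exponentially since $d > \tfrac12$. The only correlated contributions to $\mathbb{E}[Z^2]$ come from pairs sharing an index in $S_a$ or $S_b$, and they involve three-way collisions bounded above by $K'' \alpha_{[3]}^{n-1}$ in the spirit of Lemma~\ref{lm: three prefixes}. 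Invoking the Karlin--Ost inequality $\alpha_{[3]}^{1/3} \le \coincidence^{1/2}$, their contribution to $\mathbb{E}[Z^2]/\mathbb{E}[Z]^{2}$ is of order $1/(N \coincidence^{n/2}) = \coincidence^{(d-1/2)n}$, which decays exponentially. Chebyshev's inequality then gives $\Pr(Z > 0) \to 1$ exponentially fast, yielding the desired pair $(i,j)$ and hence $a =_G b$.

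The main obstacle will be rigorously justifying the lower bound on the bipartite collision probability $\Pr(u_i = v_j)$ and the upper bound on the three-way collision probabilities in this rotated, conditioned setting: the words $u_i$ and $v_j$ are not samples of a standard Markovian walk but rotations of cyclically reduced words conditioned to carry a specified letter at position $k_0$. For fixed $k_0$, however, ergodicity forces the conditional joint laws of the state just before and just after position $k_0$ to be, up to boundary corrections of order $O(c^n)$ with $c<1$, induced by a reweighted version of $\calA$ whose squared and cubed transition matrices still have spectral radii $\coincidence$ and $\alpha_{[3]}$, so the required bounds should follow from the same Perron--Frobenius and Cauchy--Schwarz manipulations as in the proofs of Lemmas~\ref{lm: prefixes} and~\ref{lm: three prefixes}.
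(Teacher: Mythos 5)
Your group-theoretic reduction is correct: if $au_i$ and $bv_j$ are cyclic conjugates of relators in $\vec h$ with $u_i=v_j$, then indeed $a=_G u_i^{-1}=_G v_j^{-1}=_G b$. And the second-moment scaffold mirrors Proposition~\ref{pro: collisions} closely. However, the scheme of matching full rotations of $h_i$ and $h_j$ conditioned on the letter at a fixed position $k_0$ has a genuine obstruction that you flag but do not resolve, and it is in fact fatal in general: the event $u_i=v_j$ can have probability zero. The issue is not merely one of estimating collision probabilities for a ``reweighted'' Markov source; the supports of the laws of $u_i$ and $v_j$ can be disjoint. If $a$ is read only from state $p$ going to $p\cdot a$, and $b$ only from $p$ going to $p\cdot b$, and the sets of labels readable out of $p\cdot a$ and $p\cdot b$ are disjoint, then $z_{k_0+1}\neq z'_{k_0+1}$ always, so $u_i\neq v_j$. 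A concrete ergodic example: $Q=\{1,2\}$ over $A=\{a,b,c,d\}$ with transitions $1\xrightarrow{a}2$, $1\xrightarrow{b}1$, $2\xrightarrow{c}1$, $2\xrightarrow{d}2$ (probabilities $\tfrac12$ on each, $\gamma_0$ uniform). Here $a,b$ are only readable from state~$1$; after $a$ one must read $c$ or $d$, after $b$ one must read $a$ or $b$, so a rotation of a word with $a$ at position $k_0$ and a rotation of a word with $b$ at position $k_0$ never agree at position $k_0+1$, for any choice of $k_0$. Your $\mathbb{E}[Z]$ is then $0$, not $K'\coincidence^{(1-2d)n-1}$, and the argument collapses.

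The paper's proof avoids this entirely by never rotating. Its first step uses Proposition~\ref{pro: collisions} to find, among words started from the same initial state, a pair sharing a common prefix of length $n-t-s$; the remaining length $t+s$ is split into a bridge word $v$ (length $t$, chosen so all entries of $\MM^t$ are positive, reaching a prescribed state $q$) and a free tail of length $s$. Since the tails are drawn fresh from $q$, any two words $x,y$ of length $s$ readable from $q$ occur as a pair $(wx,wy)$ in $\vec h$ with positive probability, which (by Chernoff) happens generically; this gives $x=_G y$ and, iterating over prefixes, $x_k=_G y_k$ for each $k$. Its second step then manufactures equal-length words $x,y$, both readable from a single state $q_0$ and ending respectively in $a$ and $b$, using strong connectedness to route through both transitions and ergodicity plus the Chinese Remainder Theorem to equalize lengths. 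This construction is exactly the ingredient that your approach lacks: it guarantees compatibility between the $a$- and $b$-transitions across the automaton's state geometry, which a fixed-position rotation cannot do. To salvage your approach you would essentially have to replace the single-letter condition at $k_0$ by a pair of carefully chosen constant-length windows (one through the $a$-transition, one through the $b$-transition) that reconcile the before- and after-states, at which point you have rediscovered the paper's bridge-word and CRT construction.
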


\begin{proof}
Let $t>0$ be such that all the coefficients of $\MM^t$ are positive (such an integer exists since $\MM$ is ergodic) and let $\tau > 0$ be the minimum coefficient of $\MM^t$. 

We proceed in two steps. First we consider transitions starting in the same state of the Markovian automaton and second we generalize the study to transitions beginning in different states  of the automaton. 

\noindent\textit{First step of the proof.}\enspace
We show that if $x = x_1\cdots x_s$ and $y = y_1\cdots y_s$ are reduced words of equal length $s\ge 1$ which label paths in $\calA$ out of the same state $q$, then exponentially generically, we have $x_k =_G y_k$ for each $1\le k\le s$.

Recall that, in our model of Markovian automata, drawing a word of length $n$ amounts to drawing a state $r\in Q$ according to $\gamma_0$, and then drawing a word of length $n$ according to $\gamma(r,-)$. Thus, when drawing a tuple $\vec h = (h_i)_i$, we also draw a tuple $\vec q = (q_i)_i$ of states such that, in particular, $\gamma_0(q_i) > 0$ and $\gamma(q_i,h_i) > 0$.

Let $r$ be a state such that $\gamma_0(r) > 0$. Let $T_0 =\{h_i \in \vec h \textrm{ such that } q_i=r\}$ and $N_0=|T_0|$.
Observe that drawing randomly and independently $N$ words of length $n$ in our model and then keeping only those starting in state $r$ to obtain $T_0$ is the same as first choosing $N_0$ according to a binomial law of parameters $(\gamma_0(r), N)$ and then drawing randomly and independently $N_0$ words beginning in state $r$. 
Moreover Chernoff bounds (Equation~(\ref{eq:uspensky}) above, applied with $p = \gamma_0(r)$ and $k = N$) show that $\mathbb{P}\left(N_0 \geq \frac{\gamma_0(r) N}2 \right) \geq \gothicp_0$ with $\gothicp_0=1 - \exp\left(-\frac{\gamma_0(r) N}8\right)$.

For each $s \ge 1$, we say that a pair of indices $(i,j)$ is an \emph{$s$-collision in $T_0$} if $h_i$ and $h_j$ belong to $T_0$ and have the same prefix of length $n-t-s$. Let $e$ be such that $0 < e < d-\frac12$ and let $N' = \coincidence^{-(d-e)n}$. Then a random $N_0$-tuple of cyclically reduced words starting in $r$ is obtained by drawing  $\frac{N_0}{N'}$ times a random $N'$-tuple starting in state $r$. 
Moreover choosing a random word in a Markovian automaton given that the associated path begins in state $r$ is the same as taking for initial probability vector $\gamma_0$ the probability vector such that $\gamma_0(r)=1$. 
Since the conclusion of Proposition~\ref{pro: collisions} does not depend on the initial probability vector and $d-e > \frac12$,  Proposition~\ref{pro: collisions}  applied to $\ell_n = n-t-s$ shows that a random $N'$-tuple of cyclically reduced words that starts in $r$ generically exhibits at least one $s$-collision in $T_0$.

We assume that $n$ is large enough so that the probability of an $s$-collision in $T_0$ of a random $N'$-tuple is at least $\frac12$. Then Chernoff bounds (Equation~(\ref{eq:uspensky}), applied with $p = \frac12$ and $k = N_0$) show that the set $T_1$ of $s$-collisions in $T_0$ of a random $\coincidence^{-dn}$-tuple of cyclically reduced words of length $n$ satisfies $|T_1| \ge \frac14 N_0$ with probability greater than or equal to $\gothicp_1 = 1 - \exp(-\frac{N_0}{16})$. 

For each $s$-collision $(i,j) \in T_1$, we let $u(i,j)$ be the common length $n-t-s$ prefix of $h_i$ and $h_j$.
Then by a finiteness argument, there exists a state $q_1 \in Q$ and a set $T_2\subset T_1$ such that, for every $(i,j)\in T_2$, $u(i,j)$ labels a path from $r$ to $q_1$ in $\calA$, and $|T_2| \ge \frac{|T_1|}{|Q|}$. Hence $|T_2| \ge \frac{N_0}{4|Q|}$ with probability greater than or equal to $\gothicp_1$.

Now let $v$ be a reduced word of length $t$, labeling a path in $\calA$ from $q_1$ to $q$: such a word exists since all the coefficients of $\MM^t$ are positive, and we have $\gamma(q_1,v) \ge \tau$. For each $(i,j) \in T_2$, the probability that $h_i$ starts with $u(i,j)v$ is $\gamma(q_1,v) \ge \tau$, and the probability that $uv$ is a prefix of both $h_i$ and $h_j$ is at least $\tau^2$. We can apply Chernoff bounds (\ref{eq:uspensky}) again, with $p = \tau^2$ and $k = |T_2|$: then the subset $T_3 \subseteq T_2$ of pairs $(i,j)$ such that $u(i,j)v$ is a prefix of both $h_i$ and $h_j$, has cardinality $|T_3| \ge \frac12 |T_2| \tau^2$ with probability at least $\gothicp_2 = 1 - \exp(-\frac{\tau^2|T_2|}8)$.

Finally, we note that $|u(i,j)v| = n-s$, so for each $(i,j) \in T_3$, we have $h_i = u(i,j)vx$ with probability $\gamma(q,x)$. Therefore the probability that $(h_i,h_j) = (u(i,j)vx, u(i,j)vy)$ is $\gamma(q,x)\gamma(q,y)$, which is positive by hypothesis. Applying Chernoff bounds one more time (with $k = |T_3|$ and $p = \gamma(q,x)\gamma(q,y)$) shows that $\vec h$ contains a pair of words of the form $(wx,wy)$ with probability at least
$\gothicp_3$ with $\gothicp_3 =\left(1 -  \exp\left(-\frac{|T_3|\gamma(q,x)\gamma(q,y)}8\right)\right)$.

In conclusion, exponentially generically $N_0\geq \gamma_0(r) \frac{N}2$ which implies that $\gothicp_1$ is exponentially close to $1$. Hence $T_2\geq \frac{\gamma_0(r)N}{8|Q|}$ exponentially generically, which implies that $\gothicp_2$ is exponentially close to $1$. So  $|T_3| \ge \frac{\gamma_0(r)N  \tau^2}{16|Q|}$  exponentially generically,  which implies that $\gothicp_3$ is exponentially close to $1$. In particular, exponentially generically, $\vec h$ has a pair of the form $(wx,wy)$, and hence we have $x =_G y$.

Applying this to the words $x_1$ and $y_1$, we find that $x_1 =_G y_1$. Next, considering the words $x_1x_2$ and $y_1y_2$, we find that $x_1x_2 =_G y_1y_2$, and hence $x_2 =_G y_2$. Iterating this reasoning, we finally show that $
x_k =_G y_k$ for each $1 \le k \le s$.

\smallskip\noindent\textit{Second step of the proof}\enspace
We now consider two transitions in $\calA$, one labeled $a$ from state $q$ to state $q'$ and another labeled $b$ from state $r$ to state $r'$ ($a,b\in \tilde A$).

Let $q_0\in Q$ be a state in $\calA$ such that $\gamma_0(q_0) > 0$. Since $\calA$ is irreducible, there exists a word $w_1$ which labels a loop at $q_0$ and visits every transition of $\calA$. Moreover, since $\calA$ is ergodic, there exists a word $w_2$ labeling another loop at $q_0$, such that $|w_1|$ and $|w_2|$ are relatively prime.

Since reading $w_1$ from $q_0$ visit all the transitions, let $u_1$ (resp. $v_1$) be a prefix of $w_1$ such that the last transition read after reading $u_1$ (resp. $v_1$) is the $a$-transition out of state $q$ (resp. the $b$-transition out of state $r$).	 Then the Chinese remainder theorem shows that there exist words $x\in \{w_1,w_2\}^*u_1$ and $y\in \{w_1,w_2\}^*v_1$ of equal length.

Since $a$ and $b$ are the last letters of $x$ and $y$, respectively, the first step of the proof shows that $a =_G b$, which concludes the proof of the proposition.
\eop

\noindent We can now complete the {\bf proof of Proposition~\ref{prop: degenerate subgroups}.}
By Proposition~\ref{prop: getting degenerate subgroups}, exponentially generically, all the letters in $E$ are equal in $G$. If $a, a\inv \in E$ for some letter $a$, then all these letters are equal to their own inverse in $G$, so the subgroup $H$ of $G$ generated by $E$ is a quotient of $\ZZ/2\ZZ$. Since all the relators in the presentation have length $n$, it follows that $H$ is isomorphic to $\ZZ/2\ZZ$ if $n$ is even, and is trivial if $n$ is odd. The result follows once we observe that the letters in $D$ do not occur in any relator.
\cqfd

\subsubsection*{Acknowledgments}
The authors are thankful to the anonymous referee for her/ his remarkably thorough reading of the first version of this paper and for his/her insightful and constructive suggestions. These helped simplify the presentation of Sections~\ref{sec: repeated factors} and~\ref{sec: repeated cyclic factors}, sharpen some results in Section~\ref{sec: applications to uniform} and fix a technical mistake in the proof of Proposition~\ref{prop: degenerate subgroups}.

\bibliographystyle{abbrv}

\end{document}